\documentclass[11pt,a4paper,reqno]{amsart}
\setlength{\textheight}{22.5cm} \setlength{\textwidth}{6.7in}
\setlength{\topmargin}{0pt} 
\setlength{\evensidemargin}{1pt}
\setlength{\oddsidemargin}{1pt} \setlength{\headsep}{10pt}
\setlength{\parskip}{1mm} \setlength{\parindent}{4mm}

\bibliographystyle{apalike}
\usepackage{graphicx}
\usepackage[linktocpage=true,colorlinks,citecolor=blue,linkcolor=blue,urlcolor=blue]{hyperref}
\usepackage{amssymb}
\usepackage{cite}
\usepackage{amsmath}
\usepackage{latexsym}
\usepackage{amscd}
\usepackage{tikz}
\usepackage{amsthm}
\usepackage{thm-restate}
\usepackage{mathrsfs}
\usepackage{url}
\usepackage[utf8]{inputenc}
\usepackage[english]{babel}
\usepackage{amsfonts}
\usepackage[textwidth=20mm]{todonotes}

% ----------------------------------------------------------------
\vfuzz2pt % Don't report over-full v-boxes if over-edge is small
\hfuzz2pt % Don't report over-full h-boxes if over-edge is small
% THEOREMS -------------------------------------------------------

\numberwithin{equation}{section}
\setcounter{section}{0}

% MATH -----------------------------------------------------------

\def\R{\mathbb R}
\def\Z{\mathbb Z}
\def\Q{\mathbb Q}
\def\C{\mathbb C}

\def\N{\mathbb N}

\def\CA{\mathcal A}
\def\CC{\mathcal C}

\def\va{\mathbf{a}}
\def\vb{\mathbf{b}}
\def\vx{\mathbf{x}}
\def\vr{\mathbf{r}}
\def\vy{\mathbf{y}}
\def\vN{\mathbf{N}}
\def\vzero{\mathbf{0}}
\def\gcd{\operatorname{gcd}}

\DeclareMathOperator\disc{disc}
\DeclareMathOperator\AP{AP}

\newtheorem{theorem}{Theorem}[section]
\newtheorem{lemma}[theorem]{Lemma}
\newtheorem{proposition}[theorem]{Proposition}
\newtheorem*{theorem*}{Theorem}

\newtheorem{corollary}[theorem]{Corollary}
\newtheorem{conjecture}[theorem]{Conjecture}

\newtheorem{claim}{Claim}

\theoremstyle{remark}
\newtheorem*{remark}{Remark}
\newtheorem*{notations}{Notations}

\theoremstyle{definition}

\theoremstyle{remark}

\numberwithin{equation}{section}

\begin{document}
	\title{Discrepancy of arithmetic progressions in grids}
		\author{Jacob Fox \and Max Wenqiang Xu \and Yunkun Zhou}
	\thanks{Fox is supported by a Packard Fellowship and by NSF award DMS-1855635. \\
		\indent Xu is supported by the Cuthbert C. Hurd Graduate Fellowship in the Mathematical Sciences, Stanford. \\
		\indent Zhou is supported by NSF GRFP Grant DGE-1656518.}
	\address{Department of Mathematics, Stanford University, Stanford, CA, USA}
	\email{\{jacobfox,maxxu,yunkunzhou\}@stanford.edu}

	\maketitle
	
		\begin{abstract}
%A celebrated result of Roth and of Matou\v{s}ek and Spencer states that the discrepancy of arithmetic progressions in the first $N$ positive integers is within a constant factor of $N^{1/4}$. 
% 	About two decades ago, Valk\'o studied the extension to higher dimensions, proving that the discrepancy of arithmetic progressions in the $d$-dimensional grid $\{1, \dots, N\}^d$ is at least  $c_d N^{\frac{d}{2d+2}}$ and at most $C_d N^{\frac{d}{2d+2}} (\log N)^{\frac{5}{2}}$ for some constants $c_d, C_d$ depending on $d$. In this paper, we improve the upper bound by removing the polylogarithmic factor and thus determine the discrepancy in grids up to a constant factor depending on $d$.
	We prove that the the discrepancy of arithmetic progressions in the $d$-dimensional grid $\{1, \dots, N\}^d$ is within a constant factor depending only on $d$ of $N^{\frac{d}{2d+2}}$. This extends the case $d=1$, which is a celebrated result of Roth and of Matou\v{s}ek and Spencer, and removes the polylogarithmic factor from the previous upper bound of Valk\'o from about two decades ago. We further prove similarly tight bounds for grids of differing side lengths in many cases.  

	\end{abstract}
	\section{Introduction}
	
% The study of discrepancy in arithmetic progressions has a long history. 
	Given a finite set $\Omega$, a coloring of $\Omega$ is a map $\chi: \Omega\to \{1, -1\}$, and  $\chi(A) = \sum_{x\in A}\chi(x).$ For a family $\mathcal{A}$ of subsets of $\Omega$, the discrepancy of $\mathcal{A}$ is defined to be  
\[\disc(\mathcal{A}): = \min_{\chi} \max_{A \in \mathcal{A}} |\chi(A)|,\]
where the minimum is over all colorings of $\Omega$. Let $\mathcal{A}_1$ be the family of arithmetic progressions contained in $[N]:=\{1,\ldots,N\}$. 
	Roth  \cite{Roth} showed using Fourier analysis that there is an absolute constant $c > 0$ such that
	\[\disc(\mathcal{A}_1) \geq cN^{\frac 14}.  \]
In the other direction, Beck \cite{Beck} proved that 
 \[\disc(\mathcal{A}_1) \leq CN^{\frac14}(\log N)^{\frac 52}\]
 for some absolute constant $C$, thereby showing that Roth's lower bound is sharp up to a polylogarithmic factor. Finally, Matou\v{s}ek and Spencer \cite{MS} removed the polylogarithmic factor and resolved this problem of determining the discrepancy up to a constant factor. 
 
It is natural to study the generalization of this problem to higher dimensions. An {\it arithmetic progression in $d$ dimensions} is a set of the form
	\[\AP_d(\va, \vb, l) := \{\va + i \vb: i= 0,1,\dots, l-1\}\]
	where $\va,\vb \in \Z^d$ with $\vb \not = \vzero$, and $l\in \N$. Here $\vb$ is the common difference of the arithmetic progression. Let $\mathcal{A}_d$ be the set of arithmetic progressions in $d$ dimensions that are subsets of $[N]^d$.
% 	\[\mathcal{A}_d := \{\AP_d(\va, \vb, l): \va \in \Z^d, \vb \in \Z^d\setminus \{\vzero\}, l\in \N, \AP_d(\va, \vb, l)\subseteq [N]^d\}.\]
	The quantity we are interested in is 
	\[\disc(\mathcal{A}_d): = \min_{\chi: [N]^d \to \{1,-1\} } \max_{A \in \mathcal{A}_d} |\chi(A)|,  \]
	where $\chi(A) = \sum_{x\in A}\chi(x)$. Valk\'o \cite{Valko} proved that there exist constants $c = c(d), C=C(d)$ such that
	\[ cN^{\frac{d}{2d+2}} \leq \text{disc}(\mathcal{A}_d) \leq CN^{\frac{d}{2d+2}}(\log N)^{\frac{5}{2}}. \]

	Valk\'o's proof of the lower bound extends Roth's proof, while the upper bound adapts Beck's proof. The problem of estimating the discrepancy of higher dimensional 
	arithmetic progressions is further discussed in \cite{DSW}.
	In this paper, we remove the polylogarithmic factor in the upper bound and thus determine the quantity up to a constant factor dependent on $d$. 
	
	\begin{theorem}\label{thm:main}For all positive integers $N$ and $d$, 
		we have 
		\[\disc(\mathcal{A}_d) =  \Theta_d(N^{\frac{d}{2d+2}}). \]
	\end{theorem}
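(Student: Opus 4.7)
The lower bound $\disc(\mathcal{A}_d) \gtrsim_d N^{d/(2d+2)}$ is due to Valk\'o \cite{Valko}, so my focus is on the matching upper bound. The plan is to lift the Matou\v{s}ek--Spencer partial-coloring argument \cite{MS} for $d=1$ to arbitrary dimension, eliminating the $(\log N)^{5/2}$ loss in Valk\'o's Beck-style union bound. The tool is an iterated weighted partial coloring via the Beck--Spencer entropy lemma: if $\{w_F\}_{F \in \mathcal{F}}$ are weights on a set system over an $n$-point set satisfying $\sum_F \exp(-w_F^2/(5|F|)) \le n/10$, then there is a partial $\pm 1$ coloring on at least $n/2$ points with $|\chi(F)| \le w_F$ for every $F$. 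Applied to a nested sequence $[N]^d = U_0 \supset U_1 \supset \cdots$ with $|U_k| \le 2^{-k}N^d$, and superposing the partial colorings $\chi_k$, one obtains a full $\pm 1$ coloring of $[N]^d$ with AP-discrepancy bounded by $\sum_k w_A^{(k)}$.

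The weights are indexed by the \emph{scale} of each AP: the dyadic class $L$ of the length and dyadic classes $(B_1,\ldots,B_d)$ of the coordinates of the common difference $\vb$. The number $M(L, B_1, \ldots, B_d)$ of APs of a given scale contained in $[N]^d$ is counted by a direct lattice argument --- roughly $N^d\prod_i B_i$ when the AP fits in the box, with a sharp cutoff otherwise. I would then choose $w_A^{(k)}$ as a function of $(L, B_1, \ldots, B_d)$ and $k$, so that the per-round entropy condition holds and $\sum_k w_A^{(k)}$ telescopes to $O(w_A^{(0)})$ (e.g.\ by including a geometric factor $2^{-\beta k}$ with small $\beta>0$). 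Optimizing $w_A^{(0)}$ over scales is a one-dimensional problem in the product $P := L \cdot \prod_i B_i$: I expect the critical point $P \asymp N^{d/(d+1)}$ to correspond to APs contributing discrepancy $\Theta_d(N^{d/(2d+2)})$, matching the lower bound.

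The main obstacle, as in \cite{MS}, is avoiding $\log$-factor accumulation when summing over the $O(\log^{d+1} N)$ dyadic scales and rounds: a naive Beck weighting makes each scale saturate the optimum, so the sum over scales multiplies by a polylog. The Matou\v{s}ek--Spencer remedy is to inflate weights slightly above the pointwise Beck threshold, forcing entropy contributions to decay geometrically away from the critical scale and thereby collapsing the scale-sum to $O(1)$. Implementing this in $d$ dimensions requires the sharp lattice counts $M(L,B_1,\ldots,B_d)$ above (this is where the multidimensional geometry really enters), plus two bookkeeping points: (i) $A \cap U_{k-1}$ need not itself be an AP, which I would handle by applying the lemma to the original AP $A$ and using $|A|$ as an upper bound on $|A \cap U_{k-1}|$; and (ii) the $U_k$ are determined adversarially by earlier rounds, but since the entropy condition is monotone in the ground set size, the weight scheme can be fixed in advance. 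Pushing the scale optimization in paragraph two through to yield exactly $N^{d/(2d+2)}$ with no residual $\log$ factor is the technical crux.
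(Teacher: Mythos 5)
Your high-level plan --- iterate the Beck--Spencer entropy partial coloring over a nested sequence, with a weight schedule tuned so the discrepancy sum telescopes --- is indeed the strategy of the paper (Lemma~\ref{lem:partial-coloring-lemma}, Proposition~\ref{prop:partial-coloring-almost-cube}, Corollary~\ref{cor: almost cube partial color}). The decomposition by dyadic scale of length and common difference is also the right framework; the paper realizes it as the family $\mathcal{C}_X$ of ``canonical'' APs (Section~\ref{sec:decomposition}).

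However, there is a genuine gap in the way you propose to kill the $\log N$ factor over rounds. The count you describe --- ``roughly $N^d\prod_i B_i$ when the AP fits in the box'' --- is essentially the naive bound in Corollary~\ref{cor:counting-subsets-1}: $f(s,X) \leq 5^d N^d |X|/s^{d+1}$. Crucially this bound is \emph{proportional to $|X|$}. Since the entropy threshold in Lemma~\ref{lem:partial-coloring-lemma} is also proportional to $|X|$ (it is $n/5$ with $n=|X|$), the ratio $f(s,X)/|X|$ does not improve when the uncolored set shrinks. That means the minimum achievable weight at the critical scale $s^* \asymp N^{d/(d+1)}$ is the \emph{same} in every round, not geometrically decaying, and iterating over $\Theta(\log N)$ rounds gives only $O_d(N^{d/(2d+2)}\log N)$ --- exactly the bound the paper obtains from Proposition~\ref{prop:first-partial-coloring} alone. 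Your proposal to build in a geometric factor $2^{-\beta k}$ in the weight schedule cannot be sustained by the entropy inequality unless the per-round count itself improves as the density $\rho=|X|/N^d$ drops.

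The missing ingredient is the improved counting bound, Lemma~\ref{lem:counting-subsets-3}, which gives an extra saving factor $\rho^{c_d\delta}$ over the naive bound in the critical range of $s$. This is precisely what lets the per-round discrepancy decay geometrically across rounds and collapses the round-sum to $O(1)$. It is the real multidimensional difficulty and is by far the longest part of the paper (Section~\ref{sec:geometry}): it is proved by induction on $d$, with the base case being Matou\v sek--Spencer's Proposition 4.1 in one dimension, and the inductive step using a projection $\Z^d\to\Z^{d-1}$ built from LLL basis reduction (Lemma~\ref{thm:LLL}, Corollary~\ref{cor:LLL}) and Minkowski's theorem, followed by a Cauchy--Schwarz argument on overlaps of the sets $U^d(X,\vb,s)$ (Lemma~\ref{lem:sum-of-intersection}, Lemma~\ref{lem:sum-over-intersection-n-dim}). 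Your sentence ``this is where the multidimensional geometry really enters'' points at the right place, but the ``direct lattice count'' you sketch gives only the naive bound; you have not described, and would still need, the density-sensitive refinement that makes the round-sum converge. (Your bookkeeping point (i) --- bounding $|A\cap U_{k-1}|$ by $|A|$ --- is also lossier than the paper's approach of working with residue classes \emph{restricted to $X$}, but this is minor compared to the missing counting lemma.)
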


The general proof strategy is similar to that in the paper by Matou\v{s}ek and Spencer \cite{MS}. However, new ideas are needed to make the strategy work.  
In particular, we need to overcome some difficulties arising from geometric aspects which requires delicate analysis and tools like Minkowski's theorem and the Lenstra-Lenstra-Lov\'{a}sz basis reduction algorithm.

It is natural to study the generalization of the problem to the discrepancy of arithmetic progressions in grids of side lengths that are not necessarily equal. Given positive integers $N_1, \dots, N_d$, let $\Omega = [N_1]\times \cdots \times [N_d] \subseteq \Z^d$ and $\mathcal{A}_\vN$ be the set of arithmetic progressions in $d$ dimensions that are subsets of $\Omega$, where $\vN = (N_1, \dots, N_d)$. The discrepancy is defined in a similar way, 
	\[\disc(\mathcal{A}_\vN):= \min_{\chi: \Omega\to \{ 1, -1\}}\max_{A\in \mathcal{A}_\vN} |\chi(A)|.\] 
In the proof of Theorem \ref{thm:main}, we shall see that we will have to consider more generally grids with side lengths of comparable size (see \eqref{eqn:almost-cubes}). 
\begin{theorem}\label{thm:almost-cubes}
	    For any positive integer $d$ and positive integers $N_1, \dots, N_d$, if $\delta > 0$ satisfies that
	    \begin{equation}\label{eqn:almost-cubes}
	    N_1\cdots N_d \leq  \left(\min_{1\leq i\leq d}N_i\right)^{d+1-\delta}, 
	    \end{equation}
	    	    then there exist positive constants $c_d, C_d$ such that for $\vN = (N_1, \dots, N_d)$,
	    \[c_d\left(N_1\cdots N_d\right)^\frac{1}{2d+2} \le \disc(\mathcal{A}_{\vN}) 
	     \leq C_d\cdot \frac{1}{\delta}\left(N_1\cdots N_d\right)^\frac{1}{2d+2}.
	    \]
\end{theorem}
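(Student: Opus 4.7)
\emph{Plan for the proof.}
The plan is to establish the lower bound by extending Roth's Fourier-analytic argument (as Valk\'o did in the cubic case), and to prove the upper bound by the Matou\v{s}ek--Spencer iterated partial coloring method, reusing the geometric ingredients (Minkowski's theorem and the LLL basis reduction) already needed for Theorem \ref{thm:main}. Throughout, let $M := N_1\cdots N_d$ and $m := \min_{1\le i\le d}N_i$, so that the hypothesis \eqref{eqn:almost-cubes} reads $M\le m^{d+1-\delta}$.

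\emph{Lower bound.} I will follow the standard Roth--Valk\'o template. Fix a length $L$ to be optimized and consider the second moment $S := \sum_{(\va,\vb)} |\chi(\AP_d(\va,\vb,L))|^2$, where the sum runs over all parameters with $\AP_d(\va,\vb,L)\subseteq\Omega$. Expanding $|\chi|^2$ and swapping sums gives $S = \sum_{\vx,\vy\in\Omega}\chi(\vx)\chi(\vy) K(\vx,\vy)$, where $K(\vx,\vy)$ counts AP's in the sum containing both $\vx$ and $\vy$. The diagonal contribution $\vx=\vy$ supplies the main lower bound on $S$ in terms of $|\Omega|$, $L$, and the number of admissible steps $\vb$; off-diagonal terms are handled by Fourier inversion on a suitable ambient torus. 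The matching upper bound $S \le (\#\,\text{AP's})\cdot\disc(\mathcal{A}_\vN)^2$, together with a choice of $L$ balancing the two estimates, will deliver $\disc(\mathcal{A}_\vN)\ge c_d M^{1/(2d+2)}$. The rectangular shape only affects the parameter choices, not the structure of the argument, and \eqref{eqn:almost-cubes} will not be needed for this half.

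\emph{Upper bound.} The plan is to build the coloring by iterated partial colorings. At stage $k$ I will produce a $\{-1,0,1\}$ coloring of the currently uncolored cells $\Omega_k$ that zeros out at most half of $\Omega_k$ and satisfies $|\chi(A\cap\Omega_k)|\le t_k$ for every AP $A$, then recurse on the zero set. The final discrepancy is then at most $\sum_k t_k$. The existence of each partial coloring comes from a partial coloring lemma of Beck (entropy method) or Lovett--Meka type; its hypothesis reduces to bounding the number of essentially distinct AP-indicator vectors on $\Omega_k$, at some prescribed accuracy, by $2^{O_d(|\Omega_k|)}$. This AP-type count is the main combinatorial content. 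If the count is tight up to constants depending only on $d$, then the sequence $(t_k)$ is geometric with appropriate ratio and the total sum is $O_d(M^{1/(2d+2)})$.

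\emph{Main obstacle.} Naively there are too many step vectors $\vb$ to list, so the plan is to use LLL to replace each $\vb$ by a short reduced representative modulo a lattice adapted to the current rectangle, and Minkowski's theorem to count the essentially distinct reduced $\vb$ on each length scale, exactly as in the proof of Theorem \ref{thm:main}. The main difficulty is that the ambient rectangle is no longer a cube, so the relevant lattices are skewed by a factor controlled by $M/m^d\le m^{1-\delta}$; carrying this skew through the reduction and the counting uniformly in the $N_i$ is the key technical step, and its quantitative dependence on $\delta$ is where the $1/\delta$ factor in the final upper bound is expected to enter. The role of the hypothesis \eqref{eqn:almost-cubes} is precisely to control this skew, and the reason the theorem is stated only in that regime is that without a bound on $M/m^d$ the lattice-geometric step loses too much.
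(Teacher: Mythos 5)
Your overall architecture matches the paper's: Fourier/Roth--Valk\'o second-moment for the lower bound, iterated entropy-based partial coloring for the upper bound, with Minkowski and LLL as the geometric inputs. But as a proof plan the upper-bound half has two genuine gaps.

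First, you never address the step that actually separates this theorem from Valk\'o's log-losing bound. If you simply count ``essentially distinct AP-indicator vectors'' at the naive rate, the partial coloring lemma gives the same discrepancy bound at every iteration, the per-stage costs $t_k$ do \emph{not} decay, and summing over the $\Theta(\log M)$ stages reproduces the $(\log M)$-factor. What the paper needs (Lemma \ref{lem:counting-subsets-3}) is a \emph{density-improved} count: after dyadically decomposing each AP into canonical blocks of size $2^t$ (a step you omit but need, since the partial coloring lemma is applied to the canonical family $\CC_X$, not to all APs), the number of canonical blocks of size $s$ living in the uncolored set $X$ is bounded by $C_d (N_1\cdots N_d) |X| s^{-(d+1)} \cdot \rho^{c_d\delta}$ with $\rho = |X|/(N_1\cdots N_d)$, in a middle range of $s$. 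That extra $\rho^{c_d\delta}$ is what makes the per-stage bound $t_k \lesssim_d (N_1\cdots N_d)^{1/(2d+2)} \rho_k^{c_d\delta}$ \emph{decrease} geometrically, and the $1/\delta$ in the statement is exactly $\sum_k (0.9)^{c_d\delta k} \approx \frac{1}{1-(0.9)^{c_d\delta}} \approx \frac{1}{c_d\delta}$. Your account of where $1/\delta$ enters (``the lattice-geometric step loses too much'') does not identify this mechanism and is not consistent with your earlier claim that the final sum is $O_d(M^{1/(2d+2)})$.

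Second, your description of how LLL and Minkowski are used (``replace each $\vb$ by a short reduced representative modulo a lattice adapted to the current rectangle'') is not how the counting improvement is obtained, and it is not clear that route would close. In the paper the density-improved count is proved by induction on $d$: for each common difference $\vb$ one builds (via LLL and Minkowski on the lattice of integer vectors orthogonal to a rescaled $\vb$) a linear projection $f_\vb : \Z^d \to \Z^{d-1}$ that collapses the $\vb$-direction and sends $[N_1]\times\cdots\times[N_d]$ into a comparable $(d-1)$-dimensional box; one then applies the induction hypothesis to $f_\vb(U^d(X,\vb,s))$ and closes with a Cauchy--Schwarz argument (Lemma \ref{lem:sum-of-intersection}) comparing $\sum_\vb |U^d(X,\vb,s)|$ to the double sum of pairwise intersections, with the one-dimensional Matou\v{s}ek--Spencer count as the base case. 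The condition \eqref{eqn:almost-cubes} is used precisely to verify that the image box again satisfies a hypothesis of the same shape (with $\delta$ degraded to $\delta/3$), so the induction runs. Without this dimension-reducing projection and the induction, it is not clear how your reduced-representative count alone produces the needed $\rho^{c_d\delta}$ saving.
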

We remark that Theorem~\ref{thm:almost-cubes} implies Theorem~\ref{thm:main} by choosing $\delta =1$ and $N_1 = N_2 = \cdots = N_d = N$. 

The lower bound in Theorem~\ref{thm:almost-cubes} holds even without condition \eqref{eqn:almost-cubes}. Theorem~\ref{thm:rectangular} gives a more general lower bound, and a matching upper bound up to a sub-logarithmic factor for general grids of differing side lengths. The lower bound in Theorem~\ref{thm:rectangular} implies the lower bound in Theorem~\ref{thm:almost-cubes} by taking $I = [d]$ in the maximum.
The proof of lower bound uses Fourier analytic tools. 

\begin{theorem}\label{thm:rectangular}
	    For any positive integer $d$ and $\vN = (N_1, \dots, N_d)$ where the $N_i$'s are positive integers whose product is at least three, there exist positive constants $c_d, C_d$ such that
	    \begin{equation}\label{eqn: general rectangular}
	        c_d\max_{I\subseteq [d]}\left(\prod_{i\in I}N_i\right)^\frac{1}{2|I|+2} \le \disc(\mathcal{A}_{\vN}) 
	     \leq C_d \frac{\log (N_1\cdots N_d)}{\log \log (N_1\cdots N_d)} \cdot \max_{I\subseteq [d]}\left(\prod_{i\in I}N_i\right)^{\frac{1}{2|I|+2}}.
	    \end{equation}

	    Here by convention if $I = \emptyset$ then $\prod_{i\in I}N_i = 1$.
\end{theorem}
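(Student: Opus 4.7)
I would first prove the rectangular base case $\disc(\mathcal{A}_\vN) \geq c_d (N_1\cdots N_d)^{1/(2d+2)}$ by adapting Roth's Fourier-analytic argument \cite{Roth} (as extended by Valk\'o \cite{Valko} to cubes) to rectangular grids. Given $\chi:\prod_i[N_i]\to\{\pm1\}$ with every AP-sum bounded in absolute value by $D$, extend $\chi$ by zero to $\prod_i \Z/M_i\Z$ with each $M_i=\Theta(N_i)$, and for each common difference $\vb$ in an appropriate box $[-B_1,B_1]\times\cdots\times[-B_d,B_d]$ and each length $L$ consider the squared AP-sum summed over valid starts. Bounding this quantity above by $D^2$ times the count of valid starts, and averaging over $(\vb,L)$ while applying Parseval to relate it to $\|\chi\|_2^2=N_1\cdots N_d$ via a Fej\'er-type kernel, yields $D\geq c_d(N_1\cdots N_d)^{1/(2d+2)}$ after optimizing the ranges under the constraint $(L-1)B_i\lesssim N_i$. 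To upgrade to the $\max_I$ bound I then slice: for any $I\subseteq[d]$ and any $(t_j)_{j\notin I}\in\prod_{j\notin I}[N_j]$, the restriction of $\chi$ to $\{\vx:x_j=t_j\text{ for }j\notin I\}$ is a $\pm 1$-coloring of $\prod_{i\in I}[N_i]$ in which every AP lifts to an element of $\mathcal{A}_\vN$ with the same signed sum. Hence $\disc(\mathcal{A}_\vN)\geq \disc(\mathcal{A}_{\vN_I})$ for $\vN_I:=(N_i)_{i\in I}$, and applying the base case in dimension $|I|$ and taking the maximum over $I$ finishes the lower bound.

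\textbf{Upper bound: reduction to almost-cubes.} The plan is to partition the grid into sub-boxes satisfying \eqref{eqn:almost-cubes} and apply Theorem~\ref{thm:almost-cubes} to each. Pick $I^*\subseteq[d]$ approximately realizing the max on the right-hand side of \eqref{eqn: general rectangular}, and partition each direction $j\notin I^*$ into consecutive intervals of length $K_j$ chosen so that the sub-box shape $\prod_{i\in I^*}[N_i]\times\prod_{j\notin I^*}[K_j]$ satisfies \eqref{eqn:almost-cubes} with parameter $\delta\asymp 1/\log\log M$, where $M=N_1\cdots N_d$. Color each sub-box by the Theorem~\ref{thm:almost-cubes} coloring multiplied by an independent random sign in $\{\pm 1\}$. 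APs entirely contained in a single sub-box are handled directly by Theorem~\ref{thm:almost-cubes} at an overhead cost of $1/\delta\asymp \log\log M$; APs crossing several sub-boxes decompose into chunks, one per sub-box visited, and the independent signs turn the crossing sum into a bounded-martingale-difference sum that is controlled by a Chernoff/Azuma bound combined with a union bound over common differences and lengths.

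\textbf{Main obstacle.} The technical heart will be the simultaneous tuning of the partition scales $K_j$ and the almost-cube parameter $\delta$. Taking $\delta$ small weakens Theorem~\ref{thm:almost-cubes} by a factor $1/\delta$, while taking $\delta$ large forces the $K_j$ close to the side lengths in $I^*$, leaving little room for the random signs to cancel crossing contributions. A multi-scale or iterated partition (for instance refining one direction at a time, or stratifying by AP-length class) is likely needed so that the Chernoff bound on each class of crossing APs, after a union bound, stays below $B:=\max_I(\prod_{i\in I}N_i)^{1/(2|I|+2)}$. Balancing all of these parameters so that the total overhead is exactly $\log M/\log\log M$ rather than a full power of $\log M$ is where I expect the real work to lie.
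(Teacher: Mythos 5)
Your lower-bound plan is sound and arrives at essentially the same conclusion as the paper. You propose to prove the ``balanced'' estimate $\disc(\mathcal A_\vN)\geq c_d(N_1\cdots N_d)^{1/(2d+2)}$ by the Roth/Valk\'o Fourier argument and then pass to the general $\max_I$ bound by slicing: for any $I$ and any fixed $(t_j)_{j\notin I}$ the restriction of $\chi$ gives $\disc(\mathcal A_\vN)\geq\disc(\mathcal A_{\vN_I})$. This is correct. The paper's Theorem~\ref{thm:rectangular-lower} folds the same slicing into the Fourier argument directly by taking $D_j=0$ for $j\notin I$ in the common-difference box, so the two routes are essentially equivalent; yours is a clean way to present it.

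\textbf{Upper bound: there is a genuine gap.} The plan to tile the grid by $d$-dimensional sub-boxes of shape $\prod_{i\in I^*}[N_i]\times\prod_{j\notin I^*}[K_j]$ that each satisfy \eqref{eqn:almost-cubes} cannot be carried out whenever $I^*\subsetneq[d]$ is a strict maximizer. Write $t=|I^*|$, $P=\prod_{i\in I^*}N_i$, and $K^*=\min_{j\notin I^*}K_j$. Since $K_j\leq N_j\leq N_t$ for $j\notin I^*$, the minimum side of the sub-box is $K^*$, and \eqref{eqn:almost-cubes} forces
\[
P\cdot\prod_{j\notin I^*}K_j\leq (K^*)^{d+1-\delta}\quad\Longrightarrow\quad P\leq (K^*)^{t+1-\delta}\quad\Longrightarrow\quad K^*\geq P^{1/(t+1-\delta)}.
\]
On the other hand, optimality of $I^*$ (adding any $j\notin I^*$ cannot increase $(\prod_{i\in I}N_i)^{1/(|I|+1)}$) gives $N_j\leq P^{1/(t+1)}$ for every $j\notin I^*$, so $K^*\leq N_j\leq P^{1/(t+1)}$. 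Combining the two bounds requires $P^{1/(t+1-\delta)}\leq P^{1/(t+1)}$, i.e.\ $\delta\leq 0$ (unless $P=1$). So for every $\delta>0$ and every nondegenerate instance with $|I^*|<d$, no admissible choice of the $K_j$ exists; you cannot shrink a direction whose side length is already below the almost-cube threshold. (The borderline case $K^*=N_t$, where the $\min$ could shift to $N_t$, forces $N_j=N_t$ for all $j\notin I^*$; one checks that then $I^*=[d]$ would have been at least as good, so the issue does not arise.)

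The paper avoids this by \emph{removing} small dimensions rather than subdividing them: Lemma~\ref{lem:slice} takes an optimal coloring $\chi'$ of $[N_1]\times\cdots\times[N_{d-1}]$ and extends it to $[N_1]\times\cdots\times[N_d]$ by multiplying by i.i.d.\ Rademacher signs $v(x_d)$. APs with $k_d=0$ inherit the discrepancy of $\chi'$; APs with $k_d\neq 0$ have length at most $N_d$, so $\chi(S)$ is a signed sum of at most $N_d$ independent $\pm1$ terms and a Chernoff/union bound gives $O(\sqrt{N_d\log(N_1\cdots N_d)})$. Iterating this drops $N_{t+1},\ldots,N_d$, after which the remaining $t$-dimensional grid $[N_1]\times\cdots\times[N_t]$ \emph{does} satisfy \eqref{eqn:almost-cubes} with $1/\delta=O(\log M/\log\log M)$, and Theorem~\ref{thm:almost-cubes} finishes. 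Note the reduction changes the ambient dimension (from $d$ to $t$), which is precisely what your fixed-dimension partition scheme is unable to do. I'd suggest replacing the ``partition the small directions'' step with a dimension-reduction lemma of this flavor.
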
 
\begin{remark}
Since $\disc(\mathcal A_\vN)$ does not depend on the order of the $N_i$'s, we may assume without loss of generality that $N_1\geq N_2\geq \cdots \geq N_d$. In this case \[\max_{I\subseteq [d]}\left(\prod_{i\in I}N_i\right)^{\frac{1}{2|I|+2}} = \max_{1\leq k\leq d}\left(\prod_{i=1}^k N_i\right)^\frac{1}{2k+2}.\]
\end{remark}

\noindent {\bf Organization.} 
In Section~\ref{sec:decomposition}, we show how to efficiently decompose arithmetic progressions into ``canonical'' arithmetic progressions and provide an upper bound on the number of such canonical arithmetic progressions with a given size. This implies that a coloring which has low discrepancy in canonical arithmetic progressions of each possible size also achieves low discrepancy for all arithmetic progressions (see Lemma \ref{lem:decomposition} for details). We prove the upper bound in Theorem~\ref{thm:almost-cubes} in Section~\ref{sec: first coloring} by showing the existence of a coloring which has low discrepancy in canonical arithmetic progressions of each possible size. In the proof we use an improved bound on the number of canonical arithmetic progressions with each given size, the proof of which is deferred to Section~\ref{sec:geometry}. We finally study the case of grids with different side lengths in the last three sections. We prove the lower bound of Theorem~\ref{thm:rectangular} in Section~\ref{sec: rec-lower} and the upper bound of Theorem~\ref{thm:rectangular} in Section~\ref{sec: rec-upper}. Finally, we have some concluding remarks in Section~\ref{sec: conclusion and open problem}, including a conjecture that the lower bound for the 
the discrepancy for grids in Theorem \ref{thm:rectangular} is tight up to the constant factor.
	
	\begin{notations}
	Throughout the paper, all logarithms are base $e$ unless specified.
    We generally assume that $d$ is fixed, except in Section~\ref{sec:geometry} where the proof relies on an induction argument on $d$.
	We use symbols $c, c_1, c_2, C_0, C, C^{*}$ to denote positive absolute constants, and $c_d, C_d$ to denote those that only depend on $d$.
	We use notation $f = O_d(g)$ if there exists a positive constant $C_d$ so that $f \leq C_dg$. 
	\end{notations}
	
	\section{Decomposition}\label{sec:decomposition}
	Let $N_1, \dots, N_d$ be positive integers, $\Omega = [N_1]\times \cdots \times [N_d]$, and $\vN = (N_1, \dots, N_d)$.
	
	To find a coloring giving low discrepancy,
	the general idea is to apply a partial coloring lemma (specifically
	Lemma \ref{lem:partial-coloring-lemma}, whose proof uses the entropy method)
	to repeatedly partially color $\Omega$ until we get a full coloring of $\Omega$ with low discrepancy. At each stage, we color a constant fraction of the remaining uncolored elements, 
	until we get a full coloring of $\Omega$ with low discrepancy. To accomplish this, we show that for any $X\subseteq \Omega$, there is a partial coloring of $X$ with low discrepancy. Once we have this statement, we may apply this with $X$ as $\Omega$ in the initial iteration to get a partial coloring of $\Omega$, and then pick $X$ as the set of uncolored elements of $\Omega$ in later iterations. Hence more generally the set family we need to consider is $(X, \CA_X)$ where $\CA_X := \{A\cap X: A\in \CA_\vN\}.$
	
The family of sets $\CA_X$ is too large if we want to apply 
Lemma \ref{lem:partial-coloring-lemma}
on $(X, \CA_X)$ directly. Instead we apply it to a small subfamily $\CC_X \subseteq \CA_X$ so that any set in $\CA_X$ can be efficiently decomposed into sets in $\CC_X$.
	For each $\vb\in \Z^d\setminus \{\vzero\}$, we may partition elements in $X$ into congruence classes modulo $\vb$. For each congruence class $I = \{\vx\in X: \vx \equiv \va \pmod \vb\}$, since distinct elements in $I$ differ by nonzero multiples of $\vb$, and their dot products with $\vb$ differ by nonzero multiples of $\|\vb\|^2\ne 0$, we may order elements in $I$ by their dot products with $\vb$. Write $I = \{\vx_1, \vx_2, \dots, \vx_l\}$, where the subscripts respect the ordering and $l = |I|$. Now any set in $\CA_X$ can be written as $\{\vx_u: i\leq u \leq j\}$ for some $(\vb, I, i, j)$.
	We use the following decomposition. For each $\vb\in \Z^d\setminus \{\vzero\}$ and congruence class $I$ modulo $\vb$, we consider sets of the form $\{\vx_u: (j-1)s + 1 \leq u \leq js\}$ where $s = 2^t$ is a power of $2$, and $1\leq j\leq \lfloor l/s\rfloor$. Let $\CC_X$ be the collection of such sets for all $(\vb, I)$. All sets in $\CC_X$ are of sizes powers of $2$.
	\begin{lemma}\label{lem:decomposition}
	Let $b: \N \to (0, \infty)$ be a function. If $\chi$ is a partial coloring of $X$ so that
	\[|\chi(S)| \leq b(|S|)\]
	for all $S\in \CC_X$, then
	\[|\chi(A)| \leq 2\sum_{s: s = 2^t}b(s)\]
	for all $A\in \CA_X$.
	\end{lemma}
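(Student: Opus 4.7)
The statement is fundamentally combinatorial. Fix $A \in \CA_X$; by the setup preceding the lemma, $A$ corresponds to a tuple $(\vb, I, i, j)$, and after writing $I = \{\vx_1, \dots, \vx_l\}$ in the order induced by dot product with $\vb$, we have $A = \{\vx_u : i \leq u \leq j\}$. The elements of $\CC_X$ coming from the same pair $(\vb, I)$ are exactly those whose index set is an interval of the form $[(k-1)s+1,\,ks]$ with $s = 2^t$; I will call such index intervals \emph{dyadic}. The lemma then follows from the triangle inequality once we prove the following purely arithmetic claim: every integer interval $[i,j]$ can be partitioned into dyadic intervals using at most two intervals of each size. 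Indeed, if $A$ is so decomposed, then
\[|\chi(A)| \le \sum_{S} |\chi(S)| \le \sum_{S} b(|S|) \le 2 \sum_{s\,:\,s = 2^t} b(s).\]

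The heart of the argument is thus producing the dyadic partition, and this is the main obstacle. The plan is a symmetric ``peeling'' procedure. Let $T$ be the largest integer with $2^T \leq j - i + 1$. A short counting argument gives existence and uniqueness of a dyadic interval $D \subseteq [i,j]$ of size exactly $2^T$: at least one exists since $j-i+1 \ge 2^T$, and at most one fits since two disjoint dyadic blocks of size $2^T$ would already exhaust length $2^{T+1} > j - i + 1$. Remove $D$ to leave a left remainder $L$ ending at a multiple of $2^T$ and a right remainder $R$ beginning immediately after a multiple of $2^T$; both have length strictly less than $2^T$.

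The final task is to decompose each of $L$ and $R$ using at most one dyadic block of each size less than $2^T$. For $L$ I plan to peel greedily from the right, at each step extracting the largest dyadic interval that ends at the current right endpoint. The key, and only genuinely nontrivial, observation is that after each peel the new right endpoint of $L$ remains divisible by every power of two that could be needed next; this reduces to a short divisibility check using that the initial right endpoint is a multiple of $2^T$. The sizes extracted are then precisely the positions of the set bits in the binary expansion of $|L|$, so at most one dyadic of each size is used. The symmetric procedure applied from the left handles $R$, and combining with the central block $D$ yields the desired partition and hence the lemma.
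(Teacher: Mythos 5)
Your reduction to the purely combinatorial statement is correct, but the dyadic-partition argument you sketch has a genuine gap at the very first step. You claim that with $T$ the largest integer satisfying $2^T \le j-i+1$, at least one dyadic interval of size $2^T$ is contained in $[i,j]$, ``since $j-i+1\ge 2^T$.'' This is false: the dyadic blocks of size $2^T$ are the \emph{aligned} intervals $[(k-1)2^T+1,\,k2^T]$, and a window of length $\geq 2^T$ need not contain any of them. For instance with $[i,j]=[2,7]$ we have $j-i+1=6\ge 4=2^T$, yet neither $[1,4]$ nor $[5,8]$ lies inside $[2,7]$. Because of this, the ``central block $D$'' you peel off may not exist, and the remainder of your construction (peeling $L$ and $R$ from a boundary that is a multiple of $2^T$) never gets started. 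The combinatorial claim itself --- that $[i,j]$ can be partitioned into aligned dyadic blocks using at most two of each size --- is true, but your proof of it is not; one correct repair is to observe that when no block of size $2^T$ fits, $[i,j]$ contains exactly one multiple of $2^T$, say $m$, and one can split $[i,j]=[i,m]\cup[m+1,j]$ into two pieces each shorter than $2^T$, then peel each piece from its aligned endpoint.

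The paper avoids this issue entirely by not attempting a partition of $A$ at all. It writes $A=A_1\setminus A_2$, where $A_1=\{\vx_u : 1\le u\le j\}$ and $A_2=\{\vx_u : 1\le u\le i-1\}$ are \emph{prefix} intervals. A prefix $[1,j]$ decomposes uniquely, via the binary expansion of $j$, into a disjoint union of aligned dyadic blocks of pairwise \emph{distinct} sizes, so $|\chi(A_1)|\le\sum_s b(s)$ and likewise for $A_2$, and the triangle inequality applied to the difference gives $|\chi(A)|\le 2\sum_s b(s)$. This is shorter and sidesteps the alignment subtlety that trips up your direct-partition route. Your approach is a reasonable alternative but it does require the more careful case analysis sketched above; as written, the existence of the central block is an unjustified (and in general false) step.
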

	\begin{proof}
	For any $A\in \CA_X$, we know that it can be written as $A_0 \cap X$ for some arithmetic progression $A_0\in \CA_\vN$. Let $\vb$ be the common difference of $A_0$, and let $I$ be intersection of $X$ and the congruence class mod $\vb$ containing $A_0$. Then $A$ is a subset of $I$. Moreover, as we describe in the procedure above, if we order elements in $I = \{\vx_1, \vx_2, \dots, \vx_l\}$ by their dot product with $\vb$, we know that $A$ must be in the form $\{\vx_u: i\leq u \leq j\}$. We write $A = A_1\setminus A_2$ where $A_1 = \{\vx_u: 1\leq u \leq j\}$ and $A_2 = \{\vx_u: 1\leq u \leq i-1\}$. Then we know that $A_1$ can be written as a disjoint union of sets in $\CC_X$ of different sizes $A_1 = S_1 \cup S_2 \cup \dots \cup S_t$ using the binary representation of $j$, where $t$ is the number of digits $1$ in the representation. Also note that all sets in $\CC_X$ are of sizes powers of $2$, so we have
	\[|\chi(A_1)| = \left|\sum_{k=1}^t \chi(S_k)\right| \leq \sum_{k=1}^t |\chi(S_k)| \leq \sum_{s: s = 2^t}b(s).\]
	We may prove the similar inequality for $\chi(A_2)$ by replacing $j$ with $i-1$. Combining them we get
	\[|\chi(A)| = |\chi(A_1) - \chi(A_2)| \leq 2\sum_{s: s = 2^t}b(s).\]
	\end{proof}
	
% 	\subsection{Counting}\label{sec:counting}
% \todo{Is it necessary to have  a subsection title here?}

To apply the partial coloring lemma, Lemma \ref{lem:partial-coloring-lemma}, to $(X, \CC_X)$, we need to estimate the number of sets of each size, and pick each $\Delta_S$ appropriately. Let $s = 2^t$ be any power of $2$, we define $f(s, X)$ to be the number of sets of size $s$ in $\CC_X$. Note that $f(1, X) = |X|$. 
	
	For a positive integer $s$, a finite set $X \subseteq \Z^d$ and a vector $\vb\in \Z^d\setminus \{\vzero\}$, let $U^d(X, \vb, s)$ denote the set of all $x\in X$, whose residue class mod $\vb$ contains at least $s$ elements in $X$, or formally $\{x'\in X: x'\equiv x \pmod \vb\}$ is of size at least $s$. The following inequality shows how these sets $U^d$ relate to the quantity $f(s, X)$.
	
	\begin{lemma}\label{lem:counting-subsets-by-U} Let $X\subseteq \Omega$, and $s$ be a power of $2$. Then
	\begin{equation}\label{eqn:counting-subsets-by-U}f(s, X) \leq \frac{1}{s}\sum_{\vb\in \Z^d\setminus \{\vzero\}}|U^d(X, \vb, s)|.\end{equation}
	\end{lemma}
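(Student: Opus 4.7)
The proof will be a straightforward double counting, so my plan is just to make the bookkeeping transparent. The key observation is that each set in $\CC_X$ of size $s$ arises from a tuple $(\vb, I, j)$ where $\vb \in \Z^d \setminus \{\vzero\}$, $I$ is a congruence class of $X$ modulo $\vb$ with $|I|\ge s$, and $1 \le j \le \lfloor |I|/s\rfloor$. The same set may correspond to many tuples (different $\vb$'s can give the same block), so the total count over all tuples is an upper bound for $f(s,X)$.

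\textbf{Step 1: Fix $\vb$ and count tuples.} For a fixed $\vb \in \Z^d\setminus\{\vzero\}$, partition $X$ into its congruence classes $I_1, I_2, \dots$ modulo $\vb$, and write $l_k = |I_k|$. By the construction in the excerpt, the number of size-$s$ sets in $\CC_X$ produced by this $\vb$ equals $\sum_{k : l_k \ge s} \lfloor l_k / s\rfloor$. Using $l_k \ge s\lfloor l_k/s\rfloor$, this is at most
\[
\frac{1}{s}\sum_{k : l_k \ge s} l_k .
\]

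\textbf{Step 2: Relate to $U^d(X,\vb,s)$.} By definition, $U^d(X,\vb,s)$ collects exactly those elements of $X$ whose residue class modulo $\vb$ has size at least $s$, so
\[
|U^d(X,\vb,s)| = \sum_{k : l_k \ge s} l_k .
\]
Combining with Step~1, the number of size-$s$ sets in $\CC_X$ coming from $\vb$ is at most $|U^d(X,\vb,s)|/s$.

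\textbf{Step 3: Sum over $\vb$.} Every set in $\CC_X$ of size $s$ arises from at least one $\vb$, so summing the bound from Step~2 over all $\vb \in \Z^d\setminus\{\vzero\}$ yields
\[
f(s,X) \le \frac{1}{s}\sum_{\vb\in\Z^d\setminus\{\vzero\}} |U^d(X,\vb,s)|,
\]
which is exactly \eqref{eqn:counting-subsets-by-U}. I do not foresee any real obstacle; the only mild subtlety is keeping in mind that the right-hand side overcounts (a set may be counted for several $\vb$'s), which is precisely why we get an inequality rather than equality.
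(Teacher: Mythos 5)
Your proof is correct and follows essentially the same double-counting argument as the paper: fix $\vb$, bound the number of size-$s$ blocks per congruence class by $\lfloor |I_k|/s\rfloor \le |I_k|/s$, identify $\sum_{k:|I_k|\ge s}|I_k|$ with $|U^d(X,\vb,s)|$, and sum over $\vb$. No gaps.
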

	\begin{proof}We would like to estimate the number of sets in $\CC_X$ of size $s$. For each $\vb\in \Z^d\setminus \{\vzero\}$, we partition $X$ into the congruence classes modulo $\vb$ which we denote by $I_1, \dots, I_t$. By definition, $U^d(X, \vb, s)$ is the disjoint union of all $I_k$ that contains at least $s$ elements.
	Each set of size $s$ in $\CC_X$ lies entirely in some $I_k$ for some appropriate choice of $\vb$ and $I_k$. The number of such sets in $I_k$ %is the number of choices of $j$ for $1\leq j\leq \lfloor |I_k|/s\rfloor$.
	is at most $\lfloor |I_k|/s\rfloor$.
	%\todo{I think the previous sentence needs to be explained better as $j$ has not yet appeared.}
	Therefore if we sum over all congruence classes, the number of sets in $\CC_X$ of size $s$ for any fixed $\vb$ is
	\[\sum_{k=1}^t \lfloor |I_k|/s\rfloor \leq \sum_{k: |I_k| \geq s} \frac{|I_k|}{s} = \frac{|U^d(X, \vb, s)|}{s}.\]
	Summing over all possible common differences $\vb\in \Z^d\setminus \{\vzero\}$, we obtain \eqref{eqn:counting-subsets-by-U}. \end{proof}
	
	Hence we need to estimate the sum of the numbers of elements in these $U^d$ sets. We have the following simple upper bound.
	\begin{lemma}\label{lem:counting-U-1}
	For any $s\geq 2$ and $X\subseteq [N_1]\times \cdots \times [N_d]$, we have
	\[\sum_{\vb\in \Z^d\setminus \{\vzero\}}|U^d(X, \vb, s)|\leq |X| \cdot \prod_{i=1}^d \left(4\frac{N_i}{s}+1\right).\]
	\end{lemma}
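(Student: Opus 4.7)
The plan is to swap the order of summation so that for each fixed $x\in X$ we bound the number of admissible common differences $\vb$. Writing
\[\sum_{\vb\in\Z^d\setminus\{\vzero\}}|U^d(X,\vb,s)| = \sum_{x\in X}|V_x|, \qquad V_x:=\{\vb\in\Z^d\setminus\{\vzero\}: x\in U^d(X,\vb,s)\},\]
it suffices to show $|V_x|\leq \prod_{i=1}^d(4N_i/s+1)$ for every $x\in X$, since summing this over $x$ yields the claimed bound.

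The main step is a short geometric observation: every $\vb\in V_x$ must be small in each coordinate. Indeed, by the definition of $U^d$ there exist integers $k_1<k_2<\cdots<k_s$ with $x+k_i\vb\in X\subseteq [N_1]\times\cdots\times[N_d]$ for each $i$. Subtracting the first such point from the last, the vector $(k_s-k_1)\vb$ has $i$-th coordinate of absolute value at most $N_i-1$. Since $k_s-k_1\geq s-1$, this forces $|b_i|\leq (N_i-1)/(s-1)$, and using the hypothesis $s\geq 2$ one checks the elementary inequality $(N_i-1)/(s-1)\leq 2N_i/s$. Hence every $\vb\in V_x$ satisfies $|b_i|\leq 2N_i/s$ for all $i$.

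The number of $\vb\in\Z^d$ obeying these coordinate bounds is at most $\prod_{i=1}^d(2\lfloor 2N_i/s\rfloor+1)\leq \prod_{i=1}^d(4N_i/s+1)$, which gives the desired per-point bound on $|V_x|$, and summing over $x\in X$ finishes the proof. I do not foresee a real obstacle here; the only point requiring any care is matching the constant $4$ appearing in the statement, which boils down to the inequality $(N_i-1)/(s-1)\leq 2N_i/s$ valid for $s\geq 2$ and explains why the hypothesis $s\geq 2$ is needed (without it, $U^d(X,\vb,1)=X$ for every $\vb$, and the sum diverges).
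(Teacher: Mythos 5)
Your proof is correct and uses essentially the same idea as the paper, just organized as the dual count: you fix $x\in X$ and count admissible $\vb$'s, whereas the paper fixes $\vb$ and bounds the number of $\vb$ with $U^d(X,\vb,s)$ nonempty, then multiplies by the trivial bound $|U^d(X,\vb,s)|\leq|X|$. In both cases the key observation is identical — the existence of $s$ points of $[N_1]\times\cdots\times[N_d]$ in one residue class mod $\vb$ forces $|b_i|\leq (N_i-1)/(s-1)\leq 2N_i/s$ — and the final arithmetic matches the paper's constant.
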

	\begin{proof}
	    We focus on those $\vb$ for which $U^d(X, \vb, s)$ is nonempty. If $U^d(X, \vb, s)$ is nonempty, then we know that for each $i$, the $i$-th coordinate of $\vb$ is in the interval $(-\frac{N_i}{s-1}, \frac{N_i}{s-1})$. Therefore, the number of nonempty $U^d(X, \vb, s)$ is at most
	\[\prod_{i=1}^d \left(\frac{2N_i}{s-1}+1\right) \leq \prod_{i=1}^d \left(4\frac{N_i}{s}+1\right).\]
	Applying the trivial bound $|U^d(X, \vb, s)| \leq |X| = m$, we get the desired inequality.
	\end{proof}
	
By combining Lemma~\ref{lem:counting-subsets-by-U} and Lemma~\ref{lem:counting-U-1} above, the following upper bound on $f(s, X)$ holds.
	\begin{corollary}\label{cor:counting-subsets-1}
	For any $1\leq s \leq \min_{1\leq i\leq d}N_i$, we have
	\[f(s, X) \leq 5^d\frac{N_1\cdots N_d}{s^{d+1}}|X|.\]
	\end{corollary}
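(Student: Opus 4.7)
The plan is to chain the two preceding lemmas and then simplify the resulting product using the hypothesis $s \le \min_i N_i$. Substituting the bound of Lemma~\ref{lem:counting-U-1} directly into Lemma~\ref{lem:counting-subsets-by-U} immediately yields, for $s \geq 2$,
\[f(s, X) \;\leq\; \frac{|X|}{s}\prod_{i=1}^d\left(4\frac{N_i}{s}+1\right).\]

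The remaining step is to convert each factor $4N_i/s + 1$ into a clean multiple of $N_i/s$. This is exactly where the hypothesis $s \leq N_i$ for every $i$ is used: it gives $N_i/s \geq 1$, hence $4N_i/s + 1 \leq 5N_i/s$. Taking the product over $i = 1, \dots, d$ pulls out a factor of $5^d$ and $\prod_i N_i/s = N_1\cdots N_d / s^d$, and combining with the $1/s$ already in front gives exactly $5^d N_1 \cdots N_d\,|X| / s^{d+1}$, as desired.

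The edge case $s = 1$ is not directly covered by Lemma~\ref{lem:counting-U-1}, which is stated for $s \geq 2$, so I would handle it separately. But this is trivial: by definition $f(1, X) = |X|$, and the right-hand side is $5^d N_1\cdots N_d\,|X| \geq |X|$, so the inequality is immediate.

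There is essentially no substantive obstacle here; the corollary is a direct composition of Lemmas~\ref{lem:counting-subsets-by-U} and~\ref{lem:counting-U-1} with the elementary inequality $4x + 1 \leq 5x$ valid for $x \geq 1$. The only mild care needed is to treat the boundary case $s = 1$, and to recognize that the assumption $s \leq \min_i N_i$ is precisely what licenses the passage from $4N_i/s + 1$ to $5N_i/s$.
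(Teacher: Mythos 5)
Your proof is correct and is essentially identical to the paper's: it handles $s=1$ trivially via $f(1,X)=|X|$, then for $2\le s\le\min_i N_i$ chains Lemma~\ref{lem:counting-subsets-by-U} with Lemma~\ref{lem:counting-U-1} and uses $4N_i/s+1\le 5N_i/s$ (valid since $s\le N_i$). Nothing further to add.
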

	\begin{proof}
	When $s = 1$, we have $f(1, X) = |X|$ and  the inequality clearly holds. In the remaining cases, $2\leq s \leq \min_{1\leq i\leq d}N_i$, we would like to apply Lemma~\ref{lem:counting-subsets-by-U} and Lemma~\ref{lem:counting-U-1}. As $s\leq N_i$ for each $1\leq i\leq d$, it follows that 
	\[f(s, X) \leq \frac{1}{s} \cdot |X|\prod_{i=1}^d \left(4\frac{N_i}{s}+1\right) \leq 5^d\frac{N_1\cdots N_d}{s^{d+1}}|X|.\]\end{proof}
We remark that in Section~\ref{sec:geometry} we prove Lemma \ref{lem:counting-subsets-3} which, together with Lemma \ref{lem:counting-subsets-by-U}, gives a better upper bound on $f(s,X)$ than Corollary \ref{cor:counting-subsets-1}  for a certain range of $s$. 

	\section{Proof of the upper bound in Theorem \ref{thm:almost-cubes}}\label{sec: first coloring}
	
		We use the following version of the partial coloring lemma which was first proved by Matou\v{s}ek and Spencer in \cite{MS}, to show the existence of a partial coloring that colors a constant fraction of elements of a set system with low discrepancy.
\begin{lemma}[Section 4.6 in \cite{Mat}]\label{lem:partial-coloring-lemma}
	Let $(\Omega, \CC)$ be a set system on $n$ elements, and let a number $\Delta_S > 0$ be given for each set $S\in \CC$. Suppose that
	\begin{equation}\label{eqn:partial-coloring-lemma} 
	    \sum_{S\in \CC:S\ne \emptyset}g\left(\frac{\Delta_S}{\sqrt{|S|}}\right) \leq \frac{n}{5}
	\end{equation}
	where
	\begin{equation}\label{eqn:def-g}
	    g(\lambda) = \begin{cases} 10e^{-\lambda^2/4} & \textrm{if }\lambda \geq 2, \\ 10\log(1 + 2\lambda^{-1}) & \textrm{if } 0 < \lambda < 2. \end{cases}
	\end{equation}
	Then there exists a partial coloring $\chi$ that assigns $\pm1$ to at least $n/10$ variables (and $0$ to the rest), satisfying $|\chi(S)| \leq \Delta_S$ for each $S\in \CC$.
	\end{lemma}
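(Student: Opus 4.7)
My plan is to prove Lemma~\ref{lem:partial-coloring-lemma} by the entropy method of Beck, Spencer, and Matou\v{s}ek, which is the standard technique for partial coloring statements of this shape. Let $n = |\Omega|$ and consider the uniformly random sign vector $\chi \in \{-1,+1\}^{\Omega}$. For each $S \in \CC$, partition $\R$ into intervals of width $2\Delta_S$ and let $r_S(\chi) \in \Z$ record the index of the interval containing $\chi(S)$; let $r(\chi) = (r_S(\chi))_{S\in \CC}$ be the combined ``coarse signature.'' The eventual partial coloring will come from the difference of two sign vectors with the same signature.

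The first step is the entropy estimate $H(r_S) \le g(\Delta_S/\sqrt{|S|})$ in an appropriate logarithm base. Writing $\lambda = \Delta_S/\sqrt{|S|}$, the sum $\chi(S) = \sum_{x\in S}\chi(x)$ has mean $0$, variance $|S|$, and is sub-Gaussian. For $\lambda \ge 2$, Hoeffding gives $\Pr[|\chi(S)| \ge k\Delta_S] \le 2e^{-k^2\lambda^2/2}$, so the intervals with $|k|\ge 1$ contribute total entropy of order $e^{-\lambda^2/4}$, matching the top branch of $g$. For $0 < \lambda < 2$, the standard deviation $\sqrt{|S|}$ only covers $O(1/\lambda)$ intervals of non-negligible mass, giving $H(r_S) = O(\log(1 + 2/\lambda))$, matching the bottom branch. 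The constants in \eqref{eqn:def-g} are precisely the uniform bound arising from these two regimes.

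The second step combines subadditivity of entropy with hypothesis \eqref{eqn:partial-coloring-lemma}: one has $H(r) \le \sum_{S\ne \emptyset}H(r_S) \le n/5$, so a standard pigeonhole produces a family $\mathcal{F}$ of at least $2^{4n/5}$ sign vectors that all share a common signature $v$. Since any Hamming ball of radius $n/5$ in $\{-1,+1\}^{\Omega}$ has size at most $2^{H_2(1/5)n} < 2^{4n/5}$ (where $H_2$ denotes binary entropy, and $H_2(1/5) \approx 0.722$), the set $\mathcal{F}$ cannot lie inside a single such ball; hence there exist $\chi_1, \chi_2 \in \mathcal{F}$ differing in at least $n/5$ coordinates. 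Set $\chi := (\chi_1 - \chi_2)/2 \in \{-1,0,1\}^{\Omega}$. This is a partial coloring with at least $n/5 \ge n/10$ nonzero entries, and since $\chi_1(S)$ and $\chi_2(S)$ lie in the same interval of width $2\Delta_S$, we get $|\chi(S)| = \tfrac{1}{2}|\chi_1(S) - \chi_2(S)| \le \Delta_S$ for every $S$, as required.

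The main obstacle is the entropy bound for $r_S$: the sum $\chi(S)$ is discrete rather than Gaussian, and the rounding to a grid of spacing $2\Delta_S$ interacts subtly with this discreteness, so one must combine a sub-Gaussian tail inequality with a uniform cap on the mass in each bucket. Once the constants $10$, the exponent $\lambda^2/4$, and the $\log(1+2\lambda^{-1})$ factor in \eqref{eqn:def-g} are tuned to match the bound $n/5$ in \eqref{eqn:partial-coloring-lemma} against the Hamming-ball volume $2^{H_2(1/5)n}$, the remaining pigeonhole and difference-argument steps are essentially routine. This calibration of constants, rather than any conceptual novelty, is where the proof's technical weight lies, which is why the formal statement references \cite{MS} and the textbook exposition \cite{Mat}.
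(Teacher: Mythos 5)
The paper does not actually prove Lemma~\ref{lem:partial-coloring-lemma}; it cites it from Section~4.6 of Matou\v{s}ek's book \cite{Mat} (the result being due to Matou\v{s}ek and Spencer \cite{MS}), so there is no in-paper proof to compare against. Your sketch is the standard entropy-method argument behind the cited result, and it is correct in outline, but the quantitative bookkeeping in the pigeonhole step has a gap.

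You claim the partial coloring has ``at least $n/5 \geq n/10$ nonzero entries,'' and to get there you assert $|\mathcal F| \geq 2^{4n/5}$ and compare against a Hamming ball of radius $n/5$. The bound $|\mathcal F| \geq 2^{4n/5}$ requires the subadditivity estimate $H(r) \leq n/5$ to be measured in \emph{bits}. But $g$ in \eqref{eqn:def-g} is written with the paper's convention $\log = \ln$, so the natural reading of $H(r_S) \leq g(\Delta_S/\sqrt{|S|})$ gives $H(r) \leq n/5$ in nats, whence only $|\mathcal F| \geq 2^n e^{-n/5} = 2^{(1-1/(5\ln 2))n} \approx 2^{0.711n}$. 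That is \emph{smaller} than the Hamming-ball volume $2^{H_2(1/5)n} \approx 2^{0.722n}$ you invoke, so the radius-$n/5$ comparison fails and the conclusion ``$\geq n/5$ nonzeros'' does not follow from your estimates. The fix is to prove only what the lemma states: take radius $n/10$, whose ball volume $2^{H_2(1/10)n} \approx 2^{0.469n}$ lies safely below $|\mathcal F|$ in either normalization, and the rest of your difference argument goes through unchanged. Separately, the entropy estimate $H(r_S) \leq g(\lambda)$ itself is asserted via a heuristic Hoeffding-plus-cap sketch rather than proved; the constants $10$, the exponent $\lambda^2/4$ (not Hoeffding's $\lambda^2/2$), the branch point $\lambda = 2$, and the $\log(1+2\lambda^{-1})$ form all require verification in a base consistent with the pigeonhole step. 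Since that computation is precisely what \cite{Mat} supplies and the paper elects to cite rather than reproduce, deferring to the reference is reasonable, but a self-contained proof would need to carry it out.
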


	We apply Lemma~\ref{lem:partial-coloring-lemma} to the set system $(X, \CC_X)$ defined in Section~\ref{sec:decomposition}. In \eqref{eqn:def-g} we intentionally choose $g$ to be monotonically decreasing. We further show the following property of $g$.
	\begin{lemma}\label{lem:choice-of-b}
	Let $d\in \N$ and $c = 10d+2400$. Let $K$ be a positive real number, and let $b: \N\to (0, \infty)$ be defined as
	\begin{equation}\label{eqn:def-b-as-K}
	b(s) = \begin{cases}c\sqrt{s} \cdot \left(sK^{-\frac{1}{d+1}}\right)^{-1} &\mbox{if } s \geq K^{\frac{1}{d+1}} \\ c\sqrt{s} \cdot \left(sK^{-\frac{1}{d+1}}\right)^{-0.1} &\mbox{if } s  <  K^{\frac{1}{d+1}}\end{cases}.\end{equation}
	Then for $g$ as defined in \eqref{eqn:def-g}, we have
	\[\sum_{i = 0}^\infty \frac{K}{2^{i(d+1)}}g\left(\frac{b(2^i)}{2^{i/2}}\right) \leq 1.\]
	\end{lemma}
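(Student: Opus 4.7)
The plan is to simplify via a change of variables and then split the sum into three regimes based on the size of $\lambda_i := b(2^i)/2^{i/2}$ relative to the threshold $2$ appearing in the definition of $g$. Setting $T = K^{1/(d+1)}$ and $r_i = T/2^i$, the weight $K/2^{i(d+1)}$ becomes $r_i^{d+1}$, and the definition of $b$ in \eqref{eqn:def-b-as-K} simplifies to $\lambda_i = cr_i$ when $r_i \leq 1$ and $\lambda_i = cr_i^{0.1}$ when $r_i > 1$. Thus the sum to bound equals $S = \sum_{i \geq 0} r_i^{d+1} g(\lambda_i)$, where the $r_i$ form a geometric progression with ratio $1/2$.

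I would partition the indices into three regimes according to the value of $r_i$: (A) $r_i > 1$, where $\lambda_i = cr_i^{0.1} \geq c$, well above $2$; (B) $2/c \leq r_i \leq 1$, where $\lambda_i = cr_i \geq 2$; and (C) $r_i < 2/c$, where $\lambda_i < 2$. In (A) and (B) the value of $g$ is given by the exponential branch, while in (C) it is given by the logarithmic branch.

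For (A), the $r_i$ in this range can be written as $2^k r'$ for $k \geq 0$ and a fixed $r' \in (1,2]$ (namely the smallest $r_i$ exceeding $1$). Using $(2^k r')^{d+1} \leq 2^{(k+1)(d+1)}$ and $(2^k r')^{0.2} \geq 2^{0.2k}$, the contribution is at most $10 \cdot 2^{d+1} \sum_{k \geq 0} 2^{k(d+1)} e^{-c^2 \cdot 2^{0.2k}/4}$, which is tiny because $c^2 \geq 2400^2$ makes the exponential decay dominate. For (C), $\log(1+x) \leq x$ gives $g(\lambda_i) \leq 20/(cr_i)$, and hence a contribution of at most $\frac{20}{c} \sum_{r_i < 2/c} r_i^d \leq \frac{40 \cdot 2^d}{c^{d+1}}$, also tiny. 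For (B), substituting $u_m := cr_i$, the $u_m$'s lie in $[2,c]$ and are in geometric progression of ratio $1/2$; the contribution becomes $\frac{10}{c^{d+1}} \sum_m u_m^{d+1} e^{-u_m^2/4}$. Because $u \mapsto u^{d+1} e^{-u^2/4}$ is unimodal with maximum near $u = \sqrt{2(d+1)}$ and decays rapidly on either side, comparing the geometric sum with $\int_0^\infty u^d e^{-u^2/4}\,du = 2^d \Gamma((d+1)/2)$ gives a bound of order $2^d \Gamma((d+1)/2)/c^{d+1}$, which is again easily under $1/3$ since $c \geq 10d$.

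The main (mild) obstacle is the arithmetic bookkeeping that verifies $c = 10d + 2400$ is in fact large enough for all three contributions to fit inside $1$ simultaneously. The linear term $10d$ is essential so that $c^{d+1}$ outpaces combinatorial growth like $2^d \Gamma((d+1)/2)$ as $d \to \infty$, while the large additive constant $2400$ absorbs the fixed overheads (the $2^{d+1}$ in (A), and the constant factors from the integral comparison in (B)) and handles the small-$d$ regime. No new ideas are needed beyond careful estimates of these three elementary sums.
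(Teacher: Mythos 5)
Your change of variables ($r_i = K^{1/(d+1)}/2^i$, which is the reciprocal of the paper's $\tau_i$) and the top-level split between $r_i > 1$ and $r_i \le 1$ are the same as the paper's, but within $r_i \le 1$ you take a genuinely different route. You sub-split according to which branch of $g$ applies (at $r_i = 2/c$) and evaluate the exponential branch of $g$ directly in regime (B), bounding the resulting geometric sum of $u \mapsto u^{d+1}e^{-u^2/4}$ by comparison with its peak/integral. The paper avoids ever touching the exponential branch in this range: it observes that for $c > 1$ and $\tau_i \ge 1$, monotonicity of $g$ gives $g(c\tau_i^{-1}) \le g(\tau_i^{-1}) = 10\log(1 + 2\tau_i)$ (the argument $\tau_i^{-1}$ is always $\le 2$), and then handles this single logarithmic expression by a further split at $\tau_i = 240$, never needing Stirling-type estimates. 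In the $r_i > 1$ regime (A) you also proceed differently: you use direct geometric decay of $2^{k(d+1)}e^{-c^2 2^{0.2k}/4}$, whereas the paper first linearizes the exponential decay via $e^x \ge x^t/t!$ with $t = 5d + 10$ and then calibrates $c$ against the resulting factorial constant $c_1 = 2(5d+10)!$. Your method is arguably more transparent in (A) (no auxiliary $t$, no factorial bookkeeping), and arguably less transparent in (B) (you need to justify a unimodal-sum-versus-integral comparison, which is not a straight Riemann-sum bound since $u^{d+1}e^{-u^2/4}$ is not monotone — bounding the geometric sum by a constant times the max term would be cleaner and equally effective). The constants you obtain are of the same flavor: $10d$ to beat the $\Gamma$-growth in $d$, plus a large additive constant for small $d$ and fixed overheads. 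The proposal is sound; the remaining work is the routine arithmetic you already identify as the "mild obstacle."
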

	\begin{proof}
	Let $s_i = 2^i$ and $\tau_i = 2^iK^{-\frac{1}{d+1}}$, where $i$ takes nonnegative integer values. Now we may rewrite 
	\[\frac{b(2^i)}{2^{i/2}} = \begin{cases}c\tau_i^{-1} &\mbox{if } \tau_i \geq 1 \\ c\tau_i^{-0.1} &\mbox{if } \tau_i < 1\end{cases}.\]
	Since the right hand side only depends on $\tau_i$, we denote it as $\lambda(\tau_i)$. Therefore we have
	\begin{equation}\label{eqn:rewrite-sum-as-tau}\sum_{i = 0}^\infty \frac{K}{2^{i(d+1)}}g\left(\frac{b(2^i)}{2^{i/2}}\right) = \sum_{i=0}^\infty \tau_i^{-d-1}g(\lambda(\tau_i)) = \sum_{\tau_i < 1}\tau_i^{-d-1}g(c\tau_i^{-0.1}) + \sum_{\tau_i \geq 1}\tau_i^{-d-1}g(c\tau_i^{-1}).\end{equation}
	We bound two terms on the right hand side of \eqref{eqn:rewrite-sum-as-tau} separately. For the first term, note that $\{\tau_i < 1\}$ can be seen as a geometric sequence with ratio $1/2$. For any $x > 0$ and positive integer $t$, we have $e^x \geq \frac{x^t}{t!}$. Thus by setting $x = \tau_i^{-0.2}$ and $t = 5d+10$ we get that the series
	\[\sum_{\tau_i < 1}\tau_i^{-d-1}e^{-\tau_i^{-0.2}} \leq \sum_{\tau_i < 1}\tau_i^{-d-1}\cdot (5d+10)!\tau_i^{0.2\cdot (5d+10)} = (5d+10)!\sum_{\tau_i<1}\tau_i \leq 2(5d+10)!.\]
	We denote $c_1 = 2(5d+10)!$. Note that $c = 10d+2400$ satisfies $c^2/4 > 1 + \log (20c_1)$. For $\tau < 1$, we have $c\tau^{-0.1} > 2$, so $g(c\tau^{-0.1})$ uses the branch $g(\lambda) = 10e^{-\lambda^2/4}$. Therefore
	\[g(c\tau^{-0.1}) = 10e^{-\frac{c^2\tau^{-0.2}}{4}} \leq 10e^{-(1+\log (20c_1))\tau_i^{-0.2}} \leq 10e^{-\tau^{-0.2} - \log 20c_1} = \frac{1}{2c_1}e^{-\tau^{-0.2}}.\]
	Using this, we may bound the first term on the right hand side of \eqref{eqn:rewrite-sum-as-tau} as following
	\begin{equation}\label{eqn:tau-first-term}
	\sum_{\tau_i < 1}\tau_i^{-d-1}g(c\tau_i^{-0.1}) \leq \sum_{\tau_i < 1} \tau_i^{-d-1}\cdot \frac{1}{2c_1}e^{-\tau_i^{-0.2}} \leq \frac{1}{2c_1}\cdot c_1 = \frac{1}{2}.
	\end{equation}
	Now we bound the second term on the right hand side of \eqref{eqn:rewrite-sum-as-tau}. As $\{\tau_i \geq 1\}$ forms a geometric sequence with ratio $2$ and $\log(1+2\tau_i)\leq 3\tau_i$ when $\tau_i \geq T > 1$, we have for $T = 240$, 
	\[\sum_{\tau_i > T} \tau_i^{-d-1}\log(1+2\tau_i) \leq \sum_{\tau_i > T}\tau_i^{-d-1} \cdot 3\tau_i = 3\sum_{\tau_i > T}\tau_i^{-d} \leq 3\sum_{\tau_i > T}\tau_i^{-1} \leq \frac{6}{T} = \frac{1}{40}.\]
For any $c = c(d) > 1$ and $\tau\geq T$, as $g$ is monotonically decreasing, we have $g(c\tau^{-1}) \leq g(\tau^{-1}) = 10\log(1+2\tau)$ where we use the branch $g(\lambda) = 10\log(1+2\lambda^{-1})$ as $\tau^{-1} < 2$. Hence
	\begin{equation}\label{eqn:tau-second-term-second-half}\sum_{\tau_i \geq T}\tau_i^{-d-1}g(c\tau_i^{-1}) \leq \sum_{\tau_i \geq T}\tau_i^{-d-1} \cdot 10\log(1+2\tau_i) \leq 10\cdot \frac{1}{40} = \frac{1}{4}.\end{equation}
	Finally, for $\tau_i < T$, we have $g(c\tau^{-1}) \leq g(cT^{-1})$. As $g(10)\leq \frac{1}{8}$ and $c > 2400 = 10T$, $g(cT^{-1}) \leq \frac{1}{8}$. Consequently,
	\begin{equation}\label{eqn:tau-second-term-first-half}
	\sum_{1\leq \tau_i < T}\tau_i^{-d-1}g(c\tau_i^{-1}) \leq \sum_{1\leq \tau_i < T}\tau_i^{-d-1}\cdot \frac{1}{8} < \frac{1}{4}.
	\end{equation}
	Here in the last inequality we use that $\{\tau_i: 1\leq \tau_i < T\}$ forms a geometric series with ratio $2$ and the initial term is at least $1$.

Substituting the bounds \eqref{eqn:tau-first-term}, \eqref{eqn:tau-second-term-second-half}, and \eqref{eqn:tau-second-term-first-half} into \eqref{eqn:rewrite-sum-as-tau}, we obtain the desired inequality. 

	\end{proof}

	We choose the function $b$ of the form specified in Lemma~\ref{lem:choice-of-b} as it has a good summation property over powers of $2$. This is illustrated by the following lemma.
	\begin{lemma}\label{lem:sum-b}
	Let $d$ be a positive integer, and $c, K$ be positive real numbers. Let $b: \N\to (0, \infty)$ be the function defined in \eqref{eqn:def-b-as-K}. Suppose that $u < v$ are positive real numbers. Then
	\[\sum_{s: s = 2^t \in [u, v]}b(s) \leq 5cK^\frac{1}{2d+2}\min\left(\left(vK^{-\frac{1}{d+1}}\right)^{0.4}, \left(uK^{-\frac{1}{d+1}}\right)^{-0.5}\right).\]
	\end{lemma}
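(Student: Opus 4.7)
The plan is to normalize by substituting $\tau_t = 2^t K^{-\frac{1}{d+1}}$, as in the proof of Lemma~\ref{lem:choice-of-b}. A direct calculation shows
\[ b(2^t) \;=\; c K^{\frac{1}{2d+2}}\cdot \phi(\tau_t), \qquad\text{where}\qquad \phi(\tau) := \begin{cases}\tau^{-1/2} & \text{if }\tau\ge 1,\\ \tau^{0.4} & \text{if }\tau<1.\end{cases} \]
After setting $U := uK^{-\frac{1}{d+1}}$ and $V := vK^{-\frac{1}{d+1}}$, the desired inequality becomes
\[ \sum_{\tau_t\in[U,V]}\phi(\tau_t) \;\le\; 5\,\min\bigl(V^{0.4},\;U^{-0.5}\bigr). \]

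The key observation I would exploit is that in both regimes, the exponent of the \emph{other} regime gives a valid upper bound on $\phi$: for $\tau\ge 1$, $\tau^{-0.5}\le \tau^{0.4}$, and for $\tau<1$, $\tau^{0.4}\le \tau^{-0.5}$. Equivalently $\phi(\tau)=\min(\tau^{0.4},\tau^{-0.5})$ for all $\tau>0$. This decouples the two regimes and lets me bound the entire sum by either of the two monotone geometric series, rather than having to split the range at $\tau=1$ and lose a factor.

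Concretely, I would produce two separate upper bounds. First, using $\phi(\tau_t)\le \tau_t^{0.4}$ and the fact that the $\tau_t$'s are powers of $2$ bounded above by $V$, the sequence $(\tau_t^{0.4})$ is dominated by a geometric progression with ratio $2^{-0.4}$ starting from $V^{0.4}$, giving
\[ \sum_{\tau_t\in[U,V]}\phi(\tau_t) \;\le\; \frac{V^{0.4}}{1-2^{-0.4}}. \]
Second, using $\phi(\tau_t)\le \tau_t^{-0.5}$ and dominating by a geometric progression with ratio $2^{-1/2}$ starting from $U^{-0.5}$, I get
\[ \sum_{\tau_t\in[U,V]}\phi(\tau_t) \;\le\; \frac{U^{-0.5}}{1-2^{-1/2}}. \]

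The last step is purely numerical: both constants $1/(1-2^{-0.4})$ and $1/(1-2^{-1/2})$ are below $5$ (approximately $4.13$ and $3.42$ respectively). Taking the minimum of the two bounds yields the claim. There is no real obstacle here beyond making the substitution cleanly and verifying the geometric-series constants; the only conceptual point is recognizing that writing $\phi$ as a \emph{minimum} of two power functions is what allows the single-sided geometric estimates on each side.
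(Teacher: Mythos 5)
Your proof is correct and takes essentially the same route as the paper: the paper likewise observes that $b(s)$ is the pointwise minimum of the two power functions $c\sqrt{s}(sK^{-1/(d+1)})^{-1}$ and $c\sqrt{s}(sK^{-1/(d+1)})^{-0.1}$, and then bounds the sum separately by the two geometric series (with ratios $2^{-0.5}$ and $2^{-0.4}$ respectively), each converging to a constant below $5$. Your $\tau$-normalization is a cosmetic repackaging of the same computation.
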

	\begin{proof}
Here we assume that $s$ takes value over powers of $2$. Note that
	\[b(s) = \min\left(c\sqrt{s}\cdot \left(sK^{-\frac{1}{d+1}}\right)^{-1}, c\sqrt{s}\cdot \left(sK^{-\frac{1}{d+1}}\right)^{-0.1} \right).\]
	Using that $b(s) \leq c\sqrt{s}\cdot \left(sK^{-\frac{1}{d+1}}\right)^{-0.1}$, we get
	\[\sum_{u\leq s \leq v} b(s) \leq cK^{\frac{1}{10(d+1)}} \sum_{u\leq s \leq v}s^{0.4} \leq cK^{\frac{0.1}{d+1}}v^{0.4}\left(\sum_{j \leq 0}2^{0.4j}\right) \leq 5cK^\frac{1}{2d+2}\left(vK^{-\frac{1}{d+1}}\right)^{0.4}.\]
	%In the last inequality we use that $\frac{1}{1-2^{-0.4}} \leq 5$. 
	Similarly using $b(s) \leq c\sqrt{s}\cdot \left(sK^{-\frac{1}{d+1}}\right)^{-1}$, we have
	\[\sum_{u\leq s \leq v} b(s) \leq cK^{\frac{1}{d+1}} \sum_{u\leq s \leq v}s^{-0.5} \leq cK^{\frac{1}{d+1}}u^{-0.5}\left(\sum_{j \geq 0}2^{-0.5j}\right) \leq 5cK^\frac{1}{2d+2}\left(uK^{-\frac{1}{d+1}}\right)^{-0.5}.\]
	%where in the last inequality we use $\frac{1}{1-2^{-0.5}} \leq 5$.
	\end{proof}
	
	Combining the lemmas above with Corollary~\ref{cor:counting-subsets-1}, we derive below  Proposition~\ref{prop:first-partial-coloring}, which is slightly weaker than what we need to prove Theorem~\ref{thm:main}. Indeed, if we iteratively apply Proposition~\ref{prop:first-partial-coloring} to the remaining uncolored elements, we get $\disc(\CA_d)=O_d(N^{\frac{d}{2d+2}}\log N)$. 
	\begin{proposition}\label{prop:first-partial-coloring}
	Let $d\in \N$ and $c = 500d+10^6$. For any $X\subseteq [N]^d$, there exists a partial coloring $\chi: X \to \{-1, 0, 1\}$ that assigns $\pm1$ to at least $|X|/10$ elements in $X$ such that
	\[\max_{A_0\in \CA_d} |\chi(A_0\cap X)| \leq cN^{\frac{d}{2d+2}}.\]
	\end{proposition}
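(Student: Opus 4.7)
The plan is to apply the partial coloring lemma (Lemma~\ref{lem:partial-coloring-lemma}) to the set system $(X, \CC_X)$ from Section~\ref{sec:decomposition}, with the thresholds set as $\Delta_S = b(|S|)$, where $b$ is the function from Lemma~\ref{lem:choice-of-b} with a suitable value of the parameter $K$. The resulting partial coloring $\chi$ will satisfy $|\chi(S)| \le b(|S|)$ for every canonical arithmetic progression $S \in \CC_X$; feeding this into the decomposition Lemma~\ref{lem:decomposition} then yields $|\chi(A_0 \cap X)| \le 2 \sum_{s=2^t} b(s)$ for every $A_0 \in \CA_d$, and this sum will be shown to be $O_d(N^{d/(2d+2)})$ via Lemma~\ref{lem:sum-b}.

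The parameter $K$ must be chosen so that the premise \eqref{eqn:partial-coloring-lemma} is satisfied. Corollary~\ref{cor:counting-subsets-1} gives $f(s, X) \le 5^d N^d |X|/s^{d+1}$ for $1 \le s \le N$, while for $s > N$ the set $U^d(X, \vb, s)$ is empty for every nonzero $\vb$ (each coordinate of $\vb$ would have to lie in $(-N/(s-1), N/(s-1))$, forcing $\vb = \vzero$), so $f(s, X) = 0$. Setting $K = 5^{d+1} N^d$ so that $5^d N^d/|X| \cdot $ bound aligns with Lemma~\ref{lem:choice-of-b}, we obtain
\[
\sum_{S \in \CC_X : S \ne \emptyset} g\!\left(\tfrac{b(|S|)}{\sqrt{|S|}}\right)
\;=\; \sum_{i \ge 0} f(2^i, X)\, g\!\left(\tfrac{b(2^i)}{2^{i/2}}\right)
\;\le\; \frac{|X|}{5}\sum_{i \ge 0} \frac{K}{2^{i(d+1)}}\, g\!\left(\tfrac{b(2^i)}{2^{i/2}}\right)
\;\le\; \frac{|X|}{5}
\]
by Lemma~\ref{lem:choice-of-b}. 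Lemma~\ref{lem:partial-coloring-lemma} then produces the desired partial coloring, which is nonzero on at least $|X|/10$ elements and satisfies $|\chi(S)| \le b(|S|)$ for every $S \in \CC_X$.

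To finish, we estimate $\sum_{s = 2^t} b(s)$ by splitting the sum at the threshold $s_\ast = K^{1/(d+1)}$ and applying Lemma~\ref{lem:sum-b} on each side. On $[1, s_\ast]$ the min in Lemma~\ref{lem:sum-b} evaluates to $(s_\ast K^{-1/(d+1)})^{0.4} = 1$, and on $[s_\ast, \infty)$ it evaluates to $(s_\ast K^{-1/(d+1)})^{-0.5} = 1$, so each contributes at most $5c K^{1/(2d+2)}$, giving $\sum_{s=2^t} b(s) \le 10 c K^{1/(2d+2)}$. Since $K^{1/(2d+2)} = 5^{(d+1)/(2d+2)} N^{d/(2d+2)} = \sqrt{5}\, N^{d/(2d+2)}$ and $c = 10d + 2400$ from Lemma~\ref{lem:choice-of-b}, Lemma~\ref{lem:decomposition} gives
\[
|\chi(A_0 \cap X)| \;\le\; 2 \sum_{s = 2^t} b(s) \;\le\; 20\sqrt{5}\,(10d + 2400)\, N^{d/(2d+2)} \;\le\; (500 d + 10^6)\, N^{d/(2d+2)}
\]
for every $A_0 \in \CA_d$, which is the desired bound.

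No step is a serious obstacle: the essential design choices—the shape of $b$ with two regimes separated at $K^{1/(d+1)}$, and the summation properties used by the partial coloring lemma—are already encoded in Lemmas~\ref{lem:choice-of-b} and~\ref{lem:sum-b}, while Corollary~\ref{cor:counting-subsets-1} supplies the needed set-counting estimate. The only substantive task is to match the scaling $K = 5^{d+1} N^d$ to these inputs, verify that $U^d(X, \vb, s)$ vanishes outside the range covered by the corollary, and track constants.
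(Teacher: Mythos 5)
Your proof is correct and follows essentially the same route as the paper's: apply Lemma~\ref{lem:partial-coloring-lemma} to $(X, \CC_X)$ with $\Delta_S = b(|S|)$ for $b$ as in Lemma~\ref{lem:choice-of-b}, with $K = 5^{d+1}N^d$ so that Corollary~\ref{cor:counting-subsets-1} feeds into the premise, then use Lemma~\ref{lem:decomposition} and Lemma~\ref{lem:sum-b} (split at $K^{1/(d+1)}$) to bound the total. The only small difference is that you explicitly verify $f(s, X) = 0$ for $s > N$ rather than truncating the sum at $s \leq N$ as the paper does, but this is the same observation stated differently; the parameter choices and constant tracking match the paper's.
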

	\begin{proof}
	Here we treat $d$ as a constant. Suppose that $|X| = m$. Let $b: \N\to (0, \infty)$ be determined later. We want to apply Lemma \ref{lem:partial-coloring-lemma} to the set system $(X, \CC_X)$ to find a partial coloring $\chi: X\to \{0, \pm 1\}$ that assigns $\pm 1$ to at least $m/10$ elements, and that $|\chi(S)| \leq b(|S|)$ for any $S\in \CC_X$. By Lemma~\ref{lem:partial-coloring-lemma}, it suffices to ensure that $b$ satisfies the inequality
	\begin{equation}\label{eqn:prop-first-target}
	\sum_{s: s = 2^t \leq N} f(s, X) g\left(\frac{b(s)}{\sqrt{s}}\right) \leq m/5.\end{equation}
	By Corollary~\ref{cor:counting-subsets-1}, we know that $f(s, X) \leq 5^d \frac{N^dm}{s^{d+1}}.$ It now suffices to show
	\begin{equation}\label{eqn:prop-first-target-converted}
	\sum_{s: s = 2^t \leq N}  5^d \frac{N^dm}{s^{d+1}} g\left(\frac{b(s)}{\sqrt{s}}\right) \leq m/5.
	\end{equation}
	Let $K = 5^{d+1}N^d$. By Lemma~\ref{lem:choice-of-b}, if we set $b$ as in \eqref{eqn:def-b-as-K} with $c_1 = 10d+2400$
	\begin{equation}\label{eqn:prop-first-def-b}
	b(s) = \begin{cases}c_1\sqrt{s} \cdot \left(sK^{-\frac{1}{d+1}}\right)^{-1} &\mbox{if } s \geq K^{\frac{1}{d+1}} \\ c_1\sqrt{s} \cdot \left(sK^{-\frac{1}{d+1}}\right)^{-0.1} &\mbox{if } s  <  K^{\frac{1}{d+1}}\end{cases},
	\end{equation}
	then \eqref{eqn:prop-first-target-converted} is satisfied. Therefore by Lemma~\ref{lem:partial-coloring-lemma}, we know that there exists a partial coloring $\chi$ that assigns $\pm 1$ to at least $m/10$ elements, and that $|\chi(S)| \leq b(|S|)$ for any $S\in \CC_X$. By Lemma~\ref{lem:decomposition}, we know that for any $A\in \CA_X$, $|\chi(A)| \leq 2\sum_{s:s=2^t}b(s)$. Now we have
	\[2\sum_{s: s=2^t \leq N}b(s) \leq 2\sum_{s: s=2^t \in [1, K^{\frac{1}{d+1}}]} b(s) +  2\sum_{s: s=2^t \in [K^{\frac{1}{d+1}}, N+1]} b(s) \leq 20c_1K^\frac{1}{2d+2}.\]
In the second inequality above we use Lemma~\ref{lem:sum-b} for $(u, v) = (1, K^{\frac{1}{d+1}})$ and $(u, v) = (K^{\frac{1}{d+1}}, N)$ respectively. Note that $K^\frac{1}{2d+2} = 5^{\frac{d+1}{2d+2}}N^{\frac{d}{2d+2}}=\sqrt{5}N^{\frac{d}{2d+2}}$. Hence we conclude that we can always find partial coloring $\chi$ that assigns $\pm 1$ to at least $m/10$ elements in $X$, and satisfies
	\[\max_{A_0\in \CA_d} |\chi(A_0\cap X)| = \max_{A\in \CA_X} |\chi(A)| \leq cN^{\frac{d}{2d+2}},\]
	where $c = 20\sqrt{5}c_1 < 500d + 10^6$.
	\end{proof}
	
	Not surprisingly, to improve on the bound above, we need to improve on Corollary~\ref{cor:counting-subsets-1}. In particular, we will show in Section~\ref{sec:geometry} that the following holds.
	
	\begin{restatable}{lemma}{counting}\label{lem:counting-subsets-3}
	There exists an absolute constant $C_0$ such that the following holds. Let $d$ be a positive integer. Given positive integers $N_1, N_2, \dots, N_d$ satisfying $N_1\cdots N_d \leq (\min_{1\leq i\leq d} N_i)^{d+1-\delta}$ for some $\delta\in (0, 1]$, suppose that $X\subseteq [N_1]\times \cdots \times [N_d]$ is of size $m$. Letting $\rho = \frac{m}{N_1\cdots N_d}$, if integer $s$ satisfies $(N_1\cdots N_d)^{\frac{1}{d+1}}\rho^\frac{\delta}{4^d(d+1)} \leq s \leq (\min_{1\leq i\leq d} N_i)\rho^\beta$ for some $\beta \in (0, 1/2)$, then
	\begin{equation}\label{eqn:counting-subsets-3}\sum_{\vb\in \Z^d\setminus \{\vzero\}} |U^d(X, \vb, s)| \leq C_02^{d^3}5^d\frac{mN_1N_2\cdots N_d}{s^d} \cdot \rho^{\frac{\min(\beta, \delta)}{4^d\cdot(d+2)!}}.\end{equation}
	\end{restatable}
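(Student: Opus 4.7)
My approach would be to induct on the dimension $d$, since the bound's denominators $4^d$ and $(d+2)!$ strongly suggest a recursive structure. For the base case $d=1$, fix $b\in \Z\setminus\{0\}$: each residue class modulo $b$ has at most $\lceil N_1/|b|\rceil$ elements in $[N_1]$, and at most $\lfloor m/s\rfloor$ classes contain $\geq s$ elements of $X$, giving $|U^1(X,b,s)|\leq mN_1/(s|b|)$. Summing over $|b|\leq N_1/s$ gives an $O(mN_1\log(N_1/s)/s)$ estimate; to remove the log factor and gain the power of $\rho$, I would split $|b|$ into dyadic ranges and use a second-moment type count on $s$-term progressions in $X$ with common difference $b$, which for small $\rho$ is much smaller than the trivial bound on average.

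For the inductive step, I would reduce the $d$-dimensional problem to a $(d-1)$-dimensional one geometrically. For each primitive $\vb\in \Z^d\setminus\{\vzero\}$, the residue classes of $\vb$ in $\Z^d$ are cosets of $\Z\vb$, and the set of cosets inherits the structure of the quotient lattice $\Z^d/\Z\vb\cong \Z^{d-1}$. Using Minkowski's second theorem together with LLL, I would obtain a well-conditioned integer basis of this quotient lattice whose basis vectors have lengths equal (up to $d$-dependent factors) to the successive minima. An appropriate change of coordinates then embeds the residue-class counting problem into a $(d-1)$-dimensional grid whose side lengths are comparable to the successive minima scaled by the $N_i$'s, and projecting $X$ yields a subset of that grid with density $\asymp_d \rho$. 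Applying the inductive hypothesis produces the required bound, provided the new grid still satisfies an almost-cube condition with a comparable $\delta$; by a case analysis based on the shape of $\vb$, this condition will hold after replacing $\delta$ by a $\delta'$ of order $\delta/d$ or better.

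The main obstacle lies in controlling how the parameters degrade through the induction. The factor $4^d$ is expected to arise from a dichotomy applied at each level---splitting $\vb$'s into those whose reduced density is above a threshold of the form $\rho^{c/d}$ (handled by a trivial count) and those below (handled by induction), losing roughly a factor of $4$ in the exponent each time. The factor $(d+2)!$ comes from tracking the decay of $\delta$ and $\beta$ across levels: each reduction replaces $\delta$ by approximately $\delta/(d+2)$, compounding multiplicatively. A further subtlety is handling non-primitive $\vb=k\vb'$: one must group contributions by the primitive direction $\vb'$ and sum a geometric series in $k$, which requires the observation that $|U^d(X,k\vb',s)|\leq |U^d(X,\vb',ks)|$. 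Finally, the cases where the LLL-reduced basis of $\Z^d/\Z\vb$ is unusually degenerate must be isolated and treated via Minkowski's theorem, since in that regime the induction on $d$ cannot be invoked directly and one instead argues by embedding into a still lower-dimensional subproblem.
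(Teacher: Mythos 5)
Your high-level roadmap---induction on $d$, a projection of $\mathbb{Z}^d/\mathbb{Z}\vb$ to $\mathbb{Z}^{d-1}$ controlled by LLL and Minkowski, an ``almost-cube'' condition propagating with degraded $\delta$, and a threshold dichotomy on $\vb$'s---matches the paper's proof in spirit. But the inductive step as you describe it has a genuine structural gap: you never explain how the inductive hypothesis, which bounds a \emph{sum} over $(d-1)$-dimensional differences $\vb^*$, gets applied. Projecting the residue classes of a single $\vb$ to $\mathbb{Z}^{d-1}$ and invoking the ``density $\asymp_d \rho$'' of the image gives you information about $|U^d(X,\vb,s)|$ for that one $\vb$, not about $\sum_{\vb}|U^d(X,\vb,s)|$, and certainly not an application of the $(d-1)$-dimensional statement, which concerns $\sum_{\vb^*}|U^{d-1}(\cdot,\vb^*,s^*)|$. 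The paper's crucial missing ingredient is a second-moment/double-counting step \emph{in the inductive step}, not just the base case: by Cauchy--Schwarz (Lemma~\ref{lem:sum-of-intersection}),
\[
\sum_{\vb,\vb'\in B}\bigl|U^d(X,\vb,s)\cap U^d(X,\vb',s)\bigr| \;\geq\; \frac{1}{m}\left(\sum_{\vb\in B}|U^d(X,\vb,s)|\right)^2,
\]
and then, for fixed $\vb$, the inner sum over $\vb'$ is controlled by observing (Lemmas~\ref{lem:single-intersection} and~\ref{lem:sum-over-intersection-n-dim}) that the intersections $U^d(X,\vb,s)\cap U^d(X,\vb',s)$ project under $f_{\vb}$ to sets of the form $U^{d-1}\bigl(f_{\vb}(U^d(X,\vb,s)),\,f_{\vb}(\vb')-f_{\vb}(\vzero),\,s^*\bigr)$. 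Summing over $\vb'$ turns into summing over $\vb^*\in\mathbb{Z}^{d-1}\setminus\{\vzero\}$, and only then can the inductive hypothesis be invoked. Without this pairing-and-projecting mechanism, a per-$\vb$ projection argument has no way to convert the $d$-dimensional sum into the $(d-1)$-dimensional one and the induction does not close.

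A smaller but concrete error: the claimed monotonicity $|U^d(X,k\vb',s)|\leq |U^d(X,\vb',ks)|$ is false. For instance with $d=1$, $X=\{0,2,4,6,8\}$, $\vb'=1$, $k=2$, $s=5$ one has $|U^1(X,2,5)|=5$ while $|U^1(X,1,10)|=0$. The correct (and much weaker) inclusion is $U^d(X,k\vb',s)\subseteq U^d(X,\vb',s)$, which does not give the geometric-series savings you want. The paper avoids this issue entirely by building the projection $f_{\vb}$ around $\vb/\gcd(b_1,\dots,b_d)$ (Corollary~\ref{cor:projection}), so that primitivity never needs to be factored out separately.
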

	
	Assuming Lemma~\ref{lem:counting-subsets-3}, we prove the following improvement on Proposition~\ref{prop:first-partial-coloring}.
	\begin{proposition}\label{prop:partial-coloring-almost-cube}
	Let $d$ be a positive integer. There exist constants $C_d$ and $c_d$ such that the following holds. Let $N_1, N_2, \dots, N_d$ be positive integers satisfying $N_1\cdots N_d \leq (\min_{1\leq i\leq d} N_i)^{d+1-\delta}$ for some $\delta\in (0, 1]$. For any nonempty $X\subseteq [N_1]\times \cdots \times [N_d]$, there exists a partial coloring $\chi: X\to \{-1, 0, 1\}$ that assigns $\pm 1$ to at least $|X|/10$ elements in $X$, and
	\[\max_{A_0\in \mathcal{A}_\vN}|\chi(A_0\cap X)| \leq C_d(N_1\cdots N_d)^{\frac{1}{2d+2}}\cdot \left(\frac{|X|}{N_1\cdots N_d}\right)^{c_d\delta}.\]
	\end{proposition}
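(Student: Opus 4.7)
The plan is to apply the partial coloring lemma (Lemma \ref{lem:partial-coloring-lemma}) to $(X, \CC_X)$ with thresholds $\Delta_S = b(|S|)$ for a well-chosen $b:\N \to (0,\infty)$, and then use Lemma \ref{lem:decomposition} to convert the resulting bound into one on arbitrary arithmetic progressions. Write $m = |X|$, $M = N_1 \cdots N_d$, and $\rho = m/M$. The two things I need to check are (i) the entropy hypothesis $\sum_{s = 2^t} f(s,X)\, g(b(s)/\sqrt{s}) \le m/5$, and (ii) the decomposition sum $\sum_{s = 2^t} b(s) \le C_d M^{1/(2d+2)} \rho^{c_d\delta}$, after which Lemma \ref{lem:partial-coloring-lemma} and Lemma \ref{lem:decomposition} finish the proof. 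Since for $\rho$ bounded away from $0$ (in a $d$-dependent manner) the claim already follows from Proposition \ref{prop:first-partial-coloring}, I may assume $\rho < \rho_\ast$ for some small $\rho_\ast = \rho_\ast(d) \in (0,1)$.

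For (ii), I would use the function $b$ from Lemma \ref{lem:choice-of-b} but with a \emph{reduced} parameter $K' := 5^{d+1} M \cdot \rho^{2(d+1)\alpha}$ in place of $K = 5^{d+1} M$, where $\alpha = c_d\delta$ is a positive constant to be fixed. Lemma \ref{lem:sum-b} then gives $\sum_{s = 2^t} b(s) \le 20 c_1 (K')^{1/(2d+2)} = O_d(M^{1/(2d+2)} \rho^\alpha)$, matching (ii). Geometrically, this shrinks the peak of $b$ from roughly $K^{1/(2d+2)}$ at $s \approx K^{1/(d+1)}$ down to roughly $K^{1/(2d+2)} \rho^\alpha$ at $s_\ast := (K')^{1/(d+1)} = 5\, M^{1/(d+1)} \rho^{2\alpha}$.

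To verify (i), I would split the dyadic values of $s$ into three regimes using the thresholds $s_0 := M^{1/(d+1)} \rho^{\delta/(4^d (d+1))}$ and $s_1 := (\min_i N_i) \rho^{\beta}$, choosing $\beta = \Theta_d(\delta)$ (e.g.\ $\beta = 2\alpha + \delta/(d+1)^2$) so that $s_\ast$ lies comfortably in $[s_0, s_1]$ and Lemma \ref{lem:counting-subsets-3} is applicable on $[s_0, s_1]$ with improvement factor $\rho^\gamma$, where $\gamma = \min(\beta,\delta)/(4^d (d+2)!) = \Omega_d(\delta)$. On the middle regime, combining the improved count (via Lemma \ref{lem:counting-subsets-by-U}) with the identity $\sum_s g(b(s)/\sqrt{s})/s^{d+1} \le 1/K'$ (Lemma \ref{lem:choice-of-b} applied with parameter $K'$) bounds the middle contribution by $C_d m \rho^{\gamma - 2(d+1)\alpha}$; picking $\alpha \le \gamma/(4(d+1))$ and then $\rho_\ast$ small enough makes this at most $m/10$.

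The main obstacle will be the extreme regimes $s < s_0$ and $s > s_1$, on which one only has the weaker Corollary \ref{cor:counting-subsets-1}, i.e.\ $f(s,X) \le (K/5)m/s^{d+1}$. A naive bound would lose a factor $K/K' = \rho^{-2(d+1)\alpha}$ and break (i). The resolution is that the weight $g(b(s)/\sqrt{s})/s^{d+1}$ is sharply concentrated near $s = s_\ast$: once $s$ is a factor $\rho^{-\Omega_d(\delta)}$ away from $s_\ast$, the value $b(s)/\sqrt{s}$ leaves the transition region of $g$ and triggers either the super-exponential decay of the branch $g(\lambda) = 10 e^{-\lambda^2/4}$ (for $s \ll s_\ast$) or the extra $1/s^{d+1}$ decay (for $s \gg s_\ast$). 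Revisiting the estimates \eqref{eqn:tau-first-term}, \eqref{eqn:tau-second-term-second-half}, and \eqref{eqn:tau-second-term-first-half} in the proof of Lemma \ref{lem:choice-of-b} one sees that $\sum_{s < s_0} g(b(s)/\sqrt{s})/s^{d+1}$ and $\sum_{s > s_1} g(b(s)/\sqrt{s})/s^{d+1}$ are super-polynomially smaller than $1/K'$, so the factor $\rho^{-2(d+1)\alpha}$ is easily absorbed. Choosing $\alpha = c_d\delta$ with $c_d$ sufficiently small (e.g.\ $c_d \approx 1/(4^{2d}(d+2)!)$) and $\rho_\ast$ sufficiently small simultaneously validates every constraint and completes the proof.
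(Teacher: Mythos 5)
Your strategy is genuinely different from the paper's: you keep a \emph{single} bound function $b$ built from a uniformly shrunk parameter $K' = K\rho^{2(d+1)\alpha}$, whereas the paper builds a piecewise $b$ from three separate functions $b_1,b_2,b_3$, each tuned via its own $K_i$ and its own power of $s$ to the regime where it is applied (in particular $b_1$ uses exponent $1$, matching the $1/s^2$ decay of $f(s,X)$ on $N_1 < s \le N_d$, and $b_2$ uses the \emph{unshrunk} $K_2 = 15\cdot 5^d M$ on the two extreme ranges below $N_1$). Your single-$b$ approach would be cleaner if it worked, but as written it has two real gaps.

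First, the range $N_1 < s \le N_d$ is not addressed at all. Corollary~\ref{cor:counting-subsets-1} requires $s \le \min_i N_i$, so it does not apply there; the correct bound (paper's \eqref{eqn:large-s}) is $f(s,X)/|X| \le \frac{5^d M}{N_1^{d-1}}\cdot\frac{1}{s^2}$, which decays like $s^{-2}$, not $s^{-(d+1)}$. A single $b$ whose identity $\sum_s g(b(s)/\sqrt s)/s^{d+1} \le 1/K'$ is tuned to $s^{-(d+1)}$ does not control $\sum_{s>N_1} f(s,X) g(b(s)/\sqrt s)$: the relevant sum is $\frac{5^d M m}{N_1^{d-1}}\sum_{s > N_1} s^{-2} g(b(s)/\sqrt s)$, and since $g(b(s)/\sqrt s)$ grows logarithmically in $s$ past $s_\ast$, one is left with a bound of the rough form $m\,\rho^{\delta/(d+1)}\log M$, which is not $O(m)$ with a constant depending only on $d$. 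The paper avoids this entirely by using $b_1$ with $K_1 = 15\cdot 5^d M / N_1^{d-1}$ and Lemma~\ref{lem:choice-of-b} with exponent $1$.

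Second, the claim that the upper tail $\sum_{s > s_1} g(b(s)/\sqrt s)/s^{d+1}$ is super-polynomially smaller than $1/K'$ is incorrect. In the notation of Lemma~\ref{lem:choice-of-b}, this tail equals $(1/K')\sum_{\tau > \tau_{s_1}}\tau^{-d-1}g(c\tau^{-1})$, and because $g(c\tau^{-1}) \approx 10\log(1+2\tau/c)$ is slowly growing, the tail decays only polynomially, roughly like $\tau_{s_1}^{-(d+1)}\log\tau_{s_1}$. (The lower tail, which sits in the branch $g(\lambda) = 10 e^{-\lambda^2/4}$, genuinely does decay super-exponentially, so your claim is fine there.) Absorbing the loss $K/K' = \rho^{-2(d+1)\alpha}$ therefore forces $\tau_{s_1} \ge \rho^{-c'}$ with $c' > 2\alpha$. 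With your choice $\beta = 2\alpha + \delta/(d+1)^2$, the worst case $\min_i N_i = M^{1/(d+1-\delta)}$ gives $\tau_{s_1} \approx \rho^{-c'}$ with $c' = \frac{\delta^2}{(d+1)^2(d+1-\delta)} = O(\delta^2)$, whereas $2\alpha = 2c_d\delta = \Theta_d(\delta)$, so for small $\delta$ you have $c' < 2\alpha$ and the tail fails to absorb the loss. Picking $\beta$ \emph{smaller} (e.g.\ $\beta \approx \delta/(4(d+1)^2)$, which is the paper's choice) gives $c' \ge \frac{\delta}{(d+1)(d+1-\delta)} - \beta + 2\alpha$ with a leading term linear in $\delta$, which would repair this step; but this still leaves the $s>N_1$ regime unaddressed, and the uniform-$b$ viewpoint does not buy you any further simplification there compared with the paper's three-piece construction.
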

	\begin{proof}
	    The general proof strategy is the same as in the proof of Proposition~\ref{prop:first-partial-coloring}. Without loss of generality we may assume $N_1\leq \cdots \leq N_d$. We would like to find a function $b: \N \to (0, \infty)$ such that 
	\begin{equation}\label{eqn:prop-almost-cube-target}
	\sum_{s: s = 2^t \leq N_d} f(s, X) g\left(\frac{b(s)}{\sqrt{s}}\right) \leq |X|/5.\end{equation}
	Here we sum only over $s\leq N_d$ as there is no congruence class of $X\subseteq [N_1]\times \cdots \times [N_d]$ of length greater than $N_d$. 
To estimate $f(s, X)/|X|$, we know that Corollary~\ref{cor:counting-subsets-1} gives an upper bound for $1\leq s \leq N_1$. For $N_1 < s \leq N_d$, we apply Lemma~\ref{lem:counting-subsets-by-U} and Lemma~\ref{lem:counting-U-1} and get that
	\begin{equation}\label{eqn:large-s}
	    \frac{f(s, X)}{|X|} \leq \frac{1}{s}\prod_{i=1}^d\left(4\frac{N_i}{s}+1\right) \leq \frac{1}{s}\cdot 5\frac{N_d}{s}\cdot \prod_{i=1}^{d-1}5\frac{N_i}{N_1} = \frac{5^dN_1\cdots N_d}{N_1^{d-1}}\cdot \frac{1}{s^2}.
	\end{equation}
	Here we use the inequalities $4\frac{N_d}{s}+1\leq 5\frac{N_d}{s}$ as $s \leq N_d$, and $4\frac{N_i}{s}+1\leq 5\frac{N_i}{N_1}$ as $s > N_1$ and $N_i\geq N_1$. Now combining with Lemma~\ref{lem:counting-subsets-3} applied with $\beta = \frac{\delta}{4(d+1)^2}$, we derive the following inequality:
	\begin{equation}\label{eqn:piecewise-up-for-f/m}
	    \frac{f(s, X)}{|X|} \leq \begin{cases}
	    \frac{1}{s^2}\frac{5^dN_1\cdots N_d}{N_1^{d-1}}  & \mbox{if }N_1 < s \leq N_d \\
	    \frac{1}{s^{d+1}}5^dN_1\cdots N_d & \mbox{if } 1\leq s < (N_1\cdots N_d)^\frac{1}{d+1}\rho^\frac{\delta}{4^d(d+1)}\mbox{ or }N_1\rho^\frac{\delta}{4(d+1)^2}< s \leq N_1\\
	    \frac{1}{s^{d+1}}C_dN_1\cdots N_d\rho^{\frac{\delta}{4^{d+1}(d+1)^2\cdot (d+2)!}} & \mbox{if }(N_1\cdots N_d)^\frac{1}{d+1}\rho^\frac{\delta}{4^d(d+1)}\leq s \leq N_1\rho^\frac{\delta}{4(d+1)^2}
	    \end{cases}
	\end{equation}
	for $\rho := \frac{|X|}{N_1\cdots N_d}$. Here we applied \eqref{eqn:large-s} on the first range, Corollary~\ref{cor:counting-subsets-1} on the second, and Lemma~\ref{lem:counting-subsets-3} on the third with $C = C_02^{d^3}5^d$ for some absolute constant $C_0$.
	
	We denote $K_1 = 15\frac{5^dN_1\cdots N_d}{N_1^{d-1}}$, $K_2 = 15\cdot 5^dN_1\cdots N_d$, and $K_3 = 15CN_1\cdots N_d\rho^{\frac{\delta}{4^{d+1}(d+1)^2\cdot (d+2)!}}$ for simplicity. Applying Lemma~\ref{lem:choice-of-b} three times, if we define with $c_1 = 2410$ and $c_2 = 10d+2400$
	\begin{equation}\label{eqn:b1}
	    b_1(s) = \begin{cases}c_1\sqrt{s} \cdot \left(sK_1^{-\frac{1}{2}}\right)^{-1} &\mbox{if } s \geq K_1^{\frac{1}{2}} \\ c_1\sqrt{s} \cdot \left(sK_1^{-\frac{1}{2}}\right)^{-0.1} &\mbox{if } s  <  K_1^{\frac{1}{2}}\end{cases},
	\end{equation}\label{eqn:b23}
	and for $i = 2$ and $3$ 
	\begin{equation}
	    b_i(s) = \begin{cases}c_2\sqrt{s} \cdot \left(sK_i^{-\frac{1}{d+1}}\right)^{-1} &\mbox{if } s \geq K_i^{\frac{1}{d+1}} \\ c_2\sqrt{s} \cdot \left(sK_i^{-\frac{1}{d+1}}\right)^{-0.1} &\mbox{if } s  <  K_i^{\frac{1}{d+1}}\end{cases},
	\end{equation}
	then as $s$ taking values in powers of $2$, we have the following three inequalities:
	\begin{align}
	    \label{eqn:b1-entropy}
	    \sum_{N_1 < s\leq N_d}\frac{K_1}{s^2}g\left(\frac{b_1(s)}{\sqrt{s}}\right) &\leq 1,\\
	    \label{eqn:b2-entropy}
	    \sum_{1\leq s < (N_1\cdots N_d)^\frac{1}{d+1}\rho^\frac{\delta}{4^d(d+1)}}\frac{K_2}{s^{d+1}}g\left(\frac{b_2(s)}{\sqrt{s}}\right) + \sum_{N_1\rho^\frac{\delta}{4}< s \leq N_1}\frac{K_2}{s^{d+1}}g\left(\frac{b_2(s)}{\sqrt{s}}\right)&\leq 1,\\
	    \label{eqn:b3-entropy}
	    \sum_{(N_1\cdots N_d)^\frac{1}{d+1}\rho^\frac{\delta}{4^d(d+1)}\leq s \leq N_1\rho^\frac{\delta}{4}}\frac{K_3}{s^{d+1}}g\left(\frac{b_3(s)}{\sqrt{s}}\right)&\leq 1.
	\end{align}
% 	\begin{equation}\label{eqn:b1-entropy}
% 	    \sum_{N_1 < s\leq N_d}\frac{K_1}{s^2}g\left(\frac{b_1(s)}{\sqrt{s}}\right) \leq 1,
% 	\end{equation}
% 	\begin{equation}\label{eqn:b2-entropy}
% 	    \sum_{1\leq s < (N_1\cdots N_d)^\frac{1}{d+1}\rho^\frac{\delta}{4^d(d+1)}}\frac{K_2}{s^{d+1}}g\left(\frac{b_2(s)}{\sqrt{s}}\right) + \sum_{N_1\rho^\frac{\delta}{4}< s \leq N_1}\frac{K_2}{s^{d+1}}g\left(\frac{b_2(s)}{\sqrt{s}}\right)\leq 1,
% 	\end{equation}
% 	\begin{equation}\label{eqn:b3-entropy}
% 	    \sum_{(N_1\cdots N_d)^\frac{1}{d+1}\rho^\frac{\delta}{4^d(d+1)}\leq s \leq N_1\rho^\frac{\delta}{4}}\frac{K_3}{s^{d+1}}g\left(\frac{b_3(s)}{\sqrt{s}}\right)\leq 1.
% 	\end{equation}
	If we apply \eqref{eqn:piecewise-up-for-f/m} to \eqref{eqn:b1-entropy}, \eqref{eqn:b2-entropy}, and \eqref{eqn:b3-entropy}, we derive that
	\begin{align}
	    \label{eqn:b1-entropy-1}
	    \sum_{N_1 < s\leq N_d}\frac{f(s, X)}{|X|}g\left(\frac{b_1(s)}{\sqrt{s}}\right) &\leq \frac{1}{15},\\
	    \label{eqn:b2-entropy-1}
	    \sum_{1\leq s < (N_1\cdots N_d)^\frac{1}{d+1}\rho^\frac{\delta}{4^d(d+1)}}\frac{f(s, X)}{|X|}g\left(\frac{b_2(s)}{\sqrt{s}}\right) + \sum_{N_1\rho^\frac{\delta}{4(d+1)^2}< s \leq N_1}\frac{f(s, X)}{|X|}g\left(\frac{b_2(s)}{\sqrt{s}}\right)&\leq 1,\\
	    \label{eqn:b3-entropy-1}
	    \sum_{(N_1\cdots N_d)^\frac{1}{d+1}\rho^\frac{\delta}{4^d(d+1)}\leq s \leq N_1\rho^\frac{\delta}{4(d+1)^2}}\frac{f(s, X)}{|X|}g\left(\frac{b_3(s)}{\sqrt{s}}\right)&\leq 1.
	\end{align}
% 	\begin{equation}\label{eqn:b1-entropy-1}
% 	    \sum_{N_1 < s\leq N_d}\frac{f(s, X)}{|X|}g\left(\frac{b_1(s)}{\sqrt{s}}\right) \leq \frac{1}{15},
% 	\end{equation}
% 	\begin{equation}\label{eqn:b2-entropy-1}
% 	    \sum_{1\leq s < (N_1\cdots N_d)^\frac{1}{d+1}\rho^\frac{\delta}{4^d(d+1)}}\frac{f(s, X)}{|X|}g\left(\frac{b_2(s)}{\sqrt{s}}\right) + \sum_{N_1\rho^\frac{\delta}{4(d+1)^2}< s \leq N_1}\frac{f(s, X)}{|X|}g\left(\frac{b_2(s)}{\sqrt{s}}\right)\leq 1,
% 	\end{equation}
% 	\begin{equation}\label{eqn:b3-entropy-1}
% 	    \sum_{(N_1\cdots N_d)^\frac{1}{d+1}\rho^\frac{\delta}{4^d(d+1)}\leq s \leq N_1\rho^\frac{\delta}{4(d+1)^2}}\frac{f(s, X)}{|X|}g\left(\frac{b_3(s)}{\sqrt{s}}\right)\leq 1.
% 	\end{equation}
	Therefore, we may define $b$ in terms of $b_1, b_2, b_3$ as following so that \eqref{eqn:prop-almost-cube-target} is satisfied:
	\begin{equation}
	    b(s) = \begin{cases}
	    b_1(s)  & \mbox{if }N_1 < s \leq N_d \\
	    b_2(s) & \mbox{if } 1\leq s < (N_1\cdots N_d)^\frac{1}{d+1}\rho^\frac{\delta}{4^d(d+1)}\mbox{ or }N_1\rho^\frac{\delta}{4(d+1)^2}< s \leq N_1\\
	    b_3(s) & \mbox{if }(N_1\cdots N_d)^\frac{1}{d+1}\rho^\frac{\delta}{4^d(d+1)}\leq s \leq N_1\rho^\frac{\delta}{4(d+1)^2}
	    \end{cases}.
	\end{equation}
	Hence by Lemma~\ref{lem:partial-coloring-lemma}, we conclude that there exists $\chi:X\to \{-1, 0, 1\}$ that assigns $\pm 1$ to at least $|X|/10$ elements in $X$ such that $|\chi(S)|\leq b(|S|)$ for any $S\in \mathcal{C}_X$. By Lemma~\ref{lem:decomposition}, we know that $|\chi(A)| \leq 2\sum_s b(s)$ for any $A\in \mathcal{A}_X$. We shall estimate $\sum_s b(s)$ on five intervals using Lemma~\ref{lem:sum-b}:
	\begin{align}
	    \label{eqn:int-1}
	    \sum_{N_1 < s \leq N_d}b_1(s) &\leq 5c_1K_1^{\frac{1}{2}}N_1^{-\frac{1}{2}},\\
	    \label{eqn:int-2}
	    \sum_{1\leq s < (N_1\cdots N_d)^\frac{1}{d+1}\rho^\frac{\delta}{4^d(d+1)}}b_2(s) & \leq 5c_2K_2^\frac{1}{10d+10}(N_1\cdots N_d)^\frac{2}{5d+5}\rho^\frac{2\delta}{5\cdot 4^d(d+1)},\\
	    \label{eqn:int-3}
	    \sum_{N_1\rho^\frac{\delta}{4(d+1)^2}< s \leq N_1}b_2(s) &\leq 5c_2K_2^\frac{1}{d+1}N_1^{-\frac{1}{2}}\rho^{-\frac{\delta}{8(d+1)^2}},\\
	    \label{eqn:int-4}
	    \sum_{(N_1\cdots N_d)^\frac{1}{d+1}\rho^\frac{\delta}{4^d(d+1)}\leq s < K_3^{\frac{1}{d+1}}}b_3(s) & \leq 5c_2K_3^\frac{1}{2d+2},\\
	    \label{eqn:int-5}
	    \sum_{K_3^{\frac{1}{d+1}} \leq s \leq N_1\rho^\frac{\delta}{4(d+1)^2}}b_3(s) &\leq 5c_2K_3^\frac{1}{2d+2}.
	\end{align}
% 	\begin{equation}\label{eqn:int-1}
% 	    \sum_{N_1 < s \leq N_d}b_1(s) \leq 5c_1K_1^{\frac{1}{2}}N_1^{-\frac{1}{2}},
% 	\end{equation}
% 	\begin{equation}\label{eqn:int-2}
% 	    \sum_{1\leq s < (N_1\cdots N_d)^\frac{1}{d+1}\rho^\frac{\delta}{4^d(d+1)}}b_2(s) \leq 5c_2K_2^\frac{1}{10d+10}(N_1\cdots N_d)^\frac{2}{5d+5}\rho^\frac{2\delta}{5\cdot 4^d(d+1)},
% 	\end{equation}
% 	\begin{equation}\label{eqn:int-3}
% 	    \sum_{N_1\rho^\frac{\delta}{4(d+1)^2}< s \leq N_1}b_2(s) \leq 5c_2K_2^\frac{1}{d+1}N_1^{-\frac{1}{2}}\rho^{-\frac{\delta}{8(d+1)^2}},
% 	\end{equation}
% 	\begin{equation}\label{eqn:int-4}
% 	    \sum_{(N_1\cdots N_d)^\frac{1}{d+1}\rho^\frac{\delta}{4^d(d+1)}\leq s < K_3^{\frac{1}{d+1}}}b_3(s) \leq 5c_2K_3^\frac{1}{2d+2},
% 	\end{equation}
% 	\begin{equation}\label{eqn:int-5}
% 	    \sum_{K_3^{\frac{1}{d+1}} \leq s \leq N_1\rho^\frac{\delta}{4(d+1)^2}}b_3(s) \leq 5c_2K_3^\frac{1}{2d+2}.
% 	\end{equation}
	Note that $N_1 \geq (N_1\cdots N_d)^\frac{1}{d+1-\delta}$. As $X$ is nonempty, we have $\rho \geq \frac{1}{N_1\cdots N_d}$. For \eqref{eqn:int-1} we have
	\begin{equation}\label{eqn:int-1-sim}
	    5c_1K_1^{\frac{1}{2}}N_1^{-\frac{1}{2}} \leq 5^{\frac{d}{2}+2}c_1(N_1\cdots N_d)^{\frac{1}{2}\left(1-\frac{d}{d+1-\delta}\right)}\leq 5^{\frac{d}{2}+2}c_1(N_1\cdots N_d)^{\frac{1}{2d+2}}\rho^{\frac{d\delta}{2(d+1)(d+1-\delta)}}.
	\end{equation}
	Here in the first inequality we use $\sqrt{15\cdot 5^d}\leq 5^{\frac{d}{2}+1}$ and $N_1 \geq (N_1\cdots N_d)^\frac{1}{d+1-\delta}$, and in the second we use $\rho \geq \frac{1}{N_1\cdots N_d}$. Similarly we can estimate the right hand sides of \eqref{eqn:int-2}, \eqref{eqn:int-3}, \eqref{eqn:int-4}, \eqref{eqn:int-5} as follows:
	\begin{equation}\label{eqn:int-2-sim}
	    5c_2K_2^\frac{1}{10d+10} = 5c_2\cdot (5^d15)^\frac{1}{10d+10}(N_1\cdots N_d)^\frac{1}{2d+2}\rho^\frac{2\delta}{5\cdot 4^d(d+1)} \leq 10c_2(N_1\cdots N_d)^\frac{1}{2d+2}\rho^\frac{2\delta}{5\cdot 4^d(d+1)},
	\end{equation}
	\begin{equation}\label{eqn:int-3-sim}
	\begin{split}
	    5c_2K_2^\frac{1}{d+1}N_1^{-\frac{1}{2}}\rho^\frac{-\delta}{8(d+1)^2} &\leq 50c_2(N_1\cdots N_d)^{\frac{1}{2d+2}} (N_1\cdots N_d)^{\frac{1}{2d+2}- \frac{1}{2d+2-2\delta}}\rho^\frac{-\delta}{8(d+1)^2}\\
	    & \leq 
	    50c_2(N_1\cdots N_d)^{\frac{1}{2d+2}}\rho^{\frac{\delta}{2(d+1)(d+1-\delta)}-\frac{\delta}{8(d+1)^2}} \\
	    & \leq 50c_2(N_1\cdots N_d)^\frac{1}{2d+2}\rho^\frac{\delta}{4(d+1)^2},
	    \end{split}
	\end{equation}
	\begin{equation}\label{eqn:int-45-sim}
	    5c_2K_3^\frac{1}{2d+2}\leq 10c_2C^\frac{1}{2d+2}(N_1\cdots N_d)^\frac{1}{2d+2}\rho^\frac{\delta}{2\cdot 4^{d+1}(d+1)^3\cdot (d+2)!}.
	\end{equation}
	Now we define $c_d = \frac{1}{2\cdot 4^{d+1}(d+1)^3\cdot (d+2)!}$. The exponents of $\rho$ in \eqref{eqn:int-1-sim}, \eqref{eqn:int-2-sim}, \eqref{eqn:int-3-sim} and \eqref{eqn:int-45-sim} are at least $c_d\delta$. Putting \eqref{eqn:int-1-sim} into \eqref{eqn:int-1}, \eqref{eqn:int-2-sim} into \eqref{eqn:int-2}, \eqref{eqn:int-3-sim} into \eqref{eqn:int-3}, and \eqref{eqn:int-45-sim} into \eqref{eqn:int-4} and \eqref{eqn:int-5} and summing them together, we derive that
	\[\sum_{s}b(s) \leq \left(5^{\frac{d}{2}+2}c_1 + 10c_2 + 50c_2 + 10c_2C^\frac{1}{2d+2} + 10c_2C^\frac{1}{2d+2}\right)\cdot (N_1\cdots N_d)^\frac{1}{2d+2}\rho^{c_d\delta}.\]
	Hence if we set $C_d = 2\left(5^{\frac{d}{2}+2}c_1 + 60c_2+20c_2C^\frac{1}{2d+2}\right)$, then for any $X$ there exists $\chi:X\to \{-1, 0, 1\}$ that assigns $\pm 1$ to at least $|X|/10$ elements in $X$ such that 
	\[\max_{A_0\in \mathcal{A}_\vN}|\chi(A_0\cap X)| = \max_{A\in \mathcal{A}_X}|\chi(A)|\leq C_d(N_1\cdots N_d)^\frac{1}{2d+2}\left(\frac{|X|}{N_1\cdots N_d}\right)^{c_d\delta},\]
	as desired.
	\end{proof}
	\begin{remark}
	The argument gives $C_d = 2^{O(d^2)}$ and $c_d = 2^{-O(d\log d)}$.
	\end{remark}
	\begin{corollary}\label{cor: almost cube partial color}
	Let $d$ be a positive integer. There exist constants $C_d$ and $c_d$ such that the following holds. Let $N_1, N_2, \dots, N_d$ be positive integers satisfying $N_1\cdots N_d \leq (\min_{1\leq i\leq d} N_i)^{d+1-\delta}$ for some $\delta\in (0, 1]$. For any nonempty $X\subseteq [N_1]\times \cdots \times [N_d]$, there exists a coloring $\chi: X\to \{-1, 1\}$ with
	\[\max_{A_0\in \mathcal{A}_\vN}|\chi(A_0\cap X)| \leq C_d\frac{1}{\delta}(N_1\cdots N_d)^{\frac{1}{2d+2}}\cdot \left(\frac{|X|}{N_1\cdots N_d}\right)^{c_d\delta}.\]
	\end{corollary}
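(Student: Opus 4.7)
The plan is to obtain a full coloring of $X$ by iteratively applying Proposition~\ref{prop:partial-coloring-almost-cube}, starting with $X_0 := X$. At step $i$, let $X_i$ denote the uncolored elements remaining in $X$, and apply Proposition~\ref{prop:partial-coloring-almost-cube} to $X_i$ (provided $X_i \neq \emptyset$) to obtain a partial coloring $\chi_{i+1}$ which colors at least $|X_i|/10$ elements of $X_i$, with
\[
\max_{A_0\in \mathcal{A}_\vN}|\chi_{i+1}(A_0\cap X_i)| \leq C_d (N_1\cdots N_d)^{\frac{1}{2d+2}}\rho_i^{c_d\delta},
\]
where $\rho_i := |X_i|/(N_1\cdots N_d)$. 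The final coloring $\chi$ of $X$ is the sum $\chi = \sum_{i\geq 1}\chi_i$, and for any arithmetic progression $A_0 \in \mathcal{A}_\vN$, $\chi(A_0\cap X) = \sum_i \chi_i(A_0\cap X_{i-1})$ by disjointness of supports.

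The key quantitative step is that $|X_{i+1}| \leq (9/10)|X_i|$, so $\rho_i \leq (9/10)^i \rho_0$ for as long as the iteration proceeds. Summing the per-step bounds yields
\[
\max_{A_0\in \mathcal{A}_\vN}|\chi(A_0\cap X)| \leq C_d (N_1\cdots N_d)^{\frac{1}{2d+2}}\rho_0^{c_d\delta}\sum_{i=0}^{\infty}(9/10)^{ic_d\delta}.
\]
The geometric sum equals $(1-(9/10)^{c_d\delta})^{-1}$, and since $c_d\delta \in (0, 1]$ and $\ln(10/9) < 1$, the elementary inequality $1-e^{-y}\geq y/2$ for $y\in[0,1]$ applied with $y = c_d\delta\ln(10/9)$ gives $(1-(9/10)^{c_d\delta})^{-1} \leq 2/(c_d\delta\ln(10/9)) = O_d(1/\delta)$. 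Absorbing this factor and the constants $c_d, C_d$ into a new constant depending only on $d$ gives the desired $C_d/\delta$ prefactor.

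The one technical point is termination: the partial coloring lemma only guarantees coloring a $1/10$ fraction, so after finitely many steps $|X_i|$ drops to some absolute constant (depending on $d$) where Proposition~\ref{prop:partial-coloring-almost-cube} might cease to yield any new colored element. I would handle this by stopping the iteration once $|X_i| \leq M_d$ for a suitable constant $M_d$ and coloring the remaining elements arbitrarily by $+1$; this adds at most $M_d = O_d(1)$ to the discrepancy of any arithmetic progression, which is dominated by the main term since $(N_1\cdots N_d)^{1/(2d+2)}\geq 1$. The only real obstacle is the bookkeeping to check that this additive $O_d(1)$ term can be absorbed into the constant $C_d$, and that the exponent $c_d\delta$ in $\rho_i^{c_d\delta}$ from the per-step Proposition truly decays geometrically (which it does, since $\rho_i \leq (9/10)^i\rho_0$ regardless of $\delta$); no new geometric or number-theoretic input is needed beyond Proposition~\ref{prop:partial-coloring-almost-cube}.
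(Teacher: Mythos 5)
Your proof is correct and takes essentially the same approach as the paper: iterate Proposition~\ref{prop:partial-coloring-almost-cube}, sum the geometric series in $\rho_i$, and bound $(1-(0.9)^{c_d\delta})^{-1}=O_d(1/\delta)$ using $c_d\delta\in(0,1]$. Your worry about termination is unnecessary---whenever $X_i\neq\emptyset$, Proposition~\ref{prop:partial-coloring-almost-cube} colors at least $|X_i|/10>0$, hence at least one, elements, so the iteration already terminates after finitely many steps with no need for a separate stopping rule or additive $O_d(1)$ correction.
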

	\begin{proof}
	    Let $C_d', c_d'$ be the constants in Proposition~\ref{prop:partial-coloring-almost-cube}. Then we set $c_d = \min(c_d', 1)$. Starting from $X_1 = X$, for each $i\geq 0$, we partially color $X_i$ by using Proposition~\ref{prop:partial-coloring-almost-cube} and let $X_{i+1}$ be the set of uncolored points. Suppose that we do this for $K$ iterations such that $X_{K+1} = \emptyset$. Since $|X_i| \leq (0.9)^{i-1}|X|$ it follows that the total discrepancy of this coloring is at most
	    \[\sum_{i=1}^K C_d'(N_1\cdots N_d)^\frac{1}{2d+2}\left(\frac{|X_i|}{N_1\cdots N_d}\right)^{c_d\delta} \leq (N_1\cdots N_d)^\frac{1}{2d+2}\left(\frac{|X|}{N_1\cdots N_d}\right)^{c_d\delta}\cdot C_d'\sum_{i=1}^K \left((0.9)^{c_d\delta}\right)^{i-1}.\]
	    Finally noting that $\delta \leq 1$, we have $1-(0.9)^{c_d\delta} \geq \frac{c_d\delta}{10}$ as $c_d\delta \in (0, 1]$. Therefore we have
	    \[\sum_{i=1}^K \left((0.9)^{c_d\delta}\right)^{i-1} \leq \frac{1}{1-(0.9)^{c_d\delta}} \leq \frac{10}{c_d\delta}.\]
	    We may set $C_d = \frac{10C_d'}{c_d}$ to get the desired inequality.
	\end{proof}
\begin{proof}[Proof of the upper bound of Theorem~\ref{thm:almost-cubes}]
 	By using Corollary~\ref{cor: almost cube partial color}
 	and taking $X = [N_1]\times \cdots \times [N_d]$, we have $\disc(\mathcal{A}_\vN) \leq C_d\frac{(N_1\cdots N_d)^{\frac{1}{2d+2}}}{\delta}$ for some $C_d = 2^{O(d^2)}$ which proves Theorem~\ref{thm:almost-cubes}.
\end{proof}

\section{A better estimate on the number of sets in the decomposition}\label{sec:geometry}
% \todo{compare with section 2 title?}
    In this section, we prove Lemma~\ref{lem:counting-subsets-3}, which improves upon Lemma~\ref{lem:counting-U-1}. 
    % In the following argument, we focus on tuples $\vN = (N_1,\dots, N_d)$ as defined in \eqref{eqn:almost-cubes}\todo{Looks like we don't really need this condition in most of the statements -Yunkun}. %\todo{Fix/delete almost-equal and almost-cube and make changes accordingly.}
    Recall that $U^d(X, \vb, s)$ is the set of elements in $X$ for which there are at least $s$ elements of $X$ in the same residue class mod $\vb$. 

To prove Lemma~\ref{lem:counting-subsets-3}, we induct on $d$.
	We need the following two results regarding lattice points.
	\begin{lemma}[Minkowski's theorem, see e.g.~Section III.2.2 in \cite{Cassels}]\label{thm:Minkowski}
	    Let $X\subseteq \R^d$ be a point set of volume $V(X)$ which is symmetric about the origin and convex. Let $\Gamma\subseteq \R^d$ be a $d$-dimensional lattice of determinant $\det(\Gamma)$. If $V(X) > 2^d\det(\Gamma)$, then $X\cap \Gamma$ contains a pair of distinct points $\pm \vx$.
	\end{lemma}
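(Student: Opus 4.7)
The plan is to prove this via the classical two-step argument: first a Blichfeldt-style pigeonhole on volumes modulo the lattice, and then a symmetry/convexity step to extract an honest lattice point from $X$. The hypothesis $V(X) > 2^d \det(\Gamma)$ naturally suggests rescaling by $1/2$, since then $V(\tfrac{1}{2}X) > \det(\Gamma)$, which is exactly the threshold at which a set cannot inject into the quotient $\R^d/\Gamma$.

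First I would prove the auxiliary claim (Blichfeldt's lemma): if $S \subseteq \R^d$ is measurable with $V(S) > \det(\Gamma)$, then there exist distinct points $s_1, s_2 \in S$ with $s_1 - s_2 \in \Gamma$. The argument is to fix a fundamental domain $F$ for $\Gamma$, decompose $S = \bigsqcup_{\gamma \in \Gamma} (S \cap (F+\gamma))$, and translate each piece back to $F$ by subtracting $\gamma$. The total volume of the translated pieces is $V(S) > V(F) = \det(\Gamma)$, so by a measure-theoretic pigeonhole two pieces must overlap at some point of $F$, yielding $s_1 - \gamma_1 = s_2 - \gamma_2$ for distinct $\gamma_1, \gamma_2 \in \Gamma$ and hence $s_1 - s_2 \in \Gamma \setminus \{\vzero\}$.

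Now I would apply the auxiliary claim with $S = \tfrac{1}{2}X$, which has volume $2^{-d} V(X) > \det(\Gamma)$ by hypothesis. This yields distinct points $\tfrac{1}{2}x_1, \tfrac{1}{2}x_2 \in \tfrac{1}{2}X$ (so $x_1, x_2 \in X$ are distinct) with $\tfrac{1}{2}(x_1 - x_2) \in \Gamma$. To finish, I would use that $X$ is symmetric and convex: $-x_2 \in X$, and the midpoint $\tfrac{1}{2}(x_1 + (-x_2)) = \tfrac{1}{2}(x_1 - x_2)$ lies in $X$ by convexity. Setting $\vx = \tfrac{1}{2}(x_1 - x_2)$ gives a nonzero element of $X \cap \Gamma$, and by symmetry of $X$ the pair $\pm \vx$ both lie in $X \cap \Gamma$.

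The only real subtlety is the pigeonhole step in Blichfeldt's lemma: one must handle the possibility that the overlap occurs on a measure-zero set. This is resolved either by a standard approximation argument (shrinking $S$ slightly while keeping $V(S) > \det(\Gamma)$) or by invoking countable additivity of Lebesgue measure directly; everything else is a routine application of symmetry and convexity. Since this is Minkowski's classical theorem and the paper cites \cite{Cassels}, I would expect the authors to quote it without proof, which is consistent with its statement as a named lemma rather than a new result.
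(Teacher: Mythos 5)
Your proof is correct and is the standard textbook argument (Blichfeldt's pigeonhole on $\tfrac{1}{2}X$ modulo $\Gamma$, then symmetry and convexity to land a nonzero lattice point in $X$). As you anticipated, the paper does not prove this lemma at all---it simply cites Cassels---so there is nothing to compare against; your argument is a correct standalone proof of the cited result.
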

	
	\begin{lemma}[Lenstra-Lenstra-Lov\'asz Basis Reduction \cite{LLL}]\label{thm:LLL}
	    Let $\Gamma\subseteq \R^d$ be a $d$-dimensional lattice of determinant $\det(\Gamma)$. There exists a basis $\vx_1, \dots, \vx_d$ of $\Gamma$ such that
	    \[\det(\Gamma) \leq \prod_{i=1}^n \|\vx_i\|_2 \leq 2^{\frac{d(d-1)}{4}}\det(\Gamma).\]
	\end{lemma}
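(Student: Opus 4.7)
The plan is to follow the classical construction of an LLL-reduced basis. The lower bound $\det(\Gamma)\le \prod_i\|\vx_i\|_2$ is immediate from Hadamard's inequality applied to the matrix whose columns are the $\vx_i$, so all real work is in constructing a basis that achieves the upper bound.

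For the upper bound, I would start from any basis $b_1,\ldots,b_d$ of $\Gamma$ and form its Gram--Schmidt orthogonalization $b_1^*,\ldots,b_d^*$ with coefficients $\mu_{ij}=\langle b_i,b_j^*\rangle/\|b_j^*\|_2^2$; recall $\det(\Gamma)=\prod_i\|b_i^*\|_2$. Call the basis LLL-reduced when (i) $|\mu_{ij}|\le 1/2$ for all $j<i$, and (ii) $\|b_{i+1}^*\|_2^2 \ge (3/4-\mu_{i+1,i}^2)\|b_i^*\|_2^2$ for each $i$. To produce such a basis, I would alternate two elementary moves: a \emph{size reduction} that replaces $b_i$ by $b_i-\lfloor\mu_{ij}\rceil b_j$ to force $|\mu_{ij}|\le 1/2$ without affecting any $b_k^*$; and a \emph{swap} of $b_i$ and $b_{i+1}$ whenever condition (ii) fails. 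A direct update of the Gram--Schmidt formulas shows that because $\mathrm{span}(b_1,\ldots,b_{i+1})$ is preserved under a swap, the product $\|b_i^*\|_2^2\|b_{i+1}^*\|_2^2$ is unchanged, while the failure of (ii) forces the new $\|b_i^*\|_2^2$ to shrink by a factor at most $3/4$.

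Termination follows from the potential $\Phi=\prod_{i=1}^d\|b_i^*\|_2^{2(d-i+1)}$, which is invariant under size reductions and contracts by a factor $\le 3/4$ with each swap. For the integer lattices that arise throughout the paper, $\Phi$ takes values in a discrete subset of $(0,\infty)$, so only finitely many swaps occur; for a general $\Gamma\subseteq \R^d$ one can alternatively simply pick a basis minimizing $\Phi$, which exists because the set of bases of $\Gamma$ with bounded $\Phi$ is discrete. Once an LLL-reduced basis is in hand, condition (ii) together with $\mu_{i+1,i}^2\le 1/4$ yields $\|b_{i+1}^*\|_2^2\ge \tfrac12\|b_i^*\|_2^2$, so $\|b_j^*\|_2^2\le 2^{i-j}\|b_i^*\|_2^2$ for $j<i$. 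Substituting into $\|b_i\|_2^2=\|b_i^*\|_2^2+\sum_{j<i}\mu_{ij}^2\|b_j^*\|_2^2$ and summing the resulting geometric series gives $\|b_i\|_2^2\le 2^{i-1}\|b_i^*\|_2^2$, so multiplying over $i$ produces $\prod_i\|b_i\|_2\le 2^{d(d-1)/4}\prod_i\|b_i^*\|_2 = 2^{d(d-1)/4}\det(\Gamma)$.

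The main obstacle is the swap step: one must verify, via the Gram--Schmidt update formulas, that the failure of condition (ii) translates precisely into a contraction of the $i$-th factor of $\Phi$ by $3/4$. Establishing termination in the general real case is the other delicate point, but for the integer lattices that the present paper uses it is essentially immediate, and the rest of the argument reduces to straightforward linear algebra together with the geometric-series bound above.
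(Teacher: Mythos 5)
Your proof is correct, but it is worth being clear that you have done substantially more work than the paper does: the authors do not prove Lemma~\ref{thm:LLL} at all. They cite it directly from \cite{LLL} (Propositions~1.6 and~1.26 there), and the remark following the lemma explicitly disclaims any interest in what a reduced basis even is. So where the paper uses a black box, you have reconstructed the full classical LLL argument, and the arithmetic lands correctly: the Lov\'asz condition with $\mu_{i+1,i}^2\le 1/4$ gives $\|b_{i+1}^*\|_2^2\ge \tfrac12\|b_i^*\|_2^2$, hence $\|b_i\|_2^2\le \bigl(1+\tfrac14\sum_{j<i}2^{i-j}\bigr)\|b_i^*\|_2^2\le 2^{i-1}\|b_i^*\|_2^2$, and multiplying over $i$ yields the exponent $\tfrac12\sum_{i=1}^d(i-1)=\tfrac{d(d-1)}{4}$ as required; the lower bound via Hadamard is standard. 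The one place you are a little loose is termination for a general real lattice: discreteness of the set of bases is not by itself enough to extract a $\Phi$-minimizer, since without first size-reducing, the coefficients $\mu_{ij}$ (and hence $\|b_i\|_2$) are unbounded even when all $\|b_i^*\|_2$ are controlled. You should either restrict attention to size-reduced bases with $\Phi\le C$ (which form a finite set) before minimizing, or use the more standard route of bounding $\Phi$ away from zero via $\det(\Gamma_i)\ge c_i\,\lambda_1(\Gamma)^i$ (Minkowski's first theorem applied to each $\Gamma_i$), so that only finitely many factor-$\tfrac34$ swaps can occur. For the integer lattices that actually appear in this paper, your observation that $\Phi$ is a positive integer suffices.
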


	\begin{remark}
	The inequality above is true for any reduced basis (see \cite[Proposition 1.6]{LLL}); the existence of which is guaranteed by an algorithm which transforms any given basis to a reduced one (see  \cite[Proposition 1.26]{LLL}). It will not be important for us what the definition of a reduced basis is. 
	\end{remark}
	
	\begin{corollary}\label{cor:LLL}
	Let $\Gamma\subseteq \R^d$ be a $d$-dimensional lattice. Suppose that $V_0 > 0$ is a real number such that the following holds: for every set $P\subseteq \R^d$ of volume $V_d(P) > V_0$ that is symmetric about the origin and convex, $P\cap \Gamma$ contains a nonzero point. Then there exists a basis $\vx_1, \dots, \vx_d$ of $\Gamma$ such that
	\[\prod_{i=1}^n \|\vx_i\|_2 \leq 2^{\frac{d(d-1)}{4}-d}V_0.\]
	\end{corollary}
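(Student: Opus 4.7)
The plan is to combine the hypothesis, which is essentially a ``reverse Minkowski'' bound on $\det(\Gamma)$, with Lemma~\ref{thm:LLL}, which converts a determinant bound into a bound on the product of basis norms. The hypothesis alone does not tell us which basis to take, so LLL is what lets us extract an honest basis from the determinant information.

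First I would show that the hypothesis implies $\det(\Gamma) \le 2^{-d} V_0$. To do this, fix any basis $\vy_1, \dots, \vy_d$ of $\Gamma$ and consider the open parallelepiped
\[
P = \Bigl\{\sum_{i=1}^d t_i \vy_i : t_i \in \R,\ |t_i| < 1\Bigr\}.
\]
Then $P$ is symmetric about the origin, convex, and has volume $V_d(P) = 2^d \det(\Gamma)$. Any element of $\Gamma$ has the form $\sum_{i=1}^d n_i \vy_i$ with $n_i \in \Z$, and this lies in $P$ only when every $n_i = 0$, so $P \cap \Gamma = \{\vzero\}$. If we had $\det(\Gamma) > 2^{-d} V_0$, then $V_d(P) > V_0$, and the hypothesis would force $P$ to contain a nonzero lattice point, contradicting the previous sentence. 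Hence $\det(\Gamma) \le 2^{-d} V_0$.

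Second, I would invoke Lemma~\ref{thm:LLL} to obtain a basis $\vx_1, \dots, \vx_d$ of $\Gamma$ with $\prod_{i=1}^d \|\vx_i\|_2 \le 2^{d(d-1)/4}\det(\Gamma)$. Combining this with the determinant bound from the previous paragraph yields
\[
\prod_{i=1}^d \|\vx_i\|_2 \;\le\; 2^{d(d-1)/4} \det(\Gamma) \;\le\; 2^{\frac{d(d-1)}{4} - d} V_0,
\]
which is exactly the desired inequality.

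I do not anticipate a real obstacle: the argument is a two-line composition of the contrapositive of the hypothesis with the upper bound of LLL. The only small subtlety is that one must use the \emph{open} parallelepiped $P$ so that the strict volume inequality $V_d(P) > V_0$ can be triggered when $\det(\Gamma) > 2^{-d} V_0$; the closed parallelepiped contains the $2^d$ vertices $\pm \vy_1 \pm \cdots \pm \vy_d$ as nonzero lattice points and so would not give a contradiction in the right direction.
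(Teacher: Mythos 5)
Your proposal is correct and follows essentially the same route as the paper: build the symmetric open parallelepiped from a basis, observe it contains no nonzero lattice point, deduce $\det(\Gamma)\le 2^{-d}V_0$ from the contrapositive of the hypothesis, and then apply Lemma~\ref{thm:LLL}. The only cosmetic difference is that you argue $P\cap\Gamma=\{\vzero\}$ directly via integer coefficients with $|t_i|<1$, whereas the paper phrases the same fact in terms of translates of the fundamental parallelepiped; your remark on why the open (rather than closed) parallelepiped is needed is a nice clarification but not a divergence in method.
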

	\begin{proof}
	Let $\vy_1, \dots, \vy_d$ be a basis of $\Gamma$. Note that the fundamental parallelepiped of $\Gamma$ defined as $\{\mathbf{a}\cdot \vy: \mathbf{a}\in (0, 1)^{d-1}\}$ where $\vy := (\vy_1, \dots, \vy_d)$ contains no nonzero vector of $\Gamma$. By translation, there is no nonzero vector of $\Gamma$ in $P:= \{\mathbf{a}\cdot \vy : \mathbf{a}\in (-1, 1)^{d-1}\}$, and from the condition we know that $V_d(P) \leq V_0$. Note that $V_d(P) = 2^d\det(\Gamma)$, so $\det(\Gamma) \leq 2^{-d}V_0$. By Lemma~\ref{thm:LLL}, there exists a basis $\vx_1, \dots, \vx_d$ such that
	\[\prod_{i=1}^n\|\vx_i\|_2 \leq 2^{\frac{d(d-1)}{4}}\det(\Gamma) \leq 2^{\frac{d(d-1)}{4}-d}V_0,\]
	which completes the proof.
	\end{proof}
	
	Before we start to prove Lemma~\ref{lem:counting-subsets-3}, we introduce some standard notation. For a map $\phi: X\to Y$ and subsets $A\subseteq X$ and $B\subseteq Y$, let $\phi(A)$ be the set of images $\{\phi(a): a\in A\}\subseteq Y$, and let $\phi^{-1}(B)$ be the set of preimages $\{x\in X: \phi(x)\in B\}\subseteq X$.

	We next prove a geometric lemma which shows that, given a vector $\vb \in \mathbb{Z}^d$ whose coordinates have greatest common divisor $1$, there is a linear map from $\Z^d$ to $\Z^{d-1}$ which has full rank with null space generated by $\vb$, and maps a grid into another grid with similar size. 
	
	\begin{lemma}\label{lem:projection-coprime}
	Let $d \geq 2$ be a positive integer, and $N_1, N_2, \dots, N_d$ be positive integers. Suppose that $\vb = (b_1, \dots, b_d)\in \Z^d$ is a nonzero vector satisfying that $\gcd(b_1, \dots, b_d) = 1$ and $|b_i| \leq N_i$ for all $1\leq i\leq d$. Then there exists a linear map $f_\vb: \Z^{d} \to \Z^{d-1}$ so that the following two conditions holds.
	\begin{enumerate}
		\item \label{item:nullspace-projection-coprime}For any $\vx_1, \vx_2 \in \Z^d$, $f_\vb(\vx_1) = f_\vb(\vx_2)$ if and only if $\vx_1 - \vx_2 = k\vb$ for some $k\in \Z$.
		\item \label{item:range-projection-coprime}There exist positive integers $N_1^*, N_2^*, \dots, N_{d-1}^* \geq \min_{1\leq i\leq d} N_i$ so that
		\begin{equation}\label{eqn:range-projection-coprime} \frac{1}{2} N_1\cdots N_d \cdot \left(\max_{1\leq i\leq d}\frac{|b_i|}{N_i}\right)\leq N_1^*\cdots N_{d-1}^* \leq 2^{d^2} N_1\cdots N_d \cdot \left(\max_{1\leq i\leq d}\frac{|b_i|}{N_i}\right)\end{equation}
		and $f_\vb([N_1]\times \cdots \times [N_d])\subseteq [N_1^*] \times \cdots \times [N_{d-1}^*]$.
		\end{enumerate}
	\end{lemma}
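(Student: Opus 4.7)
The plan is to reduce the construction of $f_\vb$ to finding a short basis of the sublattice $\Lambda := \vb^\perp \cap \Z^d$ in a weighted norm, and then to produce such a basis via lattice reduction (Lemma~\ref{thm:LLL}).

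First I would set up the correspondence. Any basis $\mathbf{w}_1, \dots, \mathbf{w}_{d-1}$ of $\Lambda$ determines a linear map $f_\vb(\vx) := (\mathbf{w}_1 \cdot \vx, \dots, \mathbf{w}_{d-1} \cdot \vx)$. Since the $\mathbf{w}_i$ span $\vb^\perp$ over $\R$, the real kernel of $f_\vb$ is $\R\vb$; combined with $\Z^d \cap \R\vb = \Z\vb$ (from $\gcd(b_1,\dots,b_d)=1$), this gives property~\ref{item:nullspace-projection-coprime}. For $\vx \in [N_1]\times\cdots\times[N_d]$ we have $|\mathbf{w}_i\cdot \vx|\le \|\mathbf{w}_i\|_\vN := \sum_j N_j |w_{ij}|$, so after an integer translation the image of the box is contained in $[N_1^*]\times\cdots\times[N_{d-1}^*]$ with $N_i^* := \lceil \|\mathbf{w}_i\|_\vN\rceil + 1$; each $N_i^*\ge \min_j N_j$ automatically since $\mathbf{w}_i$ is a nonzero integer vector. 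The lower bound in \eqref{eqn:range-projection-coprime} follows from a fiber-count: every fiber of $f_\vb$ meets $[N_1]\times\cdots\times[N_d]$ in at most $\min_{i:b_i\ne 0} N_i/|b_i|+1 \le 2/\max_i(|b_i|/N_i)$ integer points (using $|b_i|\le N_i$), so
\[
\prod_i N_i^* \;\ge\; |f_\vb([N_1]\times\cdots\times[N_d])| \;\ge\; \tfrac{1}{2}\,N_1\cdots N_d\cdot \max_i(|b_i|/N_i).
\]

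The bulk of the proof is the upper bound in \eqref{eqn:range-projection-coprime}, which becomes the task of choosing the basis so that $\prod_i \|\mathbf{w}_i\|_\vN$ is small. I would rescale via $\Phi(\mathbf{w}):=(N_1w_1,\dots,N_dw_d)$ so that $\|\mathbf{w}\|_\vN=\|\Phi(\mathbf{w})\|_1\le\sqrt{d}\,\|\Phi(\mathbf{w})\|_2$; the rescaled lattice $\Phi(\Lambda)$ lies in the hyperplane $(D^{-1}\vb)^\perp\subset\R^d$ with $D=\operatorname{diag}(N_1,\dots,N_d)$. Applying Lemma~\ref{thm:LLL} to $\Phi(\Lambda)$ as a lattice of rank $d-1$ in its own span yields a basis with $\prod_i\|\Phi(\mathbf{w}_i)\|_2\le 2^{(d-1)(d-2)/4}\det(\Phi(\Lambda))$. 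Converting back via $\|\mathbf{w}_i\|_\vN\le\sqrt{d}\,\|\Phi(\mathbf{w}_i)\|_2$ and using the bound $\det(\Phi(\Lambda))\le\sqrt{d}\,\prod_i N_i\cdot \max_i(|b_i|/N_i)$ derived below, one obtains
\[
\prod_i\|\mathbf{w}_i\|_\vN \;\le\; d^{d/2}\cdot 2^{(d-1)(d-2)/4}\cdot \prod_i N_i\cdot \max_i(|b_i|/N_i),
\]
which (after absorbing the $\lceil\cdot\rceil+1$ rounding) is comfortably below $2^{d^2}\prod_iN_i\cdot \max_i(|b_i|/N_i)$ for all $d\ge 2$.

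The main obstacle is computing $\det(\Phi(\Lambda))$ in the tilted hyperplane $(D^{-1}\vb)^\perp$: since $\Phi$ is not an isometry there, one cannot simply multiply $\det(\Lambda)$ by a Jacobian. My approach is a base-times-height computation. The parallelepiped spanned by $\mathbf{w}_1,\dots,\mathbf{w}_{d-1},\vb$ has $d$-volume $\det(\Lambda)\cdot \|\vb\|_2=\|\vb\|_2^2$ (using $\det(\Lambda)=\|\vb\|_2$, which holds because $\gcd(\vb)=1$), and applying $\Phi=D$ multiplies this $d$-volume by $|\det D|=\prod_iN_i$. On the other hand, the same $d$-volume equals $\det(\Phi(\Lambda))$ times the distance from $D\vb$ to $(D^{-1}\vb)^\perp$, which is $(D\vb)\cdot(D^{-1}\vb)/\|D^{-1}\vb\|_2=\|\vb\|_2^2/\|D^{-1}\vb\|_2$. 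Solving gives $\det(\Phi(\Lambda))=\prod_iN_i\cdot \|D^{-1}\vb\|_2\le\sqrt{d}\,\prod_iN_i\cdot \max_i(|b_i|/N_i)$, closing the argument.
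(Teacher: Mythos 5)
Your proof is correct and, at its core, follows the same strategy as the paper: build $f_\vb$ from an LLL-reduced basis of the lattice $(N_1\Z\times\cdots\times N_d\Z)\cap(\vb^*)^\perp$ (your $\Phi(\vb^\perp\cap\Z^d)$, with $\vb^*=D^{-1}\vb$), control $\prod_i\|\mathbf{w}_i\|_\vN=\prod_i\|\Phi(\mathbf{w}_i)\|_1$ via Lemma~\ref{thm:LLL} and the $\ell_1$--$\ell_2$ comparison, and handle the lower bound by fiber-counting. The one place you genuinely diverge is the covolume estimate, which is the heart of Claim~\ref{claim:find-short-basis}: the paper never writes down $\det$ of the sub-lattice explicitly; instead it applies Minkowski's theorem (Lemma~\ref{thm:Minkowski}) to a cylinder in $\R^d$ to verify the critical-volume hypothesis of Corollary~\ref{cor:LLL}, which then internally recovers the determinant bound from the fundamental-domain argument. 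You instead compute $\det\bigl(\Phi(\vb^\perp\cap\Z^d)\bigr)=\|\vb^*\|_2\,N_1\cdots N_d$ exactly by a base-times-height identity, which rests on the standard fact $\det(\vb^\perp\cap\Z^d)=\|\vb\|_2$ for primitive $\vb$. Both routes arrive at the same bound; your version is slightly more direct and makes the role of primitivity transparent (the paper instead uses $\gcd(\vb)=1$ only for part~\eqref{item:nullspace-projection-coprime} and the fiber bound, getting the determinant inequality via Minkowski without it). The remaining bookkeeping in your write-up (the $N_i^*\ge\min_j N_j$ lower bound, the $\lceil\cdot\rceil+1$ rounding, the $\le 2^{d^2}$ constant check) is all sound.
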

	\begin{proof}
	We may write $f_\vb(\vx) = M\vx + \mathbf{v}$ for some $M\in \Z^{(d-1)\times d}$ and $\mathbf{v}\in \Z^{d-1}$ to be chosen later. Condition \eqref{item:nullspace-projection-coprime} says that $M$ is full rank, with null space generated by $\vb$.
	
	We regard the rows of $M$ as vectors $\vr_1, \dots, \vr_{d-1}\in \Z^d$. For each $1\leq j\leq d-1$, if we write $\vr_j = (r_{j1}, \dots, r_{jd})$, then we define $\vr^*_j := (r_{j1}N_1, \dots, r_{jd}N_d) \in \Lambda$ where $\Lambda := N_1\Z\times \cdots \times N_d\Z$. Condition \eqref{item:nullspace-projection-coprime} is equivalent to saying that the vectors $\vr_j$ in $\Z^d$ for $1\leq j\leq d-1$ are linearly independent and $\vr_j \cdot \vb = 0$ for all $1\leq j\leq d-1$. In terms of $\vr^*_j$, this is equivalent to $\vr^*_j\cdot \vb^* = 0$ for $\vb^* := (\frac{b_1}{N_1}, \dots, \frac{b_d}{N_d})$, and $\vr^*_1, \dots, \vr^*_{d-1}$ are linearly independent vectors. The following claim allows us to find these vectors whose product of $\ell_2$-norms is small.
	\begin{claim}\label{claim:find-short-basis}
	There exists linearly independent vectors $\vr^*_1, \dots, \vr^*_{d-1}\in \Lambda$ that satisfy $\vr^*_j\cdot \vb^* = 0$ for each $1\leq j\leq d-1$, and 
	\begin{equation}\label{eqn:LLL}\prod_{j=1}^{d-1}\|\vr^*_j\|_2 \leq 2^{\frac{(d-1)(d-2)}{4}} \cdot  \|\vb^*\|_2N_1N_2\cdots N_d,\end{equation}
	\end{claim}
	\begin{proof}[Proof of Claim~\ref{claim:find-short-basis}]
	Consider the subspace of $\R^d$ defined by $\langle \vb^*\rangle^\perp := \{\vx\in \R^d: \vx \cdot \vb^* = 0\}$ which has dimension $d-1$. The intersection $\Lambda^* := \Lambda \cap \langle \vb^*\rangle^\perp$ is a lattice in $\langle \vb^*\rangle^\perp$. As $\vb\in \Z^d$ is a nonzero vector with integer entries, there exist linearly independent vectors $\vr_1, \dots, \vr_{d-1}\in \Z^d$ that satisfy $\vr_j\cdot \vb = 0$ for each $1\leq j\leq d-1$. Hence we can find $d-1$ linearly independent vectors $\vr_j^*\in \Lambda\cap \langle \vb^*\rangle^\perp$ defined by $\vr_j^* = (r_{j1}N_1,\dots, r_{jd}N_d)$ where $\vr_j = (r_{j1}, \dots, r_{jd})$ for $1\leq j\leq d-1$. The linear independence of $\{\vr_j^*\}_{j=1}^{d-1}$ follows from the linear independence of $\{\vr_j\}_{j=1}^{d-1}$, while $\vr_j^*\cdot \vb^* = \vr_j \cdot \vb = 0$ for each $1\leq j\leq d-1$. Thus we conclude that $\Lambda^*$ is a $(d-1)$-dimensional lattice.
	
	We next consider some geometric properties of $\Lambda^*$ as a subset of the $(d-1)$-dimensional Euclidean space $\langle \vb^*\rangle^\perp$. Let $P$ be a subset of $\langle \vb^*\rangle^\perp$ that is symmetric about the origin and convex. Now we consider the set $X := \{\mathbf{p} + a\vb^*: \mathbf{p}\in P, a\in (-1/\|\vb\|_2^2, 1/\|\vb\|_2^2)\}$. We could equivalently phrase this as: $X$ contains all the point $\vx$ that satisfies $\vx\cdot \vb^* \in (-1, 1)$, and the projection of $\vx$ onto the hyperspace $\langle\vb^*\rangle$ is in $P$. Therefore from this geometric interpretation we know that $V_d(X) = \frac{2}{\|\vb\|_2}V_{d-1}(P)$ where $V_d$ denotes the $d$-dimensional volume. Meanwhile, we see that as $P$ and $\{a\vb^*: a\in (-1/\|\vb\|_2^2, 1/\|\vb\|_2^2)\}$ are both convex and symmetric about the origin, their Minkowski sum $X$ is also convex and symmetric about the origin. By Minkowski's theorem, Lemma \ref{thm:Minkowski}, if $V_d(X) > 2^d\det(\Lambda)$, then there is a nonzero point $\vx\in \Lambda\cap X$. Note that any point $\vx\in \Lambda$ satisfies that $\vx\cdot \vb^*$ is an integer, while any point $\vb\in X$ satisfies that $\vb\cdot \vb^* \in (-1, 1)$, we know that if $\vx \in \Lambda\cap X$, then $\vx\in \langle \vb^*\rangle^\perp$. Note that $\Lambda^* = \Lambda\cap \langle \vb^*\rangle^\perp$ and $P = X\cap \langle \vb^*\rangle^\perp$, this means $\vx \in \Lambda^*\cap P$. In summary, if $V_{d-1}(P) > 2^{d-1}\|\vb\|_2\det(\Lambda) = 2^{d-1}\|\vb\|_2N_1\cdots N_d$, then $P\cap \Lambda^*$ contains a nonzero point.

    Therefore we may apply Corollary~\ref{cor:LLL} with dimension $d-1$, lattice $\Gamma$, and $V_0=2^{d-1}\|\vb\|_2N_1\cdots N_d$, and we obtain that there exists a basis $\vr_1^*,\dots, \vr_{d-1}^*$ such that
	\begin{equation*}\prod_{j=1}^{d-1}\|\vr^*_j\|_2 \leq 2^{\frac{(d-1)(d-2)}{4}} \cdot  \|\vb^*\|_2N_1N_2\cdots N_d,\end{equation*}
	so we have these linearly independent vectors as expected.
	\end{proof}
	
	From the set of vectors $\{\vr_j^*\}_{j=1}^{d-1}$ whose existence is guaranteed by Claim~\ref{claim:find-short-basis}, we obtain the set of vectors $\{\vr_j\}_{j=1}^{d-1}$, which are the row vectors of $M$. Then condition \eqref{item:nullspace-projection-coprime} is satisfied, since they are $d-1$ linearly independent vectors in $\Z^d$ and satisfy that $\vr_j^*\cdot \vb^* = 0$ for each $1\leq j\leq d-1$.
	
	For each $1\leq j\leq d-1$, we know that for any $\vx = (x_1, \dots, x_d)\in \Z^d$, $(M\vx)_j = \vr_j\cdot \vx$. Note that $\vr_j = (r_{j1}, \dots, r_{jd})$. We have that whenever $\vx\in [N_1]\times \cdots \times [N_d]$,
	\begin{equation}\label{eqn:imageisingrid}|\vr_j\cdot \vx| = \left|\sum_{i = 1}^d r_{ji}x_i\right| \leq \sum_{i=1}^d |r_{ji}N_i| = \|\vr^*_j\|_1.\end{equation}
	Let $N_j^* = 3\|\vr^*_j\|_1$ and $\mathbf{v} = (2\|\vr^*_j\|_1)_{j=1}^{d-1}$. Observe that this choice of parameters together with (\ref{eqn:imageisingrid}) ensures that $f_\vb([N_1]\times \cdots \times [N_d])\subseteq [N_1^*]\times \cdots \times [N_{d-1}^*]$. For each $1\leq j\leq d-1$, as $\vr_j\in \Z^d$ is nonzero, we have $r_{ji}$ is nonzero for some $i$, and hence $N^*_j \geq \|\vr^*_j\|_1 \geq |r_{ji}N_i| \geq N_i \geq \min_{1\leq i\leq d}N_i$. 
	
	Also, by condition \eqref{item:nullspace-projection-coprime}, elements in $f_\vb^{-1}(\vx^*)$ differ by multiples of $\vb$. Note that there are at most $\frac{1}{\|\vb^*\|} + 1 \leq \frac{2}{\|\vb^*\|}$ such elements in $[N_1]\times \cdots \times [N_d]$ for each fixed $\vx^*\in [N_1^*]\times \cdots \times [N_{d-1}^*]$. We have
	\[\frac{2}{\|\vb^*\|}N_1^* \cdots N_{d-1}^* \geq N_1\cdots N_d.\]
	It remains to show the other half of the inequality in \eqref{eqn:range-projection-coprime}. With $N_j^*$ as above, we have
	\[N_1^*\cdots N_{d-1}^* = \prod_{j=1}^{d-1}3 \|\vr^*_j\|_1 \leq \prod_{j=1}^{d-1}3\sqrt{d} \|\vr^*_j\|_2 \leq 3^{d-1}(\sqrt{d})^{d-1} 2^{\frac{(d-1)(d-2)}{4}}\|\vb^*\|_2N_1N_2\cdots N_d.\]
	Using that $\|\vb^*\|_2\leq \sqrt{d} \|\vb^*\|_{\infty}$, we have
	\[3^{d-1}(\sqrt{d})^{d-1} 2^{\frac{(d-1)(d-2)}{4}}\|\vb^*\|_2N_1N_2\cdots N_d \leq 2^{d^2}N_1N_2\cdots N_d\|\vb^*\|_{\infty},\]
	so condition \eqref{item:range-projection-coprime} is also satisfied. Here we use that $3^{d-1}(\sqrt{d})^{d} 2^{\frac{(d-1)(d-2)}{4}} \leq 2^{d^2}$ for $d\geq 2$.\end{proof}
	
	For any $\vb = (b_1, \dots, b_d)\in \Z^d\setminus \{\vzero\}$ whose entries are not coprime, we may apply the lemma above to $\vb/\gcd(b_1, \dots, b_d)$ instead. This gives the following corollary.
	\begin{corollary}\label{cor:projection}
	Let $d \geq 2$ be an integer, and $N_1, N_2, \dots, N_d$ be positive integers. Suppose that $\vb = (b_1, \dots, b_d)\in \Z^d$ is a nonzero vector satisfying that $\lambda = \lambda(\vb):= \max_{1\leq i\leq d} \frac{|b_i|}{\gcd(b_1, \dots, b_d)N_i} \leq 1$. Then there exists a linear map $f_\vb \in \Z^{(d-1)\times d}$ so that the following two conditions holds.
	\begin{enumerate}
		\item \label{item:nullspace-projection}For any $\vx_1, \vx_2 \in \Z^d$, $f_\vb(\vx_1) = f_\vb(\vx_2)$ if and only if $\vx_1 - \vx_2 = k\vb$ for some $k\in \Q$.
		\item \label{item:range-projection}There exist positive integers $N_1^*, N_2^*, \dots, N_{d-1}^* \geq \min_{1\leq i\leq d} N_i$ so that $\frac{1}{2}\leq \frac{N_1^*\cdots N_{d-1}^*}{\lambda N_1\cdots N_d} \leq 2^{d^2}$ and $f_\vb([N_1]\times \cdots \times [N_d])\subseteq [N_1^*] \times \cdots \times [N_{d-1}^*]$.
		\end{enumerate}
	\end{corollary}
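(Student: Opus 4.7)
The plan is to reduce directly to Lemma~\ref{lem:projection-coprime} by dividing out the greatest common divisor of the coordinates of $\vb$. Write $g := \gcd(b_1, \dots, b_d)$ and set $\vb' := \vb / g = (b_1/g, \dots, b_d/g) \in \Z^d$. By construction the coordinates of $\vb'$ are coprime, and the hypothesis $\lambda(\vb) \le 1$ translates into $|b_i'| = |b_i|/g \le \lambda N_i \le N_i$ for every $i$, so $\vb'$ meets the hypotheses of Lemma~\ref{lem:projection-coprime}. Let $f_{\vb'} : \Z^d \to \Z^{d-1}$ be the linear map supplied by that lemma, and define $f_\vb := f_{\vb'}$.

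For condition \eqref{item:nullspace-projection}, Lemma~\ref{lem:projection-coprime}\eqref{item:nullspace-projection-coprime} says that $f_{\vb'}(\vx_1) = f_{\vb'}(\vx_2)$ iff $\vx_1 - \vx_2 = k \vb'$ for some $k \in \Z$. Since $\vb' = \vb/g$, the condition $\vx_1 - \vx_2 = k\vb'$ with $k \in \Z$ is exactly the condition $\vx_1 - \vx_2 = (k/g)\vb$ with $k/g \in \Q$, which gives \eqref{item:nullspace-projection} (taking $k$ to range over all rationals whose denominators divide $g$ recovers precisely the integer multiples of $\vb'$).

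For condition \eqref{item:range-projection}, observe that $\max_{1\le i\le d} |b_i'|/N_i = \max_{1\le i\le d} |b_i|/(g N_i) = \lambda$ by the definition of $\lambda$. Applying Lemma~\ref{lem:projection-coprime}\eqref{item:range-projection-coprime} to $\vb'$ yields positive integers $N_1^*, \dots, N_{d-1}^* \ge \min_{1\le i\le d} N_i$ with $f_{\vb'}([N_1] \times \cdots \times [N_d]) \subseteq [N_1^*] \times \cdots \times [N_{d-1}^*]$ and
\[
	\tfrac{1}{2}\lambda N_1 \cdots N_d \le N_1^* \cdots N_{d-1}^* \le 2^{d^2} \lambda N_1 \cdots N_d,
\]
which is exactly the required inequality after dividing through by $\lambda N_1 \cdots N_d$. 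This completes the proof.

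This corollary is essentially bookkeeping rather than substance; I do not anticipate a genuine obstacle. The only subtlety is to make sure the ``$k \in \Q$'' in \eqref{item:nullspace-projection} (as opposed to ``$k \in \Z$'' in Lemma~\ref{lem:projection-coprime}) is accounted for correctly, which is why condition \eqref{item:nullspace-projection} is stated with rationals: the kernel of $f_\vb$ inside $\Z^d$ is generated by $\vb'$, and $\vb' = \vb/g$ is a rational, not integer, multiple of $\vb$ whenever $g > 1$.
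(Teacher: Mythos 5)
Your proposal is correct and follows exactly the paper's route: apply Lemma~\ref{lem:projection-coprime} to $\vb/\gcd(b_1,\dots,b_d)$ and observe that the kernel generated by $\vb' = \vb/g$ corresponds to rational multiples of $\vb$ landing in $\Z^d$. The paper states this reduction in a single sentence without spelling out the bookkeeping; your write-up simply fills in the same details.
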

	
	The linear map in the corollary above is the main tool for reduction from $\Z^d$ to $\Z^{d-1}$. We have the following simple relation between $f_\vb$ and $U^d(X, \vb, s)$.
	\begin{lemma}\label{lem:preimage-of-projection}
	Let $d\geq 2$ be an integer, and $N_1, \dots, N_d$ be positive integers. Let $X\subseteq [N_1]\times \cdots \times [N_d]$, let $\vb = (b_1, \dots, b_d)\in \Z^d\setminus \{\vzero\}$ with $\lambda = \lambda(\vb):= \max_{1\leq i\leq d} \frac{|b_i|}{\gcd(b_1, \dots, b_d)N_i}  \leq 1$. Let $s \leq \min_{i}N_i$ be a positive integer. Suppose that $f_\vb: \Z^d\to \Z^{d-1}$ is a linear map satisfying \eqref{item:nullspace-projection} in Corollary~\ref{cor:projection}. Then each element in $f_\vb(U^d(X, \vb, s))$ has at least $s$ and at most $\frac{2}{\lambda}$ preimages under $f_\vb$ in $U^d(X, \vb, s)$.
	\end{lemma}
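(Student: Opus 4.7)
The plan is to unpack what the two bounds mean in terms of the fibers $f_\vb^{-1}(\vy) \cap X$ and to exploit the nullspace condition \eqref{item:nullspace-projection} of $f_\vb$ in both directions. Write $g = \gcd(b_1, \dots, b_d)$ and $\vb' = \vb/g \in \Z^d$, so $\vb'$ has coprime entries and $\max_i |b'_i|/N_i = \lambda$. The key observation is that if $\vx_1, \vx_2 \in \Z^d$ satisfy $f_\vb(\vx_1) = f_\vb(\vx_2)$, then by \eqref{item:nullspace-projection} we have $\vx_1 - \vx_2 = k\vb$ for some $k \in \Q$; since $\vx_1 - \vx_2 \in \Z^d$ and the entries of $\vb'$ are coprime, this forces $kg \in \Z$, i.e.\ $\vx_1 - \vx_2$ is an integer multiple of $\vb'$.

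For the upper bound, fix $\vy \in f_\vb(U^d(X, \vb, s))$ and any preimage $\vx_0$. By the observation, every preimage in $X$ is of the form $\vx_0 + j\vb'$ with $j \in \Z$. To count how many such $j$ can lie in $[N_1] \times \cdots \times [N_d]$, I will choose the coordinate $i_0$ that achieves $|b'_{i_0}|/N_{i_0} = \lambda$ and note that the constraint $1 \leq (\vx_0)_{i_0} + jb'_{i_0} \leq N_{i_0}$ pins $j$ to an interval of length $(N_{i_0}-1)/|b'_{i_0}| \leq 1/\lambda$. Hence the number of admissible $j$ is at most $1/\lambda + 1 \leq 2/\lambda$ (using $\lambda \leq 1$), and this bounds the preimages in $X$, hence also in $U^d(X, \vb, s)$.

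For the lower bound, pick any $\vx \in U^d(X, \vb, s)$ mapping to $\vy$. By definition of $U^d$, the residue class $\{\vx' \in X : \vx' \equiv \vx \pmod \vb\}$ has at least $s$ elements. Every such $\vx'$ differs from $\vx$ by an integer multiple of $\vb$, hence a rational multiple, so \eqref{item:nullspace-projection} gives $f_\vb(\vx') = f_\vb(\vx) = \vy$. Moreover, each such $\vx'$ lies in $U^d(X, \vb, s)$ because it lies in the same residue class mod $\vb$ as $\vx$, which already has $\geq s$ elements. This produces at least $s$ preimages of $\vy$ inside $U^d(X, \vb, s)$.

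There is no real obstacle; the proof is essentially bookkeeping once the $\Q$-versus-$\Z$ multiples are sorted out. The only subtle point is the transition from rational multiples of $\vb$ (given by the nullspace property) to integer multiples of the primitive vector $\vb'$, which is exactly where the hypothesis $\gcd(b_1, \dots, b_d) = g$ enters via $\lambda = \max_i |b'_i|/N_i$, and this is what allows the clean $2/\lambda$ bound on fiber size.
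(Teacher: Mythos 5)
Your proof is correct and follows essentially the same approach as the paper: the upper bound comes from bounding how many integer multiples of $\vb/\gcd(b_1,\dots,b_d)$ can fit in the box via the coordinate achieving $\lambda$, and the lower bound comes from the residue-class structure defining $U^d$. The only difference is that you spell out why the rational multiple $k\vb$ guaranteed by condition \eqref{item:nullspace-projection} must be an integer multiple of the primitive vector $\vb' = \vb/\gcd(b_1,\dots,b_d)$ (via coprimality of the entries of $\vb'$), a step the paper's proof states without elaboration.
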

	\begin{proof}
	as $f_\vb$ satisfies \eqref{item:nullspace-projection}, we know that if $f_\vb(\vx_1) = f_\vb(\vx_2)$, then $\vx_1 - \vx_2$ is a multiple of $\vb/\gcd(b_1, \dots, b_d)$. From the definition of $\lambda$, we see that each element in $\Z^{d-1}$ has at most $\frac{1}{\lambda}+1 \leq \frac{2}{\lambda}$ preimages in $U^d(X, \vb, s)\subseteq [N_1]\times \cdots \times [N_d]$.
	
	Note that each element in $U^d(X, \vb, s)$ is in a residue class mod $\vb$ of size at least $s$. Again by condition \eqref{item:nullspace-projection}, elements in the residue class mod $\vb$ get mapped to the same element by $f_\vb$. Therefore, every element in $f_\vb(U^d(X, \vb, s))$ has at least $s$ preimages.
	\end{proof}

	Using Lemma~\ref{lem:preimage-of-projection} we derive the following lemma, which bounds the size of the intersection of two $U^d$ sets.
	\begin{lemma}\label{lem:single-intersection}
	Let $d\geq 2$ be a positive integer, and $N_1, \dots, N_d$ be positive integers. Let $X\subseteq [N_1]\times \cdots \times [N_d]$, let $\vb = (b_1, \dots, b_d)\in \Z^d\setminus \{\vzero\}$ with $\lambda = \lambda(\vb):= \max_{1\leq i\leq d} \frac{|b_i|}{\gcd(b_1, \dots, b_d)N_i}  \leq 1$. Let $s^* \leq s \leq \min_{i}N_i$ be positive integers. Suppose that $f_\vb: \Z^d\to \Z^{d-1}$ satisfies condition \eqref{item:nullspace-projection} in Corollary~\ref{cor:projection}, and $\vb'\in \Z^d$ satisfies that $f_\vb(\vb') \ne f_\vb(\vzero)$. Then
	\begin{equation}\label{eqn:single-intersection}
	|U^d(X, \vb, s) \cap U^d(X, \vb', s)| \leq \frac{s^*-1}{s}|U^d(X, \vb', s)| + \frac{2}{\lambda}|U^{d-1}(f_\vb(U^d(X, \vb, s)), f_\vb(\vb') - f_\vb(\vzero), s^*)|.
	\end{equation}
	\end{lemma}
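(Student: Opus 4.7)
The plan is to decompose $Y':=U^d(X,\vb',s)$ into its $\vb'$-congruence classes in $X$ and estimate $|Y\cap I'|$ for each such class $I'$, where $Y:=U^d(X,\vb,s)$. Write $\vb^{**} := f_\vb(\vb') - f_\vb(\vzero)$, which is nonzero by hypothesis. Partition $X$ into congruence classes modulo $\vb'$, and let $I'_1,\dots,I'_m$ be those of size at least $s$; then $Y'$ is the disjoint union of the $I'_k$, so
\[|Y\cap Y'| \;=\; \sum_{k=1}^m|Y\cap I'_k|,\]
which I would estimate by splitting indices $k$ according to whether $|Y\cap I'_k|<s^*$.

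For the classes with $|Y\cap I'_k|\leq s^*-1$, the lower bound $|I'_k|\geq s$ immediately gives $|Y\cap I'_k|\leq \frac{s^*-1}{s}|I'_k|$; summing over such $k$ contributes at most $\frac{s^*-1}{s}|Y'|$, which matches the first term on the right-hand side of \eqref{eqn:single-intersection}.

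For the classes with $|Y\cap I'_k|\geq s^*$, I would apply $f_\vb$. By condition \eqref{item:nullspace-projection} of Corollary~\ref{cor:projection}, $f_\vb$ identifies two points precisely when they differ by a rational multiple of $\vb$; since $\vb'\notin\Q\vb$ (because $\vb^{**}\neq \vzero$), the restriction of $f_\vb$ to the arithmetic progression $I'_k$ with common difference $\vb'$ is injective. Hence $f_\vb(Y\cap I'_k)$ has the same cardinality as $Y\cap I'_k$, lies in $f_\vb(Y)$, and forms an arithmetic progression with common difference $\vb^{**}$. In particular its elements all belong to a single congruence class modulo $\vb^{**}$ inside $f_\vb(Y)$, and this class has size at least $s^*$, so $f_\vb(Y\cap I'_k)\subseteq U^{d-1}(f_\vb(Y),\vb^{**},s^*)$.

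To finish, I would bound $\sum_k |f_\vb(Y\cap I'_k)|$ over the Case 2 indices by a double-counting argument. Since the sets $Y\cap I'_k$ are pairwise disjoint subsets of $Y$, each point $\vy\in f_\vb(Y)$ lies in $f_\vb(Y\cap I'_k)$ for at most $|f_\vb^{-1}(\vy)\cap Y|$ indices $k$, which by Lemma~\ref{lem:preimage-of-projection} is at most $2/\lambda$; moreover, this count is zero unless $\vy\in U^{d-1}(f_\vb(Y),\vb^{**},s^*)$ (by the previous paragraph). Therefore the Case 2 contribution is at most $\frac{2}{\lambda}|U^{d-1}(f_\vb(Y),\vb^{**},s^*)|$, and adding the two cases yields \eqref{eqn:single-intersection}. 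The main subtlety, in my view, is the double-counting in the last step: one must check that the fiber bound $2/\lambda$ from Lemma~\ref{lem:preimage-of-projection} translates into the correct multiplicity of image points across different $\vb'$-classes, while the per-class injectivity of $f_\vb$ is essentially forced by the hypothesis $f_\vb(\vb')\neq f_\vb(\vzero)$.
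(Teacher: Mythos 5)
Your proof is correct and follows essentially the same approach as the paper's: partition $U^d(X,\vb',s)$ into $\vb'$-congruence classes, split the classes by whether they contribute at least $s^*$ elements to the intersection with $U^d(X,\vb,s)$, bound the sparse case trivially via $|I'_k|\geq s$, and push the dense case through $f_\vb$ into $U^{d-1}(f_\vb(Y),\vb^{**},s^*)$ using injectivity of $f_\vb$ on each class and the $2/\lambda$ fiber bound from Lemma~\ref{lem:preimage-of-projection}. Your double-counting over pairs $(\vy,k)$ in the final step is a minor reorganization of the paper's preimage count but is the same argument.
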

	\begin{proof}
	For simplicity let $X_1 = U^d(X, \vb, s)$. Note that $f_\vb(X_1)$ is a subset of $\Z^{d-1}$.
	
	Partition the set $U^d(X, \vb', s)$ into nonempty residue classes mod $\vb'$. By definition, we know that each such residue class contains at least $s$ elements, so there are at most $\frac{1}{s}|U^d(X, \vb', s)|$ such residue classes. For each such residue class, there are two cases:
	\begin{itemize}
	\item the residue class contains at most $s^* - 1$ elements in $X_1\cap U^d(X, \vb', s)$;
	\item the residue class contains at least $s^*$ elements in $X_1\cap U^d(X, \vb', s)$.
	\end{itemize}
	We next upper bound the size of $X_1 \cap U^d(X, \vb', s)$. The number of elements in $X_1\cap U^d(X, \vb', s)$ contained in a residue class of the first case is at most $\frac{s^*-1}{s}|U^d(X, \vb', s)|$. It remains to estimate the number of elements contained in a residue class of the second case.
	
	We first show that, if $\vx$ is an element of $X_1\cap U^d(X, \vb', s)$, whose residue class mod $\vb'$ contains at least $s^*$ elements in $X_1\cap U^d(X, \vb', s)$, then
	\begin{equation}\label{eqn: f_b(x)}
	    f_\vb(\vx)\in U^{d-1}(f_\vb(X_1), f_\vb(\vb') - f_\vb(\vzero), s^*).
	\end{equation}
	%\todo{check the following two paragraphs.}
Let $I = \{\vx_1, \dots, \vx_k\}$ be the residue class mod $\vb'$ of $X_1\cap U^d(X, \vb', s)$ with $\vx = \vx_1$. Suppose that $I$ contains at least $s^*$ elements. Note that if $\vx_i = t\vb' + \vx_j$ for some integer $t\ne 0$, then
    $f_\vb(\vx_i) = f_\vb(\vx_j) + t(f_\vb(\vb') - f_\vb(\vzero))$. As $f_\vb(\vb') \ne f_\vb(\vzero)$, we know that $f_\vb(I) = \{f_\vb(\vx_i): 1\leq i\leq k\}$ consists of $k\geq s^*$ distinct elements, whose pairwise differences are multiples of $(f_\vb(\vb') - f_\vb(\vzero))$. In particular, there are at least $k \geq s^*$ elements in $f_\vb(X_1)$ that are congruent to $f_\vb(\vx)$ mod $(f_\vb(\vb') - f_\vb(\vzero))$. This proves \eqref{eqn: f_b(x)}.
	
	Note that by Lemma~\ref{lem:preimage-of-projection}, each element in $U^{d-1}(f_\vb(X_1), f_\vb(\vb') - f_\vb(\vzero), s^*)$ has at most $\frac{2}{\lambda}$ preimages in $X_1\cap U^d(X, \vb', s)$. Therefore, the number of elements contained in a residue class of the second case is at most $\frac{2}{\lambda}|U^{d-1}(f_\vb(X_1), f_\vb(\vb') - f_\vb(\vzero), s^*)|$. Putting these together, we have \eqref{eqn:single-intersection}.
	\end{proof}

	\begin{lemma}\label{lem:sum-over-intersection-n-dim}
	Let $d\geq 2$ be a positive integer and $N_1, \dots, N_d, s$ be positive integers with $\min_{1\leq i\leq d} N_i > s \geq 2$. Let $X\subseteq [N_1]\times \cdots \times [N_d]$, $\lambda_0\in (0, \frac{1}{s-1})$ and $B\subseteq [-\frac{N_1}{s-1}, \frac{N_1}{s-1}] \times \cdots \times [-\frac{N_d}{s-1}, \frac{N_d}{s-1}]$ such that each $(b_1, \dots, b_d)\in B$ is a nonzero integer point satisfying that $\lambda_0 \leq \max_{1\leq i\leq d} \frac{|b_i|}{\gcd(b_1, \dots, b_d)N_i}$. Let $\vb$ be an integer point in $B$, and $f_\vb:\Z^d\to \Z^{d-1}$ be a map satisfying condition \eqref{item:nullspace-projection} in Corollary~\ref{cor:projection}, and $s^*\leq s$ be a positive integer. Then we have
	\begin{equation}\begin{split}\label{eqn:sum-over-intersection-n-dim}
		\sum_{\vb'\in B}|U^d(X, \vb, s)\cap U^d(X, \vb', s)| & \leq  \frac{4}{s\lambda_0}|U^d(X, \vb, s)| + \frac{s^*-1}{s}\sum_{\vb'\in B}|U^d(X, \vb', s)| \\
		& \quad  + \frac{12}{s\lambda_0^2}\sum_{\vb^*\in \Z^{d-1}\setminus \{\vzero\}}|U^{d-1}(f_\vb(U^d(X, \vb, s)), \vb^*, s^*)|.
	\end{split}
	\end{equation}
	\end{lemma}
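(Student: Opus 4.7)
The plan is to split the sum over $\vb'\in B$ according to whether $\vb'$ is a rational multiple of $\vb$, i.e.\ whether $f_\vb(\vb') = f_\vb(\vzero)$, since Lemma~\ref{lem:single-intersection} requires the latter condition to fail. Write $B = B_1 \sqcup B_2$ where $B_1 = \{\vb'\in B : f_\vb(\vb') = f_\vb(\vzero)\}$ and $B_2 = B\setminus B_1$. By condition~\eqref{item:nullspace-projection} in Corollary~\ref{cor:projection}, vectors in $B_1$ are precisely the nonzero integer multiples of $\vb/g$ that lie in the box, where $g = \gcd(b_1,\dots,b_d)$.

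For the first piece, I would use the trivial bound $|U^d(X,\vb,s)\cap U^d(X,\vb',s)| \leq |U^d(X,\vb,s)|$ and count $|B_1|$. If $\vb' = k\vb/g \in B$, then the box constraint $|kb_i/g| \leq N_i/(s-1)$ forces $|k|\leq 1/((s-1)\lambda(\vb)) \leq 1/((s-1)\lambda_0)$, so $|B_1|\leq 2/((s-1)\lambda_0)$. Since $\lambda_0 < 1/(s-1)$ and $s\geq 2$, this is at most $4/(s\lambda_0)$, producing the first term on the right-hand side of \eqref{eqn:sum-over-intersection-n-dim}.

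For the second piece, each $\vb'\in B_2$ satisfies the hypothesis of Lemma~\ref{lem:single-intersection}, which gives
\[
|U^d(X,\vb,s)\cap U^d(X,\vb',s)| \leq \frac{s^*-1}{s}|U^d(X,\vb',s)| + \frac{2}{\lambda(\vb)}\bigl|U^{d-1}\bigl(f_\vb(U^d(X,\vb,s)),\, f_\vb(\vb')-f_\vb(\vzero),\, s^*\bigr)\bigr|.
\]
Summing the first term over $\vb'\in B_2$ (and extending to $B$) yields the second term on the right-hand side of \eqref{eqn:sum-over-intersection-n-dim}, while $2/\lambda(\vb) \leq 2/\lambda_0$ for the coefficient of the second term. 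The remaining job is to sum the $U^{d-1}$ terms over $\vb'\in B_2$ by reindexing via $\vb^* := f_\vb(\vb') - f_\vb(\vzero) \in \Z^{d-1}\setminus\{\vzero\}$.

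The delicate point, and really the only one, is bounding the multiplicity of this reindexing: for a fixed $\vb^*$, the preimages $\vb'\in B$ with $f_\vb(\vb') - f_\vb(\vzero) = \vb^*$ form an affine line $\vb'_0 + (\Z\cdot \vb/g)$ intersected with the box $B$, so there are at most $2/((s-1)\lambda_0) + 1 \leq 3/((s-1)\lambda_0) \leq 6/(s\lambda_0)$ such vectors (using $(s-1)\lambda_0 < 1$ and $s\geq 2$). Multiplying by the coefficient $2/\lambda_0$ gives the factor $12/(s\lambda_0^2)$ in the third term, completing the bound in \eqref{eqn:sum-over-intersection-n-dim}. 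The only real care required is in this multiplicity count and in the clean conversion $2/((s-1)\lambda_0)\leq 4/(s\lambda_0)$; everything else is just an application of Lemma~\ref{lem:single-intersection} and the partition.
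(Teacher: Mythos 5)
Your proof is correct and follows essentially the same route as the paper: split $B$ according to whether $f_\vb(\vb')=f_\vb(\vzero)$, bound the degenerate part trivially via a count of integer multiples of $\vb/g$ in the box, apply Lemma~\ref{lem:single-intersection} to the rest, and control the multiplicity of the reindexing $\vb'\mapsto f_\vb(\vb')-f_\vb(\vzero)$. Your constants and the absorption $2/((s-1)\lambda_0)+1\leq 3/((s-1)\lambda_0)\leq 6/(s\lambda_0)$ match the paper exactly.
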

	\begin{proof}
	Let us denote $\vb = (b_1, \dots, b_d)$. We first give an upper bound on the number of elements $\vb'\in B$ with $f_\vb(\vb') = f_\vb(\vzero)$. By condition \eqref{item:nullspace-projection}, we know that all such $\vb'$ are given by $k \vb/\gcd(b_1, \dots, b_d)$ for $k \in \Z$. If $k \vb/\gcd(b_1, \dots, b_d)$ is contained in $B\subseteq [-\frac{N_1}{s-1}, \frac{N_1}{s-1}] \times \cdots \times [-\frac{N_d}{s-1}, \frac{N_d}{s-1}]$, then we know that $\frac{|k|\cdot |b_i|}{\gcd(b_1, \dots, b_d)} \leq \frac{N_i}{s-1}$ for all $i$. Hence each choice of $k$ with $k \vb/\gcd(b_1, \dots, b_d)\in B$ satisfies that
	\[|k| \leq \min_{1\leq i\leq d} \frac{N_i\gcd(b_1, \dots, b_d)}{|b_i|(s-1)} \leq \frac{1}{(s-1)\lambda_0} \le  \frac{2}{s\lambda_0}.\]
	Therefore, the number of such $\vb'\in B$ is at most $\frac{4}{s\lambda_0}$ (noting that $k\ne 0$). Thus we have
	\begin{equation}\label{eqn:sum-over-intersection-n-dim-zero-part}
	\sum_{\vb'\in B: f_\vb(\vb') = f_\vb(\vzero)}|U^d(X, \vb, s)\cap U^d(X, \vb', s)| \leq \sum_{\vb'\in B: f_\vb(\vb') = f_\vb(\vzero)}|U^d(X, \vb, s)| \leq \frac{4}{s\lambda_0}|U^d(X, \vb, s)|.
	\end{equation}

	Now we consider the summation over all $\vb'\in B$ with $f_\vb(\vb') \ne f_\vb(\vzero)$. For each such $\vb'$, by Lemma~\ref{lem:single-intersection},
	\begin{equation}\label{eqn:two-U-intersection-n-dim}
		|U^d(X, \vb, s)\cap U^d(X, \vb', s)| \leq \frac{s^*-1}{s}|U^d(X, \vb', s)| + \frac{2}{\lambda}|U^{d-1}(f_\vb(U^d(X, \vb, s)), f_\vb(\vb') - f_\vb(\vzero), s^*)|.
	\end{equation}
	Observe that
	\[\sum_{\vb'\in B: f_\vb(\vb') \ne f_\vb(\vzero)}|U^d(X, \vb', s)| \leq \sum_{\vb'\in B}|U^d(X, \vb', s)|.\]
	Now, note that $\vb'\in B \subseteq [-\frac{N_1}{s-1}, \frac{N_1}{s-1}] \times \cdots \times [-\frac{N_d}{s-1}, \frac{N_d}{s-1}]$. Thus, by a similar argument as above, for each $\vb^*\in \Z^{d-1}\setminus \{\vzero\}$, the number of $\vb'\in B$ with $f_\vb(\vb') - f_\vb(\vzero) = \vb^*$ is at most $\frac{2}{(s-1)\lambda_0} + 1 \leq \frac{6}{s\lambda_0}$. Therefore we have
	\[\sum_{\vb'\in B: f_\vb(\vb')\ne f_\vb(\vzero)}|U^{d-1}(f_\vb(U^d(X, \vb, s)), f_\vb(\vb') - f_\vb(\vzero), s^*)| \leq \frac{6}{s\lambda_0} \cdot \sum_{\vb^* \in \Z\setminus 0}|U^{d-1}(f_\vb(U^d(X, \vb, s)), \vb^*, s^*)|.\]
	We sum \eqref{eqn:two-U-intersection-n-dim} over all $\vb'\in B$ with $f_\vb(\vb') \ne f_\vb(\vzero)$. Combining it with \eqref{eqn:sum-over-intersection-n-dim-zero-part}, we have
	\begin{equation}\label{eqn:lem-subset-counting-sub-n-dim}\begin{split}
		\sum_{\vb'\in B}|U^d(X, \vb, s)\cap U^d(X, \vb', s)| & = \sum_{\vb'\in B: f_\vb(\vb') = f_\vb(\vzero)}|U^d(X, \vb, s)\cap U^d(X, \vb', s)| \\
		& \quad + \sum_{\vb'\in B: f_\vb(\vb') \ne f_\vb(\vzero)}|U^d(X, \vb, s)\cap U^d(X, \vb', s)|\\
		& \leq  \frac{4}{s\lambda_0}|U^d(X, \vb, s)| + \frac{s^*-1}{s}\sum_{\vb'\in B}|U^d(X, \vb', s)|\\
		& \quad + \frac{12}{s\lambda_0^2}\sum_{\vb^* \in \Z^{d-1}\setminus \{\vzero\}}|U^{d-1}(f_\vb(U^d(X, \vb, s)), \vb^*, s^*)|.
	\end{split}
	\end{equation}

	This establishes the desired inequality \eqref{eqn:sum-over-intersection-n-dim}. \end{proof}
	
	Lemma \ref{lem:sum-over-intersection-n-dim} is a useful bound for those $\vb$ for which $\lambda(\vb)$ (as defined in Lemma \ref{lem:single-intersection}) is not too small. The following lemma shows that there are not many choices of $\vb$ for which $\lambda(\vb)$ is small.

	\begin{lemma}\label{lem:number-theory-2}
	Let $d\geq 2$ be a positive integer, $n_1, \dots, n_d\in \N$ and $\epsilon \in (0, 1)$. If $\frac{1}{\epsilon} \leq n_i$ for all $1\leq i\leq d$, then there are at most $6^d\epsilon n_1\cdots n_d$ nonzero points $(b_1, b_2, \dots, b_d)\in [-n_1, n_1]\times \cdots \times [-n_d, n_d]$ with $|b_i/\gcd(b_1, \dots, b_d)| \leq \epsilon n_i$ for all $1\leq i\leq d$.
	\end{lemma}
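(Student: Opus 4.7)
The plan is to parameterize each nonzero $\vb = (b_1,\dots,b_d)$ in the set by writing $\vb = g\mathbf{c}$, where $g = \gcd(b_1,\dots,b_d) \ge 1$ and $\mathbf{c} = (c_1,\dots,c_d)\in\Z^d$ is primitive (its entries have gcd $1$). The two defining constraints become $g|c_i| \le n_i$ and $|c_i| \le \epsilon n_i$ for every $i$. Therefore the count of interest equals
\[\sum_{\mathbf{c}\text{ primitive},\, |c_i|\le\epsilon n_i} \left\lfloor\min_{i:\,c_i\ne 0}\frac{n_i}{|c_i|}\right\rfloor \le \sum_{\mathbf{c}} \min_{i:\,c_i\ne 0}\frac{n_i}{|c_i|}.\]

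Next I would group the primitive vectors $\mathbf{c}$ by their support $I := \{i : c_i\ne 0\}$. The key step is, for $|I| = k \ge 2$, to replace the troublesome minimum by the geometric mean $\min_{i\in I}\frac{n_i}{|c_i|} \le \left(\prod_{i\in I}\frac{n_i}{|c_i|}\right)^{1/k}$, which factorizes over coordinates. Dropping the primitivity condition (an upper bound) and applying the integral comparison $\sum_{c=1}^M c^{-1/k} \le \frac{k}{k-1}M^{(k-1)/k}$, the contribution from each support $I$ with $|I|=k\ge 2$ is at most
\[\left(\prod_{i\in I} n_i\right)^{1/k} \prod_{i\in I} 2\sum_{c=1}^{\lfloor\epsilon n_i\rfloor} c^{-1/k} \le \left(\frac{2k}{k-1}\right)^k \epsilon^{k-1}\prod_{i\in I} n_i \le 4^k\epsilon^{k-1}\prod_{i\in I} n_i.\]
Since $n_j \ge 1/\epsilon$ for every $j$, one has $\prod_{i\in I} n_i \le \epsilon^{d-k}\prod_i n_i$, which converts this into $4^k\epsilon^{d-1}\prod_i n_i \le 4^k\epsilon\prod_i n_i$ (using $d \ge 2$). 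Summing over $I$ with $|I|=k$ and then over $k$ gives $\sum_{k=2}^d\binom{d}{k}4^k \le 5^d$ times $\epsilon\prod_i n_i$.

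The case $|I|=1$ requires separate handling because the geometric mean bound degenerates. In that case, primitivity forces $\mathbf{c} = \pm e_j$ for some $j$, which allows exactly $n_j$ choices of $g$ per sign, for a total of $2\sum_j n_j$. Using $n_j \le \epsilon^{d-1}\prod_i n_i$, this contributes at most $2d\epsilon\prod_i n_i$ for $d\ge 2$. Combining all cases yields a total of at most $(5^d + 2d)\epsilon\prod_i n_i \le 6^d\epsilon\prod_i n_i$ for $d\ge 2$ (the final inequality follows from $6^d - 5^d \ge d\cdot 5^{d-1} \ge 2d$), as required. I expect the main obstacle to be the geometric mean step, which is what allows the awkward minimum to decouple into a product so that the inner sums over individual coordinates can be bounded explicitly; after that, the estimation reduces to bookkeeping with binomial coefficients and the hypothesis $n_j\ge 1/\epsilon$.
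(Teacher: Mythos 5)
Your proof is correct, and it takes a genuinely different route from the paper's. You parameterize by a primitive direction $\mathbf{c}$ and a stretch factor $g$, group by the support $I$ of $\mathbf{c}$, and then use the min--geometric-mean inequality to decouple $\min_{i\in I} n_i/|c_i|$ into a product of one-variable sums that can be bounded by an integral comparison. The paper instead splits coarsely into tuples with at least one zero coordinate (a crude bound of $3^d d\, \epsilon\, n_1\cdots n_d$, which absorbs all of your $|I|<d$ cases at once) versus tuples with all coordinates nonzero; for the latter it parameterizes directly by $k=\gcd(b_1,\dots,b_d)$ and splits the sum over $k$ into $k\le 1/\epsilon$ (where each coordinate has $\le 2\epsilon n_i$ choices) and $k>1/\epsilon$ (where each has $\le 2n_i/k$ choices), summing a geometric series in $k$. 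The paper's argument is shorter and needs no AM--GM trick, but it hides the contribution of low-support vectors in a single crude estimate; your argument is slightly longer but tracks the support more uniformly, showing that the $|I|=k$ contribution carries an $\epsilon^{d-1}$ factor for every $k\ge 1$. Both routes arrive at the same $6^d \epsilon\, n_1\cdots n_d$ bound. One small presentational nit: you should state explicitly that for $|I|=1$ the hypothesis $\epsilon n_j\ge 1$ is what makes $\mathbf{c}=\pm e_j$ admissible in the first place (you use $n_j\ge 1/\epsilon$ only for the final size estimate), though this is easy to supply.
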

	\begin{proof}
	For each $i$, the number of tuples with $b_i = 0$ is given by
	\[\prod_{j\ne i}(2n_j+1) \leq 3^dn_1\cdots n_d\cdot \frac{1}{n_i} \leq 3^d\epsilon n_1\cdots n_d.\]
	Hence the number of such tuples with at least one zero entry is at most $3^dd\epsilon n_1\cdots n_d$.
	
	We may next only consider the tuples with nonzero entries. Suppose that $\gcd(b_1, \dots, b_d) = k$. For any given $k$, we know that $b_i' = b_i/k$ satisfies that $|b_i'| \leq n_i/k$. From the problem condition, we further know that $|b_i'|\leq \epsilon n_i$. Hence when $k$ is fixed, the number of such tuples is at most $(2\epsilon)^d n_1\cdots n_d$ if $k\leq \frac{1}{\epsilon}$, and $(2/k)^dn_1\cdots n_d$ if $k > \frac{1}{\epsilon}$. Thus summing over $k$, we know that the number of such tuples $(b_1, b_2, \dots, b_d)$ is at most
	\[\sum_{1\leq k \leq \frac{1}{\epsilon}} 2^d\epsilon^dn_1\cdots n_d + \sum_{\frac{1}{\epsilon} < k} 2^d\frac{n_1\cdots n_d}{k^d} \leq 2^d\epsilon^{d-1} n_1\cdots n_d + 2^d\cdot 2\epsilon^{d-1} n_1\cdots n_d = 3\cdot 2^d\epsilon^{d-1} n_1\cdots n_d.\]
	Therefore the total number of such tuples is at most
	\[3^dd\epsilon n_1\cdots n_d + 3\cdot 2^d\epsilon^{d-1} n_1\cdots n_d \leq 6^d\epsilon n_1\cdots n_d. \]
	%since $6^d \geq d\cdot 3^d + 3\cdot 2^d$ when $d\geq 2$.
	\end{proof}
 If $s\geq 2$ and $U^d(X, \vb, s)$ is nonempty, then $\vb\in [-\frac{N_1}{s-1}, \frac{N_1}{s-1}] \times \cdots \times [-\frac{N_d}{s-1}, \frac{N_d}{s-1}]$. In the lemma above we pick $n_i = \frac{N_i}{s-1}$. Then the lemma above gives an upper bound on the number of nonzero points $\vb$ whose $\lambda$ value (as defined in Lemma \ref{lem:single-intersection}) is at most $\frac{\epsilon}{s-1}$.

Finally, we need the following lemma.
	\begin{lemma}\label{lem:sum-of-intersection}
	Let $X$ be a set of size $m > 0$. Let $\{A_i\}_{i\in I}$ be a family of subsets of $X$ over indices $i\in I$. Then we have
	\[\sum_{i, j\in I}|A_i\cap A_j| \geq \frac{1}{m}\left(\sum_{i\in I}|A_i|\right)^2.\]
	\end{lemma}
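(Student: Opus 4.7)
The plan is a standard double-counting argument followed by Cauchy--Schwarz. For each $x \in X$, let $d(x) := |\{i \in I : x \in A_i\}|$ denote the ``degree'' of $x$, i.e.\ the number of sets in the family that contain $x$.

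First I would rewrite both sides by swapping the order of summation. Counting pairs $(x, i)$ with $x \in A_i$ by first fixing $i$ and then by first fixing $x$ gives
\[
\sum_{i \in I} |A_i| \;=\; \sum_{x \in X} d(x).
\]
Similarly, counting triples $(x, i, j)$ with $x \in A_i \cap A_j$ gives
\[
\sum_{i, j \in I} |A_i \cap A_j| \;=\; \sum_{x \in X} d(x)^2.
\]

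Then I would apply the Cauchy--Schwarz inequality (equivalently, the power mean inequality) to the sequence $(d(x))_{x \in X}$:
\[
\sum_{x \in X} d(x)^2 \;\geq\; \frac{1}{|X|}\Bigl(\sum_{x \in X} d(x)\Bigr)^2 \;=\; \frac{1}{m}\Bigl(\sum_{i \in I} |A_i|\Bigr)^2,
\]
which is exactly the desired bound. There is no real obstacle here; the only thing to be careful about is that $X$ is required to be nonempty ($m > 0$) so that the division by $m$ makes sense, which is given in the hypothesis.
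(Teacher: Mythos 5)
Your proof is correct and follows essentially the same approach as the paper: double-counting triples $(x,i,j)$ to express both sides in terms of the degree function, then applying Cauchy--Schwarz. The paper writes $m_x$ where you write $d(x)$, but the argument is identical.
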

	\begin{proof}
	We count the number of tuples in $T = \{(x, i, j)\in X\times I\times I: x\in A_i\cap A_j\}$. Note that if we fix $i$ and $j$, the number of choices for $x$ is exactly $|A_i\cap A_j|$. Hence we have $|T| = \sum_{i, j\in I}|A_i\cap A_j|.$
% 	\begin{equation}\label{eqn:sum-of-interaction-LHS}
% 	    |T| = \sum_{i, j\in I}|A_i\cap A_j|.
% 	\end{equation}

	For each $x\in X$, let $m_x = |\{i\in I: x\in A_i\}|$, the number of sets $A_i$ that contain $x$. First we see that $\sum_{x\in X}m_x = \sum_{i\in I}|A_i|$.
% 	\begin{equation}
% 	    \sum_{x\in X}m_x = \sum_{i\in I}|A_i|.
% 	\end{equation}
	Moreover, when counting $X$, once we fix $x\in X$, the number of choices for $(i, j)$ is $m_x^2$, so we have $ |T| = \sum_{x\in X}m_x^2.$
% 	\begin{equation}
% 	    |T| = \sum_{x\in X}m_x^2.
% 	\end{equation}
	By the Cauchy-Schwarz inequality, we have
	\[m\sum_{i, j\in I}|A_i\cap A_j| = m|T| = |X|\left(\sum_{x\in X}m_x^2\right) \geq \left(\sum_{x\in X}m_x\right)^2 = \left(\sum_{i\in I}|A_i|\right)^2.\]
	This gives the desired inequality.
	\end{proof}

	We next prove Lemma~\ref{lem:counting-subsets-3}. The proof is by induction on $d$. The following proposition handles the base case $d = 1$ and is due to 
	Matou\v sek and Spencer \cite{MS}.
	\begin{proposition}[Proposition 4.1 in \cite{MS}]\label{prop:MS-4.1}There exists an absolute constant $C$ such that the following holds. For positive integers $N$ and $m$, if $X\subseteq [N]$ is a subset of size $m$ and $s\geq 5\sqrt{m}$, then 
		\[\sum_{b\in \Z \setminus \{0\}}|U^1(X, b, s)| \leq C\frac{N^{\frac12}m^{\frac32}}{s}.\]
	\end{proposition}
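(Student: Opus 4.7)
The plan is a double-counting argument reducing the estimate to an additive-energy/divisor sum. For each $b\ne 0$ write $r_b(a) := |\{x\in X: x\equiv a\pmod b\}|$. Using $r_b(a) \le r_b(a)^2/s$ on the set where $r_b(a)\ge s$, I get the second-moment bound
\[
s\cdot |U^1(X,b,s)| \;\le\; \sum_a r_b(a)^2 \;=\; m + R(b),\qquad R(b) := |\{(x_1,x_2)\in X^2 : x_1\ne x_2,\ b\mid x_1-x_2\}|.
\]
Since $U^1(X,b,s)=\emptyset$ whenever $|b| > L := \lfloor N/(s-1)\rfloor$, summing over $0<|b|\le L$ reduces the problem to controlling $2Lm + \sum_{0<|b|\le L} R(b)$. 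The first term satisfies $2Lm \le 4Nm/s$, and the hypothesis $s\ge 5\sqrt{m}$ (hence $s^2\ge 25m$) is exactly what is needed to absorb this into the target; so the work lies in bounding $\sum_b R(b)$.

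To do this, I swap the order of summation. Letting $r(d) := |\{(x,y)\in X^2 : x-y=d\}|$ and $\sigma_0^{\le L}(d)$ denote the number of positive divisors of $d$ at most $L$, I obtain
\[
\sum_{0<|b|\le L} R(b) \;=\; 2\sum_{d=1}^{N} r(d)\,\sigma_0^{\le L}(d).
\]
Cauchy--Schwarz together with the additive-energy bound $\sum_d r(d)^2 \le m^3$ and a standard estimate on $\sum_{d\le N}\sigma_0^{\le L}(d)^2$ then gives a bound of the correct shape $m^{3/2}N^{1/2}$. Combining with Step~1 yields the target $O(N^{1/2}m^{3/2}/s)$ on $\sum_b|U^1(X,b,s)|$ up to polylogarithmic factors.

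The main obstacle is removing the polylogarithmic loss. Direct Cauchy--Schwarz on the divisor sum produces $\sum_{d\le N}\sigma_0^{\le L}(d)^2 = O(N(\log L)^3)$, which introduces a spurious $(\log N)^{3/2}$ factor absent from the target. The log-free bound demands an argument that genuinely exploits $s\ge 5\sqrt m$, which forces the number of heavy residue classes per $b$ to be at most $m/s \le \sqrt m/5$. I expect to handle this by a dyadic decomposition $|b|\in[B,2B]$: for small scales $B\le B_0$ apply the trivial bound $|U^1(X,b,s)|\le m$, and for larger scales combine the refined bound $|U^1(X,b,s)|\le (m/s)(N/|b|+1)$ with a per-scale second-moment estimate on $\sum_{B\le|b|\le 2B}R(b)$, choosing $B_0\asymp \sqrt{mN}/s$ to balance the two contributions. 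Summing a geometric series in $\log_2(L/B_0)=O(\log(N/m))$ dyadic levels then risks reintroducing a log, so the crucial technical point is to upgrade one of the per-scale estimates (most naturally the divisor-sum side) to a bound without scale-dependent factors. I anticipate this is where the bulk of the Matou\v{s}ek--Spencer argument goes, likely using a direct combinatorial counting of pairs in heavy classes that sidesteps $\sigma_0^{\le L}$ altogether.
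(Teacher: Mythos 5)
The paper imports this proposition from Matou\v sek--Spencer \cite{MS} as a black box and does not reprove it, so there is no in-paper argument to compare against; what follows is an assessment of your proposal on its own terms.

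Your absorption of the $2Lm/s$ term is incorrect as stated: it requires $2Lm/s\le C N^{1/2}m^{3/2}/s$, i.e.\ (after $2Lm\le 4Nm/s$) $4\sqrt{N/m}\le Cs$, which does not follow from $s\ge 5\sqrt m$ when $m$ is small compared with $N$. The repair is local rather than global: if $U^1(X,b,s)\ne\emptyset$ then $|U^1(X,b,s)|\ge s$, so $s^2\le m+R(b)$, and together with $s^2\ge 25m$ this yields $m\le R(b)/24$, hence $m+R(b)\le\tfrac{25}{24}R(b)$ for every contributing $b$ and the $m$-term can simply be dropped.

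The decisive gap, however, is that $\sum_{0<|b|\le L}R(b)$ is provably larger than $N^{1/2}m^{3/2}$ by a logarithm, so your reduction cannot yield a log-free bound no matter how cleverly the resulting divisor sum is treated. Take $X=[N]$ (so $m=N$) and $s=\lceil 5\sqrt N\rceil$, giving $L\asymp\sqrt N$. Here $R(b)\asymp N^2/b$, hence $\sum_{0<|b|\le L}R(b)\asymp N^2\log N$, while $N^{1/2}m^{3/2}=N^2$; after dividing by $s$ your reduction bounds $\sum_b|U^1(X,b,s)|$ by $\Omega(N^{3/2}\log N)$, yet the proposition claims $O(N^{3/2})$, which is the true order of the left side in this example. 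The loss is in the pointwise inequality $s|U^1(X,b,s)|\le m+R(b)$ itself: it is sharp only for $|b|\asymp N/s$ and overcounts by roughly $N/(s|b|)$ at smaller $|b|$, and these factors sum to a harmonic logarithm. So the obstruction you attribute to the Cauchy--Schwarz step (and to $\sum_{d\le N}\sigma_0^{\le L}(d)^2$, which is in any case genuinely $\Theta(N(\log L)^3)$ --- the diagonal alone gives $\sum_{b\le L}N/b$) is really located one step earlier. Your dyadic hybrid in the final paragraph, which uses the trivial bound $|U^1(X,b,s)|\le m$ at small scales, is the right instinct and in fact handles the $X=[N]$ example; but it is not carried out, and the missing ingredient is a bound on $|U^1(X,b,s)|$ at intermediate scales that does not pass through $R(b)$, not a log-free per-scale divisor estimate.
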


	Now we have all the tools to set up the proof of Lemma~\ref{lem:counting-subsets-3}. We first describe the proof idea.
	Let us consider a fixed $d \geq 2$ with the induction hypothesis that the statement holds for $d-1$. Since we are to run an induction, the crux of the proof is to apply the induction hypothesis. Corollary~\ref{cor:projection} enables us to project the $d$-dimensional set $X\subseteq [N_1]\times \cdots \times [N_d]$ to a $(d-1)$-dimensional set $X^*\subseteq [N_1^*]\times \cdots \times [N_{d-1}^*]$ for some set $X^*$ and integers $N_j^*$ for $1\leq j\leq d-1$. Let $\rho = \frac{|X|}{N_1\cdots N_d}$ and $\rho^* = \frac{|X^*|}{N_1^*\cdots N_{d-1}^*}$ be the densities of the sets in the grids that they are subsets of. Fix $\epsilon = \rho^\gamma$ where $\gamma$ is chosen to be the exponent of $\rho$ in \eqref{eqn:counting-subsets-3}. As Lemma~\ref{lem:number-theory-2} says that only $O_d(\epsilon)$-fraction of $\vb$'s satisfy $\lambda(\vb) < \frac{\epsilon}{s}$, it allows us to only focus on $\vb$ with $\lambda(\vb)$ roughly $\frac{1}{s}$, off by a factor of at most $O_d(\epsilon) = O_d(\rho^\gamma)$. It follows that we can estimate both $N_1^*\cdots N_{d-1}^*$ and $X^* = f_\vb(U^d(X, \vb, s))$ within a factor of $\rho^{O_d(\gamma)}$ by applying Corollary~\ref{cor:projection} and  Lemma~\ref{lem:preimage-of-projection} respectively. Thus we can estimate $\rho^*$ within a factor of $\rho^{O_d(\gamma)}$. Finally we would like to apply Lemma~\ref{lem:sum-over-intersection-n-dim} and combine that with Lemma~\ref{lem:sum-of-intersection} to get the desired bound. Together these two lemmas give us an upper bound on the sum of the sizes of the $U^d(X,\vb,s)$ over all $\vb$ in a carefully chosen set $B\subseteq \mathbb{Z}^d$ in which the $i$-th coordinate is at most $N_i/(s-1)$ in absolute value for each $i$. 
	
	In the bound in Lemma~\ref{lem:sum-over-intersection-n-dim}, it is not hard to see that the first term on the right hand side is of lower order. By choosing $s^* = s\rho^\gamma$, we can save a factor of $\rho^\gamma$ in the second term. For the third term, we apply the induction hypothesis to save a factor of $(\rho^*)^{\gamma^*}$ (where the bound is expressed in terms of $|X^*|$, $N_1^*\cdots N_{d-1}^*$ and $\rho^*$). Since we can estimate each of them within a factor of $\rho^{O_d(\gamma)}$, we conclude that we save a factor of $\rho^{\gamma^* - O_d(\gamma)}$ in the third term. We can make $\gamma$ small enough so that $\gamma^* - O_d(\gamma) \geq \gamma$. In summary we save a factor of $O_d(\rho^\gamma)$ in all three terms, which is exactly what we need in Lemma~\ref{lem:counting-subsets-3}.

We next recall the statement of Lemma~\ref{lem:counting-subsets-3} and prove it.
	\counting*
	\begin{proof}[Proof of Lemma~\ref{lem:counting-subsets-3}]
	Let $C_0 > 1$ be an absolute constant that satisfies Proposition~\ref{prop:MS-4.1}. We prove by induction on $d$ that the statement holds for $C = C_0\cdot 5^d\cdot 2^{d^3}$. We may assume $N_1 = \min_{1\leq i\leq d}N_i$.
	
	If $m = 0$ or if $N_1 = \cdots = N_d = 1$, then the statement trivially holds. Hence we may assume that $m \geq 1$ and $N_1\cdots N_d > 1$. Therefore, noting that $\frac{1}{d+1} - \frac{\delta}{4^d(d+1)} > 0$, we have that $s$ satisfies
	\begin{equation}\label{eqn:lower-bound-s}
	    s \geq (N_1\cdots N_d)^{\frac{1}{d+1}}\rho^{\frac{\delta}{4^d(d+1)}} = (N_1\cdots N_d)^{\frac{1}{d+1} - \frac{\delta}{4^d(d+1)}}m^{\frac{\delta}{4^d(d+1)}} \geq (N_1\cdots N_d)^{\frac{1}{d+1} - \frac{\delta}{4^d(d+1)}}.
	\end{equation}
	It follows that $s\geq 2$. Also note that $\rho \leq 1$. We know that $2\leq s \leq \min_{1\leq i\leq d}N_i$. Hence we have
	\begin{equation}
	    \sum_{\vb\in \Z^d\setminus \{\vzero\}}|U^d(X, \vb, s)|\leq |X| \cdot \prod_{i=1}^d \left(4\frac{N_i}{s}+1\right) \leq m \cdot \prod_{i=1}^d 5\frac{N_i}{s} = 5^d\frac{mN_1\cdots N_d}{s^d}
	\end{equation}
	by Lemma~\ref{lem:counting-U-1}. Therefore the statement holds if $C\rho^{\frac{\min(\beta, \delta)}{4^d(d+2)!}} \ge 5^d$. Hence we may assume that $\rho^{\frac{\min(\beta, \delta)}{4^d(d+2)!}} < \frac{5^d}{C} = 2^{d^3}C_0 < 2^{d^3}$, or equivalently $\rho < 2^{-\frac{(d+2)!\cdot 4^dd^3}{\min(\beta, \delta)}} =: \rho_0$.
	
	We prove the base case $d = 1$ using Proposition~\ref{prop:MS-4.1}. Note that $\frac{\delta}{4^d(d+1)} = \frac{\delta}{8} \leq 1/8$ and $\frac{\min(\beta, \delta)}{4^d(d+2)!} \leq 1/24$. As we assumed that $\rho \leq \rho_0 = 2^{-\frac{24}{\min(\beta, \delta)}} < 5^{-\frac{8}{3}}$, we have $\sqrt{N_1}\rho^{\frac{\delta}{4^d(d+1)}} \geq \sqrt{N_1\rho} \rho^{-\frac{3}{8}}\geq 5\sqrt{m}.$
If $s\geq \sqrt{N_1}\rho^{\frac{\delta}{4^d(d+1)}} \geq 5\sqrt{m}$, then by Proposition~\ref{prop:MS-4.1} we have the desired inequality for $d = 1$
	\[\sum_{b\in \Z\setminus \{\vzero\}}|U^1(X, b, s)| \leq C_0\frac{mN_1}{s}\rho^\frac{1}{2} \leq C\frac{mN_1}{s}\rho^\frac{\min(\beta, \delta)}{4^d\cdot (d+2)!}.\]

	We next show the desired bound for $d \geq 2$, assuming the induction hypothesis for $d^* = d - 1$.
	Let $\gamma = \frac{\min(\beta, \delta)}{4^{d}\cdot (d+2)!}$ and $\epsilon = \rho^\gamma$. As $|U^d(X, \vb, s)|$ is zero if $\vb$ is not a nonzero integer point in $[-\frac{N_1}{s-1}, \frac{N_1}{s-1}]\times \cdots \times [-\frac{N_d}{s-1}, \frac{N_d}{s-1}]$, we may ignore these points in the summation. Let $B_0$ be the set of nonzero integer points in $[-\frac{N_1}{s-1}, \frac{N_1}{s-1}]\times \cdots \times [-\frac{N_d}{s-1}, \frac{N_d}{s-1}]$. Let $B_1$ be the set of points $\vb = (b_1, \dots, b_d)$ in $B_0$ for which $|b_i/\gcd(b_1, \dots, b_d)| \leq \epsilon \frac{N_i}{s-1}$ for all $1\leq i\leq d$. Let $B_2$ be the set of nonzero integer points $\vb$ in $B_0\setminus B_1$ for which $|U^d(X, \vb, s)| \leq \epsilon m$. Let $B = B_0\setminus (B_1\cup B_2)$. Therefore
	\begin{equation}\label{eqn:partition-sum}
	\sum_{\vb\in \Z^d\setminus \{\vzero\}} |U^d(X, \vb, s)| = \sum_{\vb\in B_1} |U^d(X, \vb, s)| + \sum_{\vb\in B_2} |U^d(X, \vb, s)| + \sum_{\vb\in B} |U^d(X, \vb, s)|.
	\end{equation}
	We estimate each term on the right hand side of \eqref{eqn:partition-sum}. As $s \leq N_{1}\rho^\beta \leq N_i\rho^\beta$, we have $n_i:= \frac{N_i}{s-1} \geq \rho^{-\beta} \geq \rho^{-\gamma} = \frac{1}{\epsilon}$. Hence we may apply Lemma~\ref{lem:number-theory-2} and conclude that
	\[|B_1| \leq 6^d \epsilon n_1\cdots n_d = 6^d\epsilon \frac{N_1\cdots N_d}{(s-1)^d} \leq 12^d\epsilon \frac{N_1\cdots N_d}{s^d} = 12^d \rho^\gamma \frac{N_1\cdots N_d}{s^d}.\]
	For each $\vb\in B_1$, we have $U^d(X, \vb, s) \subseteq X$, so $|U^d(X, \vb, s)| \leq m$. It follows that
	\begin{equation}\label{eqn:partition-sum-term-1}
	\sum_{\vb\in B_1} |U^d(X, \vb, s)|\leq |B_1|m \leq 12^d\rho^\gamma\frac{mN_1\cdots N_d}{s^d}.
	\end{equation}
	As $B_0\subseteq [-\frac{N_1}{s-1}, \frac{N_1}{s-1}]\times \cdots \times [-\frac{N_d}{s-1}, \frac{N_d}{s-1}]$ and $s\leq N_1\rho^\beta \leq N_i$, we have
	\[|B_0| \leq \prod_{i = 1}^d \left(2\frac{N_i}{s-1} + 1\right) \leq  \prod_{i = 1}^d \left(4\frac{N_i}{s} + 1\right) \leq 5^d\frac{N_1\cdots N_d}{s^d}.\]
	Therefore as $B_2\subseteq B_0$, we have that
	\begin{equation}\label{eqn:partition-sum-term-2}
	\sum_{\vb\in B_2} |U^d(X, \vb, s)| \leq |B_2| \cdot \epsilon m \leq 5^d\frac{N_1\cdots N_d}{s^d} \cdot \epsilon m = 5^d\rho^\gamma\frac{mN_1\cdots N_d}{s^d}.
	\end{equation}
Observe that $B\subseteq B_0$, and it follows
	\begin{equation}\label{eqn:bound-B-size}
	|B|\leq |B_0| \leq 5^d \frac{N_1\cdots N_d}{s^d}.
	\end{equation}
	Here \eqref{eqn:partition-sum-term-2} gives an upper bound on the second term in \eqref{eqn:partition-sum}.
	Finally we bound the third term. By Lemma~\ref{lem:sum-of-intersection}, noting that $\{U^d(X, \vb, s)\}_{\vb\in B}$ is a family of subsets of $X$, we have
	\begin{equation}\label{eqn:partition-sum-term-3-intersection}
	\sum_{\vb\in B}\sum_{\vb'\in B}|U^d(X, \vb, s)\cap U^d(X, \vb', s)| \geq \frac{1}{m}\left(\sum_{\vb\in B}|U^d(X, \vb, s)\right)^2.
	\end{equation}
We next give an upper bound on $\sum_{\vb'\in B}|U^d(X, \vb, s)\cap U^d(X, \vb', s)|$ for fixed $\vb\in B$ by Lemma~\ref{lem:sum-over-intersection-n-dim}. Before we can apply it, we need to make a few preparations to ensure that the conditions are satisfied.
	
	Since we have excluded elements in $B_1$, for any $\vb = (b_1, \dots, b_d) \in B$ there exists some index $i$ for which $|b_i/\gcd(b_1, \dots, b_d)| > \epsilon n_i = \epsilon \frac{N_i}{s-1}$. This implies that
	\begin{equation}\label{eqn:bound-lambda}
	\lambda_\vb := \max_{1\leq i\leq d} \frac{|b_i|}{\gcd(b_1, \dots, b_d)N_i} \geq \frac{\epsilon}{s-1} > \frac{\epsilon}{s}.
	\end{equation}
	Meanwhile, as $|b_i| \leq \frac{N_i}{s-1}$ for each $i$, we know that $\lambda_\vb \leq \frac{1}{s-1} \leq \frac{2}{s}$. Therefore $\lambda_\vb\in (\frac{\epsilon}{s}, \frac{2}{s}]$ for all $\vb\in B$. Hence $B$ satisfies the conditions in Lemma~\ref{lem:sum-over-intersection-n-dim} for $\lambda_0 := \frac{\epsilon}{s}$. 
	
	We fix an arbitrary $\vb\in B$. By Corollary~\ref{cor:projection} there exists a linear map $f_\vb: \Z^d\to \Z^{d-1}$ that satisfies conditions \eqref{item:nullspace-projection} and \eqref{item:range-projection}. Let $s^* = \lceil\epsilon s\rceil$. Now we know that $\vb$, $f_\vb$, and $s^*$ satisfy the conditions in Lemma~\ref{lem:sum-over-intersection-n-dim}. We apply it and get
	\begin{equation}\begin{split}\label{eqn:single-sum}
		\sum_{\vb'\in B}|U^d(X, \vb, s)\cap U^d(X, \vb', s)| & \leq  \frac{4|U^d(X, \vb, s)|}{s\lambda_0} + \frac{s^*-1}{s}\sum_{\vb'\in B}|U^d(X, \vb', s)| \\
		& \quad  + \frac{12}{s\lambda_0^2}\sum_{\vb^*\in \Z^{d-1}\setminus \{\vzero\}}|U^{d-1}(f_\vb(U^d(X, \vb, s)), \vb^*, s^*)|.
	\end{split}
	\end{equation}
	For the third term on the right hand side of \eqref{eqn:single-sum}, we would like to apply the induction hypothesis. 
To do this, we need to verify the various conditions in the statement by proving following claims. 
	
	As $f_\vb$ satisfies condition \eqref{item:range-projection} in Corollary~\ref{cor:projection}, there exist positive integers $N_1^*, \dots, N_{d-1}^*$ such that $f_\vb([N_1]\times \cdots \times [N_d]) \subseteq [N_1^*]\times \cdots \times [N_{d-1}^*]$, 
	\begin{equation}\label{eqn:bound-product-N^*-as-lambda}
	\frac{1}{2}\lambda_\vb N_1\cdots N_d \leq N_1^* \cdots N_{d-1}^* \leq 2^{d^2}\lambda_\vb N_1\cdots N_d,
	\end{equation}
  Let $M := \min_{1\leq i\leq d-1}N_i^*  \geq N_1$. As $\lambda_\vb\in (\frac{\epsilon}{s}, \frac{2}{s}]$, we have
	\begin{equation}\label{eqn:bound-product-N^*}
	\frac{\epsilon}{2s} N_1\cdots N_d \leq N_1^* \cdots N_{d-1}^* \leq \frac{2^{d^2+1}}{s}N_1\cdots N_d.
	\end{equation}
	Let $X^* = f_\vb(U^d(X, \vb, s))$. As $f_\vb$ satisfies \eqref{item:range-projection} in Corollary~\ref{cor:projection}, we have $X^* \subseteq [N_1^*]\times \cdots \times [N_{d-1}^*]$. Let $m^* := |X^*|$ and $\rho^* := m^*/(N_1^*\cdots N_{d-1}^*)$.
	
	The following claims allow us to apply the induction hypothesis to $(N_j^*)_{j=1}^{d-1}$, $X^*$, and $s^*$.
	\begin{claim}\label{claim:condition-N^*}
	$N_1^*\cdots N_{d-1}^* \leq M^{d-\delta/3}.$
	\end{claim}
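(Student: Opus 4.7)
My plan is to combine the upper bound $N_1^*\cdots N_{d-1}^* \leq \frac{2^{d^2+1}}{s}\,N_1\cdots N_d$ from \eqref{eqn:bound-product-N^*} with the lower bound $s \geq (N_1\cdots N_d)^{\frac{1}{d+1}-\frac{\delta}{4^d(d+1)}}$ which follows from \eqref{eqn:lower-bound-s} upon dropping the harmless factor $m^{\delta/(4^d(d+1))} \geq 1$. This immediately gives
\[
N_1^*\cdots N_{d-1}^* \;\leq\; 2^{d^2+1}(N_1\cdots N_d)^{\alpha}, \qquad \alpha := \frac{d}{d+1}+\frac{\delta}{4^d(d+1)}.
\]
Since $M \geq N_1$ and the hypothesis of Lemma~\ref{lem:counting-subsets-3} gives $N_1\cdots N_d \leq N_1^{d+1-\delta} \leq M^{d+1-\delta}$, I would then obtain $N_1^*\cdots N_{d-1}^* \leq 2^{d^2+1}\,M^{\alpha(d+1-\delta)}$.

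Expanding, $\alpha(d+1-\delta) = d - \frac{d\delta}{d+1} + \frac{\delta(d+1-\delta)}{4^d(d+1)} \leq d - \delta\left(\frac{d}{d+1} - \frac{1}{4^d}\right)$. For $d \geq 2$ the quantity $\frac{d}{d+1} - \frac{1}{4^d}$ is at least $\frac{2}{3} - \frac{1}{16} = \frac{29}{48}$, so the exponent of $M$ is at most $d - \frac{\delta}{3} - \frac{13\delta}{48}$. This leaves a slack of $\frac{13\delta}{48}$ in the exponent of $M$, which I will use to absorb the constant $2^{d^2+1}$.

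To execute this last step, I need a sufficient lower bound on $M$, equivalently on $N_1$. Since $m \geq 1$ and $N_1\cdots N_d \leq N_1^{d+1-\delta}$, we have $\rho \geq N_1^{-(d+1-\delta)}$. The standing assumption $\rho < \rho_0 := 2^{-(d+2)!\cdot 4^d d^3/\min(\beta,\delta)}$ then forces $\log_2 N_1 \geq \frac{(d+2)!\cdot 4^d d^3}{(d+1)\min(\beta,\delta)} \geq \frac{(d+2)!\cdot 4^d d^3}{(d+1)\delta}$, which for $d \geq 2$ comfortably dominates $\frac{48(d^2+1)}{13\delta}$. Hence $2^{d^2+1} \leq M^{13\delta/48}$, and combining this with the previous display yields $N_1^*\cdots N_{d-1}^* \leq M^{d-\delta/3}$ as desired.

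The main (and essentially only) obstacle is bookkeeping: making sure that the algebraic slack $\frac{13\delta}{48}$ really does dominate $2^{d^2+1}$ once we unpack what $\rho < \rho_0$ says about $N_1$. The generous factorial and exponential factors appearing in $\rho_0$ make this comfortable, and no new ideas beyond the inequalities already established in the proof of Lemma~\ref{lem:counting-subsets-3} are needed.
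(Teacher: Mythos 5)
Your proof is correct and follows essentially the same route as the paper: both combine the upper bound on $N_1^*\cdots N_{d-1}^*$ from \eqref{eqn:bound-product-N^*} (equivalently, $\lambda_\vb \leq 2/s$ applied to \eqref{eqn:bound-product-N^*-as-lambda}), the lower bound on $s$ from \eqref{eqn:lower-bound-s}, and the grid condition $N_1\cdots N_d \leq M^{d+1-\delta}$ to express the product in terms of $M$, and then both use the standing assumption $\rho < \rho_0$ to force $M$ large enough to absorb the constant $2^{d^2+1}$. The only differences are in bookkeeping constants (you keep the $1/4^d$ factor exactly and obtain a slack of $\frac{13\delta}{48}$ in the exponent, while the paper relaxes to a slack of $\frac{\delta}{12}$); both margins are comfortably covered by the lower bound on $M$.
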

	\begin{proof}[Proof of Claim~\ref{claim:condition-N^*}]

	Note that $N_1\leq M$, so we have that 
	\begin{equation}\label{eqn:upper-bound-N^*-product}N_1\cdots N_d\leq N_1^{d+1-\delta} \leq M^{d+1-\delta}.\end{equation}
	Since $m\geq 1$, we know that $M^{d+1} \geq N_1\cdots N_d \geq \frac{1}{\rho} > 2^\frac{(d+2)!\cdot 4^dd^3}{\min(\beta, \delta)}$ and so $M \geq 2^{\frac{16(d+2)d^3}{\delta}} > 2^{\frac{12(d^2+1)}{\delta}}$.
	Combining \eqref{eqn:upper-bound-N^*-product} with \eqref{eqn:lower-bound-s} and \eqref{eqn:bound-product-N^*}, we have
	\begin{equation}\label{eqn:upper-bound-N^*-product-by-N^*}
	    N_1^*\cdots N_{d-1}^* \leq 2^{d^2}\lambda_\vb N_1\cdots N_d \leq 2^{d^2+1} (N_1\cdots N_d)^{\frac{d}{d+1}+\frac{\delta}{4^d(d+1)}} \leq 2^{d^2+1}M^{\frac{d+1-\delta}{d+1}(d+\frac{\delta}{4^d})}.
	\end{equation}
	For the exponent of $M$ on the right hand side in \eqref{eqn:upper-bound-N^*-product-by-N^*}, we know that
	\[\frac{d+1-\delta}{d+1}\left(d+\frac{\delta}{4^d}\right) = d+\frac{\delta}{4^d} - \delta\frac{d+\frac{\delta}{4^d}}{d+1} < d + \frac{\delta}{4} - \frac{\delta}{2} = d - \frac{\delta}{3} - \frac{\delta}{12}.\]
	Therefore, we can simplify \eqref{eqn:upper-bound-N^*-product-by-N^*} and get
	\[N_1^*\cdots N_{d-1}^* \leq 2^{d^2+1}M^{d - \frac{\delta}{3} - \frac{\delta}{12}} \leq M^{d - \frac{\delta}{3}}\]
	as expected, where in the last inequality we use that $M \geq 2^{\frac{12(d^2+1)}{\delta}}$.
	\end{proof}

	By Claim~\ref{claim:condition-N^*}, we can apply the induction hypothesis to $X^* \subseteq [N_1^*]\times \cdots\times [N_{d-1}^*]$ and $\delta^* = \delta/3$. It remains to show that $s^*$ is also in the desired range.
	
	\begin{claim}\label{claim:condition-s^*}$(N_1^*\cdots N_{d-1}^*)^\frac{1}{d} (\rho^*)^{\frac{\delta/3}{4^{d-1}d}} \leq s^* \leq M(\rho^*)^{\beta}.$
	\end{claim}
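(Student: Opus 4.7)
The plan is to verify both inequalities by direct computation, using what has been established earlier in the proof: (a)~$\lambda_\vb\in(\epsilon/s,\,2/s]$ for $\vb\in B$; (b)~$|U^d(X,\vb,s)|>\epsilon m$ since $\vb\notin B_2$; (c)~Lemma~\ref{lem:preimage-of-projection}, giving $m^*\in[\lambda_\vb |U^d(X,\vb,s)|/2,\,|U^d(X,\vb,s)|/s]$; (d)~the two-sided bound \eqref{eqn:bound-product-N^*} on $N_1^*\cdots N_{d-1}^*$; (e)~the hypothesized range $(N_1\cdots N_d)^{1/(d+1)}\rho^{\delta/(4^d(d+1))}\leq s\leq N_1\rho^\beta$; and (f)~the assumption $\rho<\rho_0=2^{-(d+2)!\cdot 4^d d^3/\min(\beta,\delta)}$, which guarantees $\gamma\log(1/\rho)\geq d^3\log 2$.

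First I would combine (a)--(d) into the two-sided comparison
\[
\frac{\epsilon^2}{2^{d^2+2}}\rho \;\leq\; \rho^* \;\leq\; \frac{2\rho}{\epsilon},
\]
so that every positive power of $\rho^*$ equals the corresponding power of $\rho$ up to a bounded power of $\epsilon=\rho^\gamma$. For the upper bound $s^*\leq M(\rho^*)^\beta$, I would use $s^*\leq \lceil \epsilon s\rceil\leq 2\epsilon s$ and $s\leq N_1\rho^\beta\leq M\rho^\beta$ to obtain $s^*\leq 2\epsilon M\rho^\beta$, then substitute the left inequality above to reduce the claim to $2^{(d^2+2)\beta+1}\leq\epsilon^{2\beta-1}=\rho^{-\gamma(1-2\beta)}$; this holds because $\beta<1/2$ (in the intended applications $\beta=\delta/(4(d+1)^2)$ is tiny), the exponent $\gamma(1-2\beta)$ is positive and bounded below, and (f) provides enough room to dominate the $2^{O(d^2)}$ constant.

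For the lower bound $(N_1^*\cdots N_{d-1}^*)^{1/d}(\rho^*)^{\delta/(3\cdot 4^{d-1}d)}\leq s^*$, I would use $N_1^*\cdots N_{d-1}^*\leq 2^{d^2+1}(N_1\cdots N_d)/s$ from (d), $\rho^*\leq 2\rho/\epsilon$ from the right side above, $s^*\geq\epsilon s$, and the lower bound on $s$ from (e), which yields $s^{(d+1)/d}\geq(N_1\cdots N_d)^{1/d}\rho^{\delta/(4^d d)}$. After cancellation of the common factor $(N_1\cdots N_d)^{1/d}$, the desired inequality reduces to
\[
(\text{constant in } d) \;\leq\; \rho^{-\eta}, \qquad \eta\;=\;\frac{\delta(1-4\gamma)}{3\cdot 4^d d}-\gamma,
\]
and $\eta>0$ strictly since $\gamma=\min(\beta,\delta)/(4^d(d+2)!)$ was defined with $(d+2)!$ in the denominator precisely so that $\gamma$ is a factor of at least $3d/(d+2)!\leq 1/4$ smaller than $\delta/(3\cdot 4^d d)$ for all $d\geq 2$; (f) again absorbs the resulting $d$-dependent constant.

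The main obstacle is purely one of bookkeeping rather than conceptual: in each reduction one must carefully track the accumulated constants ($2^{d^2+2}$ from Corollary~\ref{cor:projection}, factors of $2$ from Lemma~\ref{lem:preimage-of-projection}, etc.) and verify that the small savings $\rho^\gamma$, $\rho^{\gamma(1-2\beta)}$, and $\rho^\eta$ dominate them. The threshold $\rho_0$ has been engineered so that $\gamma\log(1/\rho)\geq d^3\log 2$, which leaves exactly the slack needed for both inequalities.
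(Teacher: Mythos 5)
Your proposal follows the same route as the paper, but there is an arithmetic slip in the two-sided estimate on $\rho^*$ that turns into a genuine gap at the boundary of the parameter range.

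You claim $\rho^*\geq \epsilon^2\rho/2^{d^2+2}$. To get the lower bound on $m^*$ you (correctly) use $m^*\geq \tfrac{\lambda_\vb}{2}|U^d(X,\vb,s)|\geq \tfrac{\lambda_\vb}{2}\epsilon m$; but you appear to bound $N_1^*\cdots N_{d-1}^*$ from above by $2^{d^2+1}(N_1\cdots N_d)/s$ and then invoke $\lambda_\vb>\epsilon/s$, which leaves a spare factor of $\lambda_\vb$ in the numerator and produces $\epsilon^2$. The paper instead divides by the $\lambda_\vb$-dependent upper bound $N_1^*\cdots N_{d-1}^*\leq 2^{d^2}\lambda_\vb N_1\cdots N_d$ from \eqref{eqn:bound-product-N^*-as-lambda}, so $\lambda_\vb$ cancels and one obtains the tighter $\rho^*\geq 2^{-d^2-1}\epsilon\rho=2^{-d^2-1}\rho^{1+\gamma}$, which is \eqref{eqn:lower-bound-rho^*}.

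The loss of one power of $\epsilon$ matters precisely in your proof of $s^*\leq M(\rho^*)^\beta$. Your reduction requires $2^{(d^2+2)\beta+1}\leq \rho^{-\gamma(1-2\beta)}$, and you assert the exponent $\gamma(1-2\beta)$ is ``bounded below.'' It is not: as $\beta\to 1/2^{-}$ the exponent tends to $0$, and with $\gamma\log(1/\rho)$ only guaranteed $\geq d^3\log 2$ by assumption (f), the inequality $d^3(1-2\beta)\geq (d^2+2)\beta+1$ fails for $\beta$ close to $1/2$ (already for $d=2$ and $\beta>7/22$). The statement of the claim must hold for all $\beta\in(0,1/2)$, so the parenthetical remark that $\beta$ is ``tiny in the intended applications'' cannot be used to rescue the argument. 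With the paper's sharper lower bound on $\rho^*$ the analogous inequality becomes $\rho^{-\gamma(1-\beta)}\geq 2^{1+(d^2+1)\beta}$, whose exponent $\gamma(1-\beta)\geq\gamma/2$ is uniformly bounded below over $\beta\in(0,1/2)$, which is exactly what \eqref{eqn:claim-s^*-2} exploits. So the fix is local: replace the $\epsilon^2$ lower bound on $\rho^*$ by the $\epsilon^1$ version coming from cancellation of $\lambda_\vb$. Your treatment of the lower bound $(N_1^*\cdots N_{d-1}^*)^{1/d}(\rho^*)^{\delta/(3\cdot 4^{d-1}d)}\leq s^*$ is correct and matches the paper's computation.
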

	\begin{proof}[Proof of Claim~\ref{claim:condition-s^*}]
	By Lemma~\ref{lem:preimage-of-projection}, we have	\begin{equation}\label{eqn:bound-m^*} \frac{\lambda_\vb}{2}|U^d(X, \vb, s)| \leq m^* \leq \frac{1}{s}|U^d(X, \vb, s)|.\end{equation}
	Note that $\epsilon = \rho^{\gamma}$ and that $\rho = \frac{m}{N_1\cdots N_d}$. As $|U^d(X, \vb, s)| \leq m$, combining with \eqref{eqn:bound-product-N^*}, we have
	\begin{equation}\label{eqn:upper-bound-rho^*}\rho^* = \frac{m^*}{N_1^*\cdots N_{d-1}^*} \leq \frac{\frac{m}{s}}{\frac{\epsilon}{2s}N_1\cdots N_d} = \frac{2}{\epsilon}\frac{m}{N_1\cdots N_d} = 2\rho^{1-\gamma}.\end{equation}
	Recall that $\gamma = \frac{\min(\beta, \delta)}{4^d\cdot (d+2)!} \leq \frac{\delta}{12\cdot 4^dd}$, and so
	\[\frac{\delta}{3\cdot 4^{d-1}d}(1-\gamma) > \frac{\delta}{3\cdot 4^{d-1}d}-\gamma =  \frac{\delta}{4^dd}+ \frac{\delta}{3\cdot 4^{d}d} - \gamma \geq\frac{d+1}{d}\cdot \frac{\delta}{4^d(d+1)} + \gamma + \frac{\delta}{6\cdot 4^dd}.\]
	As a result, raising both sides of $(N_1\cdots N_d)^{\frac{1}{d+1}}\rho^{\frac{\delta}{4^d(d+1)}} \leq s$ to the $\frac{d+1}{d}$-th power, we have
	\[s\rho^\gamma \geq s^{-\frac{1}{d}} \cdot (N_1\cdots N_d)^{\frac{1}{d}} \rho^{\frac{d+1}{d}\frac{\delta}{4^d(d+1)}+\gamma} \geq 2^{-\frac{d^2+1}{d}-\frac{\delta}{3\cdot 4^{d-1}d}}\rho^{-\frac{\delta}{6\cdot 4^dd}}\left(\frac{2^{d^2+1}}{s}N_1\cdots N_d\right)^\frac{1}{d} (2\rho^{1-\gamma})^{\frac{\delta}{3\cdot 4^{d-1}d}}.\]
	Note that $s^* \geq \epsilon s = \rho^\gamma s$. By \eqref{eqn:upper-bound-rho^*}, we have $\rho^* \leq 2\rho^{1-\gamma}$. Combining these with \eqref{eqn:bound-product-N^*}, we have
	\begin{equation}\label{eqn:claim-s^*-1}s^* \geq s\rho^\gamma \geq 2^{-\frac{d^2+1}{d}-\frac{\delta}{3\cdot 4^{d-1}d}}\rho^{-\frac{\delta}{6\cdot 4^dd}}(N_1^*\cdots N_{d-1}^*)^{\frac{1}{d}}(\rho^*)^{\frac{\delta}{3\cdot 4^{d-1}d}} \geq (N_1^*\cdots N_{d-1}^*)^{\frac{1}{d}}(\rho^*)^{\frac{\delta}{3\cdot 4^{d-1}d}}.\end{equation}
	Here in the last inequality we use that $\rho^{-\frac{\delta}{6\cdot 4^dd}} \geq \rho^{-2\gamma} \geq 2^{2d^3} \geq 2^{\frac{d^2+1}{d} + \frac{\delta}{3\cdot 4^{d-1}d}}$.
		Note that from our choice of $\vb\in B$, $|U^d(X, \vb, s)|$ is at least $\epsilon m$. Therefore combining with \eqref{eqn:bound-m^*} we have $m^*\geq \frac{\epsilon}{2}\lambda_\vb m$. Combining this with \eqref{eqn:bound-product-N^*-as-lambda} and $\epsilon = \rho^{\gamma}$, we have
	\begin{equation}\label{eqn:lower-bound-rho^*}
	\rho^* = \frac{m^*}{N_1^*\cdots N_{d-1}^*} \geq \frac{\frac{\epsilon}{2}\lambda_\vb m}{2^{d^2}\lambda_\vb N_1\cdots N_d} \geq 2^{-d^2-1}\cdot \epsilon\rho = 2^{-d^2-1}\rho^{1+\gamma}.
	\end{equation}
	By \eqref{eqn:lower-bound-s}, we know that $\epsilon s \geq \rho^\gamma \cdot (m/\rho)^{\frac{1}{d+1}-\frac{\delta}{4^d(d+1)}} \geq \rho^{-\frac{1}{d+1}+\frac{\delta}{4^d(d+1)} + \gamma}$. As $\frac{\delta}{4^d(d+1)}+\gamma < \frac{1}{d+1}$ and $\rho < 1$, we have $\epsilon s > 1$, so $s^* = \lceil\epsilon s\rceil\leq 2\epsilon s$. Note that $s \leq N_1\rho^\beta \leq M\rho^\beta$. Therefore, we have
	\begin{equation}\label{eqn:claim-s^*-2}
	    s^* \leq 2\epsilon s \leq 2\rho^\gamma M\rho^{\beta} = 2^{1+\beta(d^2+1)}\rho^{\gamma - \beta\gamma} M (2^{-d^2-1}\rho^{1+\gamma})^\beta \leq M (\rho^{*})^\beta,
	\end{equation}
	where in the last inequality we use \eqref{eqn:lower-bound-rho^*} and that $\rho^{-\gamma + \beta\gamma} \geq \rho^{\gamma/2} > 2^{d^3/2} > 2^{1+\beta(d^2+1)}$ for $d\geq 2$.
	\end{proof}	
	
	We conclude that the conditions for the induction hypothesis are satisfied by $d^* = d-1$, $(N_i^*)_{i=1}^{d-1}$, $\delta^* = \delta/3$, $\beta^* = \beta$, $X^* = f_\vb(U^d(X, \vb, s))$, and $s^* = \lceil \epsilon s\rceil$. Applying the induction hypothesis we get
	\begin{equation}\label{eqn:apply-induction-1}
	    \sum_{\vb^*\in \Z^{d-1}\setminus \{\vzero\}}|U^{d-1}(f_\vb(U^d(X, \vb, s)), \vb^*, s^*)| \leq C^*\frac{N_1^*\cdots N_{d-1}^*m^*}{(s^*)^{d-1}}(\rho^*)^{\frac{\min(\beta, \delta/3)}{4^{d-1}(d+1)!}},
	\end{equation}
	where $C^* = C_0\cdot 5^{d-1}\cdot 2^{(d-1)^3}$. Using \eqref{eqn:bound-product-N^*}, \eqref{eqn:bound-m^*}, \eqref{eqn:upper-bound-rho^*}, and that $s^* \geq \rho^\gamma s$, we have
	\begin{equation}\label{eqn:apply-induction-2}C^*\frac{N_1^*\cdots N_{d-1}^*m^*}{(s^*)^{d-1}}(\rho^*)^{\gamma^*} \leq C^* \frac{2^{d^2+1}N_1\cdots N_d}{\rho^{(d-1)\gamma}s^{d+1}}\rho^{(1-\gamma)\gamma^*}|U^d(X, \vb, s)|\end{equation}
	where for simplicity we denote $\gamma^* := \frac{\min(\beta, \delta/3)}{4^{d-1}\cdot (d+1)!} \geq \frac{\min(\beta, \delta)}{3\cdot 4^{d-1}\cdot (d+1)!} = \frac{4(d+2)}{3}\gamma$. Note that exponent of $\rho$ on the right hand side of \eqref{eqn:apply-induction-2} satisfies $(1-\gamma)\gamma^* - (d-1)\gamma = \gamma^* - (d-1+\gamma^*)\gamma > 3\gamma$. Combining this with the inequalities \eqref{eqn:apply-induction-1} and \eqref{eqn:apply-induction-2}, we have
	\begin{equation}
	\sum_{\vb^*\in \Z^{d-1}\setminus \{\vzero\}}|U^{d-1}(f_\vb(U^d(X, \vb, s)), \vb^*, s^*)| \leq 2^{d^2+1}C^* \rho^{3\gamma}\frac{N_1\cdots N_d}{s^{d+1}}|U^d(X, \vb, s)|.
	\end{equation}
Put this into \eqref{eqn:single-sum}. Note that $\lambda_0 = \frac{\epsilon}{s} = \rho^\gamma / s$ and that $\frac{s^*-1}{s}\leq \frac{\epsilon s}{s} = \rho^\gamma$. We have
	\begin{equation}\label{eqn:single-sum-simplified}\begin{split}
		\sum_{\vb'\in B}|U^d(X, \vb, s)\cap U^d(X, \vb', s)| & \leq  4\rho^{-\gamma}|U^d(X, \vb, s)| + \rho^\gamma\sum_{\vb'\in B}|U^d(X, \vb', s)| \\
		&  + 12\cdot 2^{d^2+1}C^* \rho^{\gamma}\frac{N_1\cdots N_d}{s^{d}}|U^d(X, \vb, s)|.
	\end{split}
	\end{equation}
 We sum \eqref{eqn:single-sum-simplified} over $\vb\in B$. 
 Recall that $\gamma = \frac{\min(\beta, \delta)}{4^d\cdot (d+2)!} < \frac{\beta}{4}$. We have
	\[\frac{N_1\cdots N_d}{s^d} \geq \frac{N_1^d}{s^d} \geq \rho^{-d\beta} > \rho^{-2\gamma},\]
	and it follows that $\rho^{-\gamma} < \rho^\gamma\frac{N_1\cdots N_d}{s^d}$. Using this and \eqref{eqn:bound-B-size}, we have
	\[\begin{split}\sum_{\vb, \vb'\in B}|U^d(X, \vb, s)\cap U^d(X, \vb', s)| & \leq \left(4\rho^{-\gamma} + \rho^\gamma |B| + 12\cdot 2^{d^2+1}C^*\rho^\gamma \frac{N_1\cdots N_d}{s^d}\right)\sum_{\vb\in B}|U^d(X, \vb, s)|\\
	& \leq \left(4 + 5^d + 12\cdot 2^{d^2+1}C^*\right)\rho^\gamma \frac{N_1\cdots N_d}{s^d}\sum_{\vb\in B}|U^d(X, \vb, s)|.\end{split}\]
	Combine this with \eqref{eqn:partition-sum-term-3-intersection}, we have
	\begin{equation}\label{eqn:partition-sum-term-3}
	\sum_{\vb\in B}|U^d(X, \vb, s)| \leq \left(4 + 5^d + 12\cdot 2^{d^2+1}C^*\right)\rho^\gamma \frac{N_1\cdots N_d}{s^d}m.
	\end{equation}
Substituting in the bounds \eqref{eqn:partition-sum-term-1}, \eqref{eqn:partition-sum-term-2}, and \eqref{eqn:partition-sum-term-3} into \eqref{eqn:partition-sum}, we have
	\[\sum_{\vb\in \Z^d\setminus \{\vzero\}} |U^d(X, \vb, s)| \leq \left(12^d + 5^d + 4 + 5^d + 12\cdot 2^{d^2+1}C^*\right)\rho^\gamma \frac{mN_1\cdots N_d}{s^d}.\]
	Finally, we show that the sum of the additive constant above is at most $C$.
	Recall that $C^* = C_0\cdot 5^{d-1}\cdot 2^{(d-1)^3}$ and $C = C_0\cdot 5^d\cdot 2^{d^3}$, we know that $24\cdot 2^{d^2}C^* \leq \frac{3}{5}\cdot 5\cdot 2^{3d^2-3d+1}C^* = \frac{3C}{5}$  as $d\geq 2$. For other terms, we have $4\leq 5^d \leq 12^d \leq 2^{d^3} = \frac{C}{5^dC_0} \leq \frac{C}{25}$ and it follows that $12^d + 5^d + 4 + 5^d + 12\cdot 2^{d^2+1}C^* \leq \frac{4}{25} C + \frac{3}{5}C < C$. Thus the statement holds for $d$. Therefore we conclude that the statement holds for all positive integer $d$.\end{proof}

	\section{A proof of the lower bound in Theorem~\ref{thm:rectangular}}\label{sec: rec-lower}
    We prove the following result, which is the lower bound on the discrepancy in Theorem~\ref{thm:rectangular}.
	\begin{theorem}\label{thm:rectangular-lower}
	    For any positive integer $d$, there exists a constant $c_d>0$ such that the following holds. For positive integer $N_1,\ldots,N_d$, letting $\vN = (N_1, \dots, N_d)$, we have
	    \[\disc(\mathcal{A}_{\vN}) \geq c_d\max_{I\subseteq [d]}\left(\prod_{i\in I}N_i\right)^\frac{1}{2|I|+2}.\]
	    Here by convention if $I = \emptyset$ then $\prod_{i\in I}N_i = 1$.
	\end{theorem}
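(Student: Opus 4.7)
The plan is to induct on $d$, with base case $d = 1$ given by Roth's classical lower bound $\disc(\mathcal{A}_{(N_1)}) \gtrsim N_1^{1/4}$. Assume without loss of generality that $N_1 \geq N_2 \geq \cdots \geq N_d$, so that (as in the remark following Theorem~\ref{thm:rectangular}) the right-hand side becomes $\max_{1 \leq k \leq d}(\prod_{i=1}^k N_i)^{1/(2k+2)}$.

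The slicing step handles every $k < d$. For each such $k$ and each choice $(c_{k+1}, \dots, c_d) \in \prod_{i>k}[N_i]$, the slice $\{\vx \in \Omega : x_i = c_i \text{ for all } i > k\}$ is in canonical bijection with $[N_1] \times \cdots \times [N_k]$, and every arithmetic progression contained in the slice is a $d$-dimensional AP in $\Omega$ (its common difference has zeros outside the first $k$ coordinates). Hence for any coloring $\chi$ of $\Omega$, restricting to such a slice yields a $k$-dimensional coloring whose discrepancy is at least $\disc(\mathcal{A}_{(N_1, \dots, N_k)}) \geq c_k(\prod_{i=1}^k N_i)^{1/(2k+2)}$ by the induction hypothesis, settling every $k < d$ in the maximum.

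It remains to handle $k = d$. Write $V = N_1 \cdots N_d$ and $N_* = N_d$. If $V > N_*^{d+1}$ then $N_1\cdots N_{d-1} > N_d^d$, and comparing exponents gives $(N_1\cdots N_{d-1})^{1/(2d)} > V^{1/(2d+2)}$, so the $k = d-1$ slicing bound already suffices. Otherwise $V \leq N_*^{d+1}$, and the length $L := \lfloor c V^{1/(d+1)} \rfloor$ satisfies $L \leq cN_i$ for every $i$. Fix any coloring $\chi : \Omega \to \{\pm 1\}$ with discrepancy $D$, extend it by zero to $\Z^d$, and introduce the anisotropic box
\[
\mathcal{B} = \{\vb \in \Z^d \setminus \{\vzero\} : |b_i| \leq cN_i/L \text{ for all } i\}, \qquad |\mathcal{B}| \asymp V/L^d,
\]
and the energy
\[
E = \sum_{\vb \in \mathcal{B}} \sum_{\va} |\chi(\AP_d(\va, \vb, L))|^2,
\]
where $\va$ ranges over starting points for which $\AP_d(\va, \vb, L) \subseteq \Omega$. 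Since each AP contributes at most $D^2$ and there are $O(|\mathcal{B}| V)$ APs, one has the trivial upper bound $E \lesssim D^2 |\mathcal{B}| V$. Expanding the square and applying Parseval on $\Z^d$ expresses $E$ as an integral of $|\hat{\chi}(\boldsymbol{\alpha})|^2$ against the kernel
\[
K_{L, \mathcal{B}}(\boldsymbol{\alpha}) = \sum_{\vb \in \mathcal{B}}\left|\sum_{j=0}^{L-1} e^{2\pi i j \langle \boldsymbol{\alpha}, \vb \rangle}\right|^2,
\]
whose total integral is $L|\mathcal{B}|$. A Fourier-analytic estimate adapted from Roth and Valk\'o will give the matching lower bound $E \gtrsim L V |\mathcal{B}|$; combining with the trivial upper bound yields $D^2 \gtrsim L \asymp V^{1/(d+1)}$, hence $D \gtrsim V^{1/(2d+2)}$, completing the induction.

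The main obstacle is the Fourier lower bound $E \gtrsim L V |\mathcal{B}|$ in the anisotropic regime: one must rule out concentration of $|\hat{\chi}|^2$ in regions where $K_{L, \mathcal{B}}$ is small. The approach is to perform the diagonal rescaling $\alpha_i \mapsto \alpha_i/N_i$, which turns $\mathcal{B}$ into a unit-scale box and $K_{L,\mathcal{B}}$ into (essentially) an isotropic $d$-dimensional Fej\'er-type kernel, reducing the problem to the standard $L^2$ mass-distribution arguments of Roth and Valk\'o. Concentration of $|\hat{\chi}|^2$ near $\boldsymbol{\alpha} = \vzero$ is controlled because $\hat\chi(\vzero) = \sum \chi$ is itself bounded via decomposing $\Omega$ into axis-aligned rows (each an AP of discrepancy $\leq D$), and concentration at other rational points is handled by the usual Fej\'er-type bounds. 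The fact that the anisotropy $|b_i| \leq N_i/L$ precisely matches the anisotropy of $\Omega$ (indeed, APs fit in $\Omega$ iff $|b_i|L \lesssim N_i$) is what makes the rescaling work cleanly.
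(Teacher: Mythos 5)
Your high-level plan is the same Fourier/energy argument the paper uses: extend $\chi$ by zero, take APs $\{\vx-t\vb : 0\le t<L\}$ with anisotropic $\vb$, bound the $L^2$-mass of $g_\vb*\chi$ above by $T^2\cdot(\text{support size})$ via the discrepancy bound, pass to Fourier side by Parseval, and get a matching lower bound. Your reduction to the cube-like case $V\le N_d^{d+1}$ via slicing/induction is also essentially what the paper does — the paper achieves the same effect in one shot by setting $D_j=0$ for $j\notin I$ when proving the bound for a fixed $I$, which is slicing by another name. So the route is not genuinely different.

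The gap is in the crucial Fourier lower bound $E\gtrsim LV|\mathcal{B}|$, which you defer to ``a Fourier-analytic estimate adapted from Roth and Valk\'o'' and then describe in a way that does not actually work. You propose rescaling and then ``ruling out concentration of $|\hat\chi|^2$ in regions where $K_{L,\mathcal{B}}$ is small,'' singling out $\boldsymbol\alpha=\vzero$ as a danger zone and proposing to bound $\hat\chi(\vzero)$ separately. But $\boldsymbol\alpha=\vzero$ is where $K_{L,\mathcal B}$ is \emph{largest} (each Fej\'er factor equals $L^2$ there), so controlling $\hat\chi(\vzero)$ is not the issue; and no separate handling of concentration is needed at all. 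The actual mechanism — the heart of Roth's argument, and what the paper supplies in its proof — is a pigeonhole/Dirichlet estimate showing $\sum_{\vb\in B}\bigl|\widehat{g_\vb}(\vr)\bigr|^2\ge \frac{4}{\pi^2}L^2$ \emph{pointwise for every} $\vr\in[0,1]^d$: among $|A|$ points $\va\cdot\vr$ with $\va$ in a half-box, two have fractional parts within $1/|A|$, giving some $\vb'\in B$ with $\{\vb'\cdot\vr\}\le 1/|A|$, and then $|\widehat{g_{\vb'}}(\vr)|^2\ge\frac{4}{\pi^2}L^2$ provided $L\le|A|/2$. Once you have this pointwise kernel bound, integrating against $|\hat\chi|^2$ and invoking Parseval's identity $\int|\hat\chi|^2=N_1\cdots N_d$ gives the result with no concentration analysis. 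Without this step your proposal is only a blueprint, not a proof; supplying it makes your argument essentially identical to the paper's.

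Two smaller inconsistencies worth fixing: you first define $E$ as a sum over starting points $\va$ for which the AP lies entirely in $\Omega$, but then ``apply Parseval on $\Z^d$'' — Parseval gives you the sum over all of $\Z^d$, which corresponds to APs that may stick out of $\Omega$ (their intersection with $\Omega$ is still an AP, so the discrepancy bound still applies, but the two definitions of $E$ are different objects). Also, the rescaling $\alpha_i\mapsto\alpha_i/N_i$ maps $[0,1]^d$ to an anisotropic box and $\Z^d$ to a non-integral lattice; it does not literally ``reduce to the isotropic case,'' and in fact the pigeonhole argument works directly in the anisotropic box $B$ without any rescaling.
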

	
	Roth \cite{Roth} proved the case $d=1$, and Valk\'o \cite{Valko} proved the case that the $N_i$'s are equal. Similar to these previous results, our proof uses Fourier analysis. We first set up some notations. Let $f, g: \Z^d \to \C$ be two functions that each has finite support. The Fourier transform $\widehat{f}: [0, 1]^d\to \C$ is given by $\widehat{f}(\vr) = \sum_{\vx\in \Z^d}f(\vx)e^{-2\pi i\vx\cdot \vr}$. The convolution $f*g: \Z^d\to \C$ is given by $f*g(\vx) = \sum_{\vr\in \Z^d}f(\vr)g(\vx-\vr)$, which also has finite support.	With these notations, we have the convolution identity $\widehat{f*g} = \widehat{f}\cdot \widehat{g}$ and Parseval's identity
	\[\sum_{\vx\in \Z^d}f(\vx)\overline{g(\vx)} = \int_{[0, 1]^d} \widehat{f}(\vr) \overline{\widehat{g}(\vr)}\,d\vr.\]
	
	In the proof of Theorem~\ref{thm:rectangular-lower} below, for a vector $\vx \in \mathbb{Z}^d$, we let $\vx_i$ denote the $i^{\textrm{th}}$ coordinate of $\vx$. 
	
	\begin{proof}[Proof of Theorem~\ref{thm:rectangular-lower}]
	We take $c_d = \frac{6^{-d/2}}{2}$ Let $\Omega = [N_1]\times \cdots \times [N_d]\subseteq \Z^d$. Fix any $\chi: \Omega \to \{1, -1\}$. Let $T = \max_{A\in \mathcal{A}_\vN}|\chi(A)|$. It suffices to show that $T \geq c_d\max_{I\subseteq [d]}\left(\prod_{i\in S}N_i\right)^\frac{1}{2|I|+2}$.
	
	For $\chi: \Omega\to \{1, -1\}$, we may extend it to a function $\chi: \Z^d \to \{-1, 0, 1\}$ by assigning $0$ to $\Z^d\setminus \Omega$. Clearly $\chi$ takes nonzero values on $N_1\cdots N_d$ points. Hence we may apply Parseval's identity and get
	\begin{equation}\label{eqn:chi-norm}
	    \int_{[0, 1]^d} \widehat{\chi}(\vr) \overline{\widehat{\chi}(\vr)}\,d\vr = \sum_{\vx\in \Z^d}\chi(\vx)\overline{\chi(\vx)} = N_1\cdots N_d.
	\end{equation}
	
	Let $L$ be a positive integer and $D_1, \dots D_d$ be nonnegative integers to be determined later. For each $\vb\in \Z^d\setminus \vzero$ satisfying that $\vb_i\in [-D_i, D_i]$ for $1\leq i\leq d$, let $g_\vb: \Z^d\to \C$ be the indicator function of the set $\{0, \vb, \dots, (L-1)\vb\}$. Now we have for each $\vx\in \Z^d$, 
	\[g_\vb*\chi(\vx) = \sum_{t=0}^{L-1}\chi(\vx - t\vb) = \chi(\Omega\cap \{\vx - t\vb: 0\leq t < L\}).\]
	Since $\Omega\cap \{\vx - t\vb: 0\leq t < L\}$ is an arithmetic progression contained in $\Omega$, it is a set in $\mathcal{A}_\vN$, so $|g_\vb*\chi(\vx)| \leq T$. This is true for all $\vx\in \Z^d$. Also note that $|g_\vb*\chi(\vx)|$ is nonzero only when $\vx - t\vb\in \Omega$ for some $0\leq t < L$. In this case we have $\vx_i \in [1-LD_i, N_i+LD_i]$ for each $1\leq i\leq d$. Therefore, $g_\vb*\chi$ is nonzero on at most $\prod_{i=1}^d(N_i+2LD_i)$ points in $\Z^d$. We have
	\begin{equation}\label{eqn:upper-L2}
	\sum_{\vx\in \Z^d}{g_\vb * \chi}(\vx) \overline{{g_\vb * \chi}(\vx)} = \sum_{\vx\in \Z^d}\left|{g_\vb * \chi}(\vx)\right|^2 \leq T^2\prod_{i=1}^d(N_i+2LD_i).\end{equation}
	By the convolution identity and Parseval's identity, we have
	\begin{equation}\label{eqn:upper-Parseval}
	    \sum_{\vx\in \Z^d}{g_\vb * \chi}(\vx) \overline{{g_\vb * \chi}(\vx)} = \int_{[0, 1]^d}\widehat{g_\vb * \chi}(\vr) \overline{\widehat{g_\vb * \chi}(\vr)}\,d\vr = \int_{[0, 1]^d}\widehat{g_\vb}(\vr)\widehat{\chi}(\vr) \overline{\widehat{g_\vb}(\vr)\widehat{\chi}(\vr)}\,d\vr
	\end{equation}
	Combining \eqref{eqn:upper-L2} and \eqref{eqn:upper-Parseval}, we get that for any nonzero $\vb\in [-D_1, D_1] \times \dots \times [-D_d, D_d]$
	\begin{equation}\label{eqn:single-b}
	    \int_{[0, 1]^d}\left|\widehat{g_\vb}(\vr)\right|^2\widehat{\chi}(\vr) \overline{\widehat{\chi}(\vr)}\,d\vr \leq T^2 \prod_{i=1}^d(N_i + 2LD_i).
	\end{equation}
	Let $A$ be the set of integer points in $[0, D_1] \times \cdots \times [0, D_d]$ and $B$ be the set of nonzero integer points in $[-D_1, D_1] \times \cdots \times [-D_d, D_d]$. Clearly any two distinct points in $A$ have their difference in $B$. The number of points in $B$ is at most $\prod_{i=1}^d(2D_i+1)$. Hence if we sum over $\vb\in B$ in \eqref{eqn:single-b}, we get
	\begin{equation}\label{eqn:sum-b}
	    \int_{[0, 1]^d}\left(\sum_{\vb\in B}\left|\widehat{g_\vb}(\vr)\right|^2\right)\widehat{\chi}(\vr) \overline{\widehat{\chi}(\vr)}\,d\vr \leq T^2 \prod_{i=1}^d(N_i + 2LD_i)(2D_i+1)
	\end{equation}
	Fix any $\vr\in [0, 1]^d$. By the pigeonhole principle, there exists two distinct $\va, \va'\in A$ such that the fractional parts of $\va\cdot \vr$ and $\va'\cdot \vr$ differ by at most $1/|A|$. Hence for any $\vr$ we can find $\vb'\in B$ (we shall take $\vb' = \va-\va'$ or $\vb' = \va'-\va$) such that the fractional part of $\vb'\cdot \vr$ is in $[0, 1/|A|]$. If $L \leq \frac{|A|}{2} = \frac{1}{2}\prod_{i=1}^d (D_i+1)$, then for any $\vr\in [0, 1]^d$,
	\[\sum_{\vb\in B}\left|\widehat{g_\vb}(\vr)\right|^2 \geq \left|\widehat{g_{\vb'}}(\vr)\right|^2 = \left|\sum_{t=0}^{L-1}e^{-t2\pi i\vb'\cdot \vr}\right|^2 \geq \frac{4}{\pi^2}L^2.\]
	Put this into \eqref{eqn:sum-b} and combine with \eqref{eqn:chi-norm}. We conclude that for any positive integer $L$ and nonnegative integers $D_1, \dots, D_d$ such that $L\leq \frac{(D_1+1)\cdots (D_d+1)}{2}$, then
	\begin{equation}\label{eqn:Fourier-lower-unsimplified}
	    T^2\prod_{i=1}^d(N_i+2LD_i)(2D_i+1) \geq \frac{4}{\pi^2}L^2\prod_{i=1}^dN_i.
	\end{equation}
	
	Let $R = \max_{I\subseteq [d]}\left(\prod_{i\in I}N_i\right)^\frac{1}{2|I|+2}$. If $R \leq 2$, the statement is trivial as $c_d = \frac{6^{-d/2}}{2} \leq 1/2$. Therefore we may assume that $R > 2$ and the maximum in the definition of $R$ is achieved by some nonempty $I\subseteq [d]$. For each $j\in I$, we have $R \geq \left(\prod_{i\in I\setminus \{j\}}N_i\right)^\frac{1}{2|I|}$, so $N_j \geq R^2$. With these properties, we may now choose the values of $L$ and $D_1, \dots, D_d$. We set $L = \lfloor R^2/2\rfloor$, $D_{i} = \lfloor \frac{N_{i}}{R^2}\rfloor$ for $i\in I$, and $D_j = 0$ for each $j\notin I$. 
Since
	\[\frac{(D_1+1)\cdots (D_d+1)}{2} \geq  \frac{\prod_{i\in I}N_i}{2R^{2|I|}} = \frac{R^2}{2} \geq L,\]
	we can apply \eqref{eqn:Fourier-lower-unsimplified} to these variables. For $j\notin I$, as $D_j = 0$, we have $(N_j+2LD_j)(2D_j+1) = N_j$. For $i\in I$, since $N_i \geq R^2$, we have $N_i/R^2 \geq D_i \geq 1$, so
	\[(N_i+2LD_i)(2D_i+1) \leq \left(N_i + 2\cdot \frac{R^2}{2} \cdot \frac{N_i}{R^2}\right)\cdot 3\frac{N_i}{R^2} = 6\frac{N_i^2}{R^2}.\]
	Put these into \eqref{eqn:Fourier-lower-unsimplified}. Note that $L \geq \frac{R^2}{2}$. We have
	\[T^2 \prod_{i\in I}6\frac{N_i^2}{R^2} \cdot \prod_{j\notin I}N_j \geq \frac{R^4}{4}\prod_{i=1}^dN_i.\]
	Also note that $\prod_{i\in I}N_i = R^{2|I|+2}$. We conclude that $T \geq \frac{6^{-\frac{|I|}{2}}}{2}R \geq c_dR$.
	\end{proof}
	
	\section{A proof of the upper bound in Theorem~\ref{thm:rectangular}}\label{sec: rec-upper}

In this section, we aim to generalize the upper bound in Theorem~\ref{thm:almost-cubes} to all grids of differing side lengths. The following lemma allows us to remove dimensions of short side lengths.
\begin{lemma}\label{lem:slice}
Let $d\geq 2$ be a positive integer and $N_1, \dots, N_d$ be positive integers. Then for $\vN = (N_1, \dots, N_d)$ and $\vN' = (N_1, \dots, N_{d-1})$, we have
\[\disc(\mathcal A_{\vN}) \leq \max\left(\disc(\mathcal A_{\vN'}), \sqrt{6N_d\log(2N_1\cdots N_d)}\right).\]
\end{lemma}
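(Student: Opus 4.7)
The plan is to build a coloring of $\Omega = [N_1]\times\cdots\times[N_d]$ by ``stacking'' a deterministic coloring on each of the $N_d$ horizontal slices, but multiplying each slice by an independent random sign in order to handle arithmetic progressions that cross many slices. Concretely, fix a coloring $\chi_0 : [N_1]\times\cdots\times[N_{d-1}] \to \{-1,+1\}$ achieving discrepancy $\disc(\mathcal A_{\vN'})$, let $\epsilon_1,\ldots,\epsilon_{N_d}$ be independent uniform $\pm 1$ random variables, and set
\[
\chi(\vx, k) := \epsilon_k\,\chi_0(\vx),\qquad \vx\in[N_1]\times\cdots\times[N_{d-1}],\ k\in[N_d].
\]
I would then split the analysis of $|\chi(A)|$ for $A \in \mathcal A_\vN$ into two cases according to the last coordinate $b_d$ of the common difference $\vb$ of $A$.

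When $b_d = 0$, the progression $A$ lies entirely inside a single slice $[N_1]\times\cdots\times[N_{d-1}]\times\{k\}$, so $\chi(A) = \epsilon_k\,\chi_0(A')$ where $A'$ is the projected arithmetic progression in $\mathcal A_{\vN'}$. Hence deterministically $|\chi(A)| \le \disc(\mathcal A_{\vN'})$.

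When $b_d \neq 0$, consecutive terms of $A$ land in distinct slices (their $d$-th coordinates differ by $b_d$), so each slice contributes at most one term, and the length $L$ of $A$ is at most $N_d/|b_d| \le N_d$. Thus $\chi(A) = \sum_{k=1}^{N_d}\epsilon_k c_k$ where each $c_k \in \{-1,0,1\}$ and at most $N_d$ of the $c_k$ are nonzero. Hoeffding's inequality gives $\Pr[\,|\chi(A)|>t\,] \le 2\exp(-t^2/(2N_d))$. Any $A\in\mathcal A_\vN$ is determined by its endpoints (or by its single element if $L=1$), so $|\mathcal A_\vN| \le (N_1\cdots N_d)^2$. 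Setting $t = \sqrt{6N_d\log(2N_1\cdots N_d)}$, the union bound yields
\[
\Pr\!\left[\exists A\in\mathcal A_\vN \text{ with } b_d\neq 0,\ |\chi(A)|>t\right] \le 2(N_1\cdots N_d)^2 \cdot (2N_1\cdots N_d)^{-3} < 1.
\]
Therefore some realization of $(\epsilon_k)$ makes $|\chi(A)| \le t$ for every such $A$, which combined with the deterministic bound in the first case gives the stated lemma.

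The proof is essentially a standard probabilistic argument; the only real care needed is in the case $b_d\neq 0$, where one must observe that the slice structure forces at most one progression term per slice so that the random signs are genuinely independent in the expansion of $\chi(A)$, and then verify that the chosen threshold $\sqrt{6N_d\log(2N_1\cdots N_d)}$ is large enough for the union bound over all at most $(N_1\cdots N_d)^2$ progressions.
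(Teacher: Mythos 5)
Your proposal is essentially identical to the paper's proof: choose an optimal coloring of the $(d-1)$-dimensional grid, multiply each slice by an independent Rademacher sign, split by whether the last coordinate $b_d$ of the common difference vanishes, and in the nonzero case combine a Chernoff/Hoeffding tail bound with a union bound over progressions. The one inaccuracy is the claim that an arithmetic progression ``is determined by its endpoints,'' hence $|\mathcal A_\vN| \le (N_1\cdots N_d)^2$: this is false, since for example $\{(1,1),(3,3)\}$ and $\{(1,1),(2,2),(3,3)\}$ share endpoints but are distinct progressions; one must also record the length, which is what the paper does, giving the bound $(N_1\cdots N_d)^2\cdot N_d \le (N_1\cdots N_d)^3$ on the number of progressions with $b_d \neq 0$. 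Fortunately the threshold $t = \sqrt{6N_d\log(2N_1\cdots N_d)}$ is chosen with enough slack that the union bound still closes with the corrected count: $2(N_1\cdots N_d)^2 N_d\cdot(2N_1\cdots N_d)^{-3} \le \tfrac14 < 1$. So the argument is sound once that counting step is repaired.
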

\begin{proof}
Firstly we may choose an optimal coloring $\chi'$ for the grid $[N_1]\times \cdots \times [N_{d-1}]$ that achieves discrepancy $\disc(\mathcal A_{\vN'})$.

We extend this coloring to a coloring $\chi: [N_1]\times \cdots \times [N_d] \to \{1, -1\}$ by the following procedure. Take $N_d$ i.i.d. Rademacher random variables $v(i)$ for $1\leq i\leq N_d$ (i.e. $\Pr(v(i) = 1) = \Pr(v(i) = -1) = \frac{1}{2}$). Now we set $\chi(x_1, \dots, x_d) = \chi'(x_1, \dots, x_{d-1})v(x_d)$ for any $(x_1, \dots, x_d)\in [N_1]\times \cdots \times [N_d]$. 

Now we analyze $\chi(S)$ for $S\in \mathcal A_\vN$. Let $(k_1, \dots, k_d)$ be the common difference of arithmetic progression $S$. If $k_d = 0$, then all elements in $S$ share the same $d$-th coordinate $x_d$, so we may write $S$ as $S'\times \{x_d\}$, where $S'$ is also an arithmetic progression with common difference $(k_1, \dots, k_{d-1})$. By our construction of $\chi$, we have $|\chi(S)| = |\chi'(S')v(x_d)| \leq \disc(\mathcal A_{\vN'})$.

Otherwise if $k_d\ne 0$, then all elements in $S$ have distinct $d$-th coordinates, and $|S|\leq N_d$. Since $\chi'$ is deterministic, we know that $\chi(S)$ is a summation of $|S|$ i.i.d. Rademacher random variables. Now by the Chernoff bound (e.g. see Theorem A.1.1 and Corollary A.1.2 in \cite{AS}), we have
\[\Pr(|\chi(S)| > \sqrt{6N_d\log(2N_1\cdots N_d)}) \leq 2e^{-\frac{6N_d\log(2N_1\cdots N_d)}{2|S|}} \leq \frac{1}{4}(N_1\cdots N_d)^{-3} < (N_1\cdots N_d)^{-3},\]
where in the last inequality we use that $|S| \leq N_d$. Finally we apply the union bound on all $S$ with $k_d\ne 0$. Clearly there are $N_1\cdots N_d$ ways to pick the first element in the arithmetic progression, and at most $N_1\cdots N_d$ ways to pick the last element, and at most $N_d$ ways to choose $|S|$ (as $1\leq |S| \leq N_d$). Once these are chosen, then clearly $S$ is determined as the common difference in the last coordinate is determined. Hence the total number of distinct $S$ in $\mathcal A_\vN$ with $k_d\ne 0$ is at most $(N_1\cdots N_d)^3$. By union bound, we conclude that there exists a choice of $v$ such that $|\chi(S)|\leq \sqrt{6N_d\log(2N_1\cdots N_d)}$ for all $S\in \mathcal{A}_{\vN}$ with distinct $d$-th coordinates.

In summary, we conclude that there is a choice of $\chi:[N_1]\times \cdots \times[N_d] \to \{1,-1\}$ so that
\[\max_{S\in \mathcal{A}_\vN}|\chi(S)| \leq \max\left(\disc(\mathcal A_{\vN'}), \sqrt{6N_d\log(2N_1\cdots N_d)}\right).\]
Note that $\disc(\mathcal{A}_\vN)$ is defined as the minimum over all $\chi$, so we have the desired inequality.
\end{proof}

\begin{proof}[Proof of the upper bound in Theorem~\ref{thm:rectangular}]
    Suppose that $N_1 \geq N_2 \geq \cdots \geq N_d \geq 1 =: N_{d+1}$. Assume that $N_1$ is sufficiently large to avoid triviality.
    
    Let $R_i = \left(\prod_{j=1}^i N_j\right)^{\frac{1}{i+1}}$ for $1\leq i\leq d$. Clearly 
    \[\max_{I\subseteq [d]}\left(\prod_{i\in I}N_i\right)^\frac{1}{|I|+1} = \max_{1\leq i\leq d} R_i.\]
    Now we take $t$ to be the first index $1\leq i\leq d$ such that $R_i > \frac{N_{i+1}}{(\log(N_1\cdots N_d))^{\frac{1}{2}}}$. By repeatedly applying Lemma~\ref{lem:slice}, for $\vN' = (N_1, \dots, N_t)$, we have
    \begin{equation}\label{eqn: upper disc}
        \disc(\mathcal{A}_\vN) \leq \max\left(\disc(\mathcal{A}_{\vN'}), 4\sqrt{N_{t+1}\log(N_1\cdots N_d)}\right).
    \end{equation}
    By our choice of $t$, we have $4\sqrt{N_{t+1}\log(N_1\cdots N_d)} \leq 4\sqrt{R_t}(\log (N_1\cdots N_d))^{\frac{3}{4}}$.
    
    Also we have $N_t \geq R_{t-1}(\log(N_1\cdots N_d))^{\frac{1}{2}}$, so $N_t \geq R_t(\log(N_1\cdots N_d))^{\frac{t}{2(t+1)}} \geq R_t(\log(N_1\cdots N_d))^{\frac{1}{4}}$. Consequently, we may pick $\delta = O\left(\frac{\log (N_1\cdots N_d)}{\log\log(N_1\cdots N_d)}\right)$ so that $N_t^{t+1-\delta} = R_t^{t+1}$. By Theorem~\ref{thm:almost-cubes} we have
    \[\disc(\mathcal A_{\vN'}) = O_d\left(\sqrt{R_t}\frac{\log (N_1\cdots N_d)}{\log\log (N_1\cdots N_d)}\right).\]
This completes the proof by invoking \eqref{eqn: upper disc}.    
\end{proof}
\begin{remark}
    The above proof gives that we can take $C_d = 2^{O(d^3)}$ in Theorem~\ref{thm:rectangular}.
\end{remark}
	
\section{Concluding remarks }\label{sec: conclusion and open problem}

Theorem~\ref{thm:rectangular} determines $\disc(\CA_\vN)$ up to a constant factor for many $\vN$'s. However, 
even when $d=2$, there is a regime where the upper and lower bounds are not within a constant factor. As a special case, let $\vN = (N, \sqrt{N}(\log N)^k)$ for $k \geq \frac{3}{2}$ and large $N$. Theorem~\ref{thm:rectangular} yields a lower bound of $\Omega(N^{\frac{1}{4}}(\log N)^{\frac{k}{6}})$ and an upper bound of $O(N^{\frac{1}{4}}{(\log N)^{\frac{k}{6}+1}}{(\log\log N)^{-1}})$. If we apply Lemma~\ref{lem:slice} 
and the Matou\v{s}ek-Spencer theorem in one dimension \cite{MS}, we get a weaker upper bound of $O(N^{\frac{1}{4}}(\log N)^{\frac{k+1}{2}})$. In some other regimes, such as when $0 < k<\frac{3}{2}$ in the above example, Lemma~\ref{lem:slice} and \cite{MS} gives a better upper bound than Theorem~\ref{thm:rectangular}, yet it is still not within a constant factor from the lower bound.

% \todo{explain a bit motivation here}

It is interesting to know if the sub-logarithmic factor in the upper bound of Theorem~\ref{thm:rectangular} can be removed or not. We conjecture that it can be and the lower bound is tight. 
\begin{conjecture}
For any integer $d\geq 1$, let $\vN = (N_1, N_2, \cdots, N_d)$ where $N_1,\dots, N_d$ are positive integers. Then
\[\disc(\mathcal{A}_\vN) = \Theta_d\left(\max_{I\subseteq [d]}\left(\prod_{i\in I}N_i\right)^\frac{1}{2|I|+2}\right).\]
\end{conjecture}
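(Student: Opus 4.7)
The lower bound is already established by Theorem~\ref{thm:rectangular-lower}, so the plan is to prove a matching upper bound of order $R := \max_{I\subseteq[d]}\left(\prod_{i\in I}N_i\right)^{\frac{1}{2|I|+2}}$, removing the $\frac{\log(N_1\cdots N_d)}{\log\log(N_1\cdots N_d)}$ factor from the upper bound in Theorem~\ref{thm:rectangular}.

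My strategy would be to bypass the reduction via Lemma~\ref{lem:slice}, which loses a $\sqrt{\log}$ factor by randomly coloring the long directions via Chernoff, and instead apply the partial coloring framework of Section~\ref{sec: first coloring} directly to the full grid $[N_1]\times\cdots\times[N_d]$. Following the template of Proposition~\ref{prop:partial-coloring-almost-cube}, I would split the range of scales $s = 2^t \leq N_1$ into several regimes, choose a function $b(s)$ piecewise via Lemma~\ref{lem:choice-of-b}, and sum the contributions via Lemma~\ref{lem:sum-b} to obtain $\sum_s b(s) = O_d(R)$. Iterating the resulting partial coloring as in Corollary~\ref{cor: almost cube partial color} would then produce a full coloring with discrepancy $O_d(R)$.

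The essential missing ingredient is a strengthening of Lemma~\ref{lem:counting-subsets-3} valid for \emph{all} grid shapes, not only the almost-cube regime $N_1\cdots N_d \leq (\min_i N_i)^{d+1-\delta}$. Concretely, for $N_1\geq\cdots\geq N_d$ and $X\subseteq[N_1]\times\cdots\times[N_d]$ of density $\rho = |X|/(N_1\cdots N_d)$, one would want an estimate of the shape
\[\sum_{\vb\in\Z^d\setminus\{\vzero\}}|U^d(X,\vb,s)| \ \leq\ C_d\,\rho^{c_d}\cdot\frac{|X|\prod_{i : N_i\geq s}N_i}{s^{k(s)}},\]
where $k(s) = |\{i : N_i\geq s\}|$ is an ``effective dimension'' at scale $s$. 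The existing proof by induction on $d$ via the projection $f_\vb$ of Corollary~\ref{cor:projection} works in the almost-cube regime precisely because Claim~\ref{claim:condition-N^*} preserves the hypothesis under projection; for highly anisotropic grids the image $[N_1^*]\times\cdots\times[N_{d-1}^*]$ need not resemble an almost-cube, and the induction breaks down.

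The main obstacle, which is exactly why this remains a conjecture, is carrying out the induction in the anisotropic case. Plausible avenues include: (i) inducting on a parameter that is monotone under projection across all grid shapes (perhaps $R$ itself, together with some auxiliary control on the $N_i^*$), together with a carefully chosen projection direction rather than the one produced by Minkowski--LLL; (ii) a hybrid approach in which the entropy method is applied first in all dimensions simultaneously, with the ``long'' directions contributing only through the structure of $\vb$ rather than through random signs; or (iii) using Fourier-analytic structure at scales where the grid degenerates into a thin slab to directly match the terms in the lower bound. I expect (i) to be the most fruitful but to require a genuinely new geometric input beyond the use of Minkowski's theorem and lattice basis reduction in Section~\ref{sec:geometry}; absent such an input, even the two-dimensional case $\vN=(N,\sqrt{N}(\log N)^k)$ flagged in the concluding remarks appears to be open.
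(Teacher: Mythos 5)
This statement is a conjecture that the paper explicitly leaves open, so there is no ``paper's own proof'' to compare against; the proper standard here is whether your assessment of the problem is accurate, and it is. You correctly observe that the lower bound is exactly Theorem~\ref{thm:rectangular-lower}, that the natural route to a matching upper bound is to run the partial-coloring machinery of Section~\ref{sec: first coloring} directly on the full grid rather than slicing off long directions with Lemma~\ref{lem:slice} (which is where the $\sqrt{\log}$ loss enters), and that the genuinely missing ingredient is an analogue of Lemma~\ref{lem:counting-subsets-3} valid for anisotropic grids. Your diagnosis of \emph{why} the induction breaks is also correct: the projection $f_\vb$ from Corollary~\ref{cor:projection} maps the grid to one with side lengths $N_1^*,\dots,N_{d-1}^*$, and Claim~\ref{claim:condition-N^*} is what preserves the almost-cube hypothesis $N_1\cdots N_d \leq (\min_i N_i)^{d+1-\delta}$ under projection; outside that regime nothing guarantees the image grid inherits a usable shape, and the quantity $R$ you would want to induct on is not obviously monotone under the Minkowski--LLL projection. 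Your proposed estimate with the ``effective dimension'' $k(s) = |\{i : N_i \geq s\}|$ is a sensible target form, consistent with the piecewise bounds in \eqref{eqn:piecewise-up-for-f/m} where the exponent of $s$ already drops from $d+1$ to $2$ once $s$ exceeds $N_1$. In short, you have not proved the conjecture (and do not claim to), but you have accurately located the obstruction --- the same one the paper flags in Section~\ref{sec: conclusion and open problem} via the example $\vN = (N, \sqrt{N}(\log N)^k)$ --- and your candidate avenues (i)--(iii) are reasonable directions, with (i) indeed requiring new geometric input beyond what Section~\ref{sec:geometry} provides.
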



\begin{thebibliography}{}
		\bibitem{AS} N. Alon and J. H. Spencer, {\it The probabilistic method}, 4th ed. Wiley, Hoboken, NJ, 2016.
		\bibitem{Beck}
		J. Beck, Roth's estimate of the discrepancy of integer sequences is nearly sharp, {\it Combinatorica} {\bf 1} (1981), 319-325. 
		
		\bibitem{BC} J. Beck and W. W. L Chen, {\it Irregularities of distribution}, Cambridge Tracts in Mathematics, 89. Cambridge University Press, Cambridge, 1987.
		
		\bibitem{Cassels} J. W. Cassels, {\it An introduction to the geometry of numbers,} corrected reprint of the 1971 edition, Springer-Verlag, Berlin, 1997.
		
		\bibitem{DSW}B. Doerr, A. Srivastav and P. Wehr,  Discrepancy of Cartesian products of arithmetic progressions. {\it Electron. J. Combin.} {\bf 11} (2004), no. 1

        
        \bibitem{LLL} A. K. Lenstra, H. W. Lenstra Jr. and L. Lov\'{a}sz, Factoring polynomials with rational coefficients, {\it Math. Ann.} {\bf 261} (1982), 515-534.
		
		\bibitem{Mat} J. Matou\v{s}ek, {\it Geometric Discrepancy: An Illustrated Guide}, Algorithms Combin. {\bf 18}, Springer, Berlin, 1999.
		
		\bibitem{MS}  J. Matou\v{s}ek and J. Spencer,
		Discrepancy in arithmetic progressions, {\it J. Amer. Math. Soc.} {\bf 9} (1996), 195-204.
		
		\bibitem{Roth} K. F. Roth,  Remark concerning integer sequences, {\it Acta Arith.} {\bf 9} (1964), 257-260.

		
		\bibitem{Valko} B. Valk\'{o}, Discrepancy of arithmetic progressions in higher dimensions, {\it J. Number Theory} {\bf 92} (2002), 117-130.
		

		
	\end{thebibliography}
\end{document}